 \newcommand\dom{\operatorname{dom}}
\DeclareMathOperator{\rad}{rad}
\DeclareMathOperator{\re}{Re}
 \DeclareMathOperator{\Span}{span}
  \DeclareMathOperator{\im}{Im}
  \DeclareMathOperator{\ran}{Ran}
  \DeclareMathOperator{\logg}{Log}
   \renewcommand{\log}{\textrm{Log}}
\newtheorem{theorem}{Theorem}[section]
 \newtheorem{lemma}[theorem]{Lemma}
 \newtheorem{proposition}[theorem]{Proposition}
 \theoremstyle{definition}
 \newtheorem{definition}[theorem]{Definition}
 \theoremstyle{remark}
 \newtheorem{remark}[theorem]{Remark}
 \numberwithin{equation}{section}
\begin{document}

\title[NLS-log equation with $\delta$-interaction]{Stability of standing waves for NLS-log  equation with $\delta$-interaction}

\author{Jaime Angulo Pava}

\address{%
 Rua do Mat\~{a}o 1010\\
CEP 05508-090\\
 S\~{a}o Paulo, SP\\
Brazil}

\email{angulo@ime.usp.br}

\thanks{J. Angulo was supported partially
by Grant CNPq/Brazil, N. Goloshchapova was supported by Fapesp under the projects 2012/50503-6 and 2016/02060-9}
\author{Nataliia Goloshchapova}
\address{%
 Rua do Mat\~{a}o 1010\\
CEP 05508-090\\
 S\~{a}o Paulo, SP\\
Brazil}
\email{nataliia@ime.usp.br}
\subjclass{Primary
35Q51, 35J61; Secondary 47E05.}

\keywords{Nonlinear Schr\"odinger equation, orbital stability, standing wave,  $\delta$-interaction, self-adjoint extension, deficiency indices, semi-bounded operator}

\date{February 11, 2017}

\begin{abstract}
 We study analytically  the orbital stability of the standing waves with a peak-Gausson profile for a nonlinear logarithmic Schr\"o\-dinger  equation with $\delta$-interaction (attractive and repulsive). A major difficulty is to compute the number of negative eigenvalues of the linearized operator around the standing wave. This is
overcome by the perturbation method, the continuation arguments, and the theory of extensions of symmetric  operators.
 \end{abstract}
 
 \maketitle
 
 \section{Introduction}
In 1976 Bialynicki-Birula and Mycielski   \cite{BM} built a model of nonlinear wave mechanics based on the following Schr\"odinger equation with a logarithmic non-linearity  (NLS-log equation  henceforth)
 \begin{equation}\label{NLSL}
i\partial_t u+\Delta u + u\log|u|^2=0,
\end{equation}
 where $u=u(t, x):\, \mathbb{R}\times\mathbb{R}^n\rightarrow \mathbb{C}$, $n\geq 1$. This equation has been proposed in order to obtain a nonlinear equation which helped to quantify departures from the strictly linear regime, preserving in any number of dimensions some fundamental aspects of quantum mechanics, such as separability and additivity of total energy of noninteracting subsystems. The NLS-log equation admits applications to dissipative systems \cite{HR}, quantum mechanics, quantum optics \cite{BSS}, nuclear physics \cite{H}, transport and diffusion phenomena (for example, magma transport) \cite{DFGL},
open quantum systems, effective quantum gravity, theory of superfluidity, and Bose-Einstein condensation (see \cite{H, Zlo10}  and the references therein). 
We refer to \cite{Caz83, Caz80} for a study of existence and uniqueness of the solutions
to the associated Cauchy problem in a suitable functional framework, as well
as for a study of the asymptotic behavior of its solutions and their orbital stability.

In  this paper we study  the following nonlinear logarithmic Schr\"odinger equation with $\delta$-interaction (NLS-log-$\delta$ henceforth) on the line
\begin{equation}\label{NLS_delta}
i\partial_t u-\mathcal H_\gamma^\delta u +u\log|u|^2=0.
\end{equation}
 Here $u=u(t, x):\,\mathbb{R}\times \mathbb{R}\rightarrow \mathbb{C}$,  $\gamma\in\mathbb{R}\setminus\{0\}$,  and  $\mathcal H_\gamma^\delta$ is the self-adjoint operator on $L^2(\mathbb{R})$ defined by
\begin{equation}\label{H_delta}
\begin{split}
&\mathcal H_\gamma^\delta =-\frac{d^2}{dx^2},\\
&\dom(\mathcal H_\gamma^\delta)=\{f\in H^1(\mathbb R)\cap H^2(\mathbb R\setminus\{0\}): f'(0+)-f'(0-)=-\gamma f(0)\}.
\end{split}
\end{equation}

The operator  $\mathcal H_\gamma^\delta$  corresponds to the formal  expression  $l_\gamma^\delta=-\frac{d^2}{dx^2}-\gamma \delta$ (see \cite{AlbGes05}  for details). Equation \eqref{NLS_delta} can be viewed as a model of a  singular interaction between nonlinear wave and an inhomogeneity. The delta potential can be used to model an impurity, or defect, localized at the origin.  Formally the  NLS-log-$\delta$ model can be described by  the following  problem 
 \begin{equation*}
 \left \{ 
\begin{split}
&i\partial_{t}u(t, x)+ \partial^2_{x}u(t, x)= -u(t, x)\log |u(t, x)|^2,\quad x\neq0, \;t\in\mathbb R,\\
&\lim_{x\to 0^+}[u(t, x)-u(t, -x)]=0, \\
&\lim_{x\to 0^+}[\partial_x u(t, x)-\partial_x u(t, -x)]=-\gamma u(t, 0),\\
&\lim_{x\to \pm \infty} u(t, x)=0.
\end{split}\right.
\end{equation*}
A similar formal model  with power nonlinearity has been introduced in \cite{CMR}. 

Our aim  is  to investigate an orbital stability of standing wave solutions  $u(t, x)=e^{i\omega t}\varphi_{\omega,\gamma}$ for  equation \eqref{NLS_delta} with  {\it peak-Gausson} profile
\begin{equation}\label{gausson}
\varphi_{\omega,\gamma}(x)=e^{\frac{\omega+1}{2}} e^{-\frac{1}{2} (|x|+\frac{\gamma}{2})^2}.
 \end{equation} 
The main stability  result of this paper is the following.  
\begin{theorem}\label{main}  Let  $\gamma\neq 0$ and $\varphi_{\omega,\gamma}$ be defined by   \eqref{gausson}. Let also $\widetilde W$ be defined by \eqref{tilde_W}.
Then the  following  assertions hold.
\begin{itemize} 
\item[$(i)$] If $\gamma>0$, then the standing wave $e^{i\omega t}\varphi_{\omega,\gamma}$  is orbitally  stable in $\widetilde{W}$.
\item[$(ii)$] If $\gamma<0$, then the standing wave $e^{i\omega t}\varphi_{\omega,\gamma}$  is orbitally  unstable in $\widetilde{W}$.
\item[$(iii)$]  The standing wave $e^{i\omega t}\varphi_{\omega,\gamma}$  is orbitally  stable in $\widetilde{W}_{\rad}$.
\end{itemize}
\end{theorem}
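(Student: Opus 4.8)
The standard route to orbital stability of standing waves for NLS-type equations is the Grillakis–Shatah–Strauss (GSS) framework, adapted to the logarithmic nonlinearity and the $\delta$-interaction. I would proceed as follows.

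First, set up the variational/Hamiltonian structure. Equation \eqref{NLS_delta} conserves the charge $Q(u)=\tfrac12\|u\|_{L^2}^2$ and the energy $E(u)=\tfrac12\langle \mathcal H_\gamma^\delta u,u\rangle - \tfrac12\int |u|^2\log|u|^2\,dx + \tfrac12\|u\|_{L^2}^2$ (or the appropriate normalization so that $E'(\varphi)+\omega Q'(\varphi)=0$). The natural energy space here is $\widetilde W$ — presumably the subspace of $H^1(\mathbb R)$ on which $\int|u|^2\,\big|\log|u|^2\big|\,dx<\infty$, equipped with a Luxemburg-type norm to handle the logarithm's failure to be $C^1$ on $H^1$; this is the space introduced by Cazenave. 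I would first record well-posedness of \eqref{NLS_delta} in $\widetilde W$ (adapting Cazenave's argument, which the paper must have done earlier since $\widetilde W$ is cited), and verify that $\varphi_{\omega,\gamma}$ is a critical point of the action $S_\omega=E+\omega Q$ on $\widetilde W$, with the profile \eqref{gausson} arising from the ODE $-\varphi''=\omega\varphi+\varphi\log\varphi^2$ on $x\neq 0$ together with the jump condition $\varphi'(0+)-\varphi'(0-)=-\gamma\varphi(0)$.

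Next, the spectral analysis of the linearized operator. Write $S_\omega''(\varphi_{\omega,\gamma})=\mathrm{diag}(L_{+},L_{-})$ acting on real and imaginary parts, where $L_{+}=\mathcal H_\gamma^\delta+\omega+1-(1+\log\varphi^2) = \mathcal H_\gamma^\delta + (|x|+\tfrac{\gamma}{2})^2 - 1$ (with $\delta$-boundary condition in the domain) and $L_{-}=\mathcal H_\gamma^\delta+\omega-\log\varphi^2 = \mathcal H_\gamma^\delta+(|x|+\tfrac{\gamma}{2})^2$. One checks $L_{-}\geq 0$ with kernel spanned by $\varphi_{\omega,\gamma}$ (sign-definite ground state). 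The GSS conditions then reduce to (a) determining $n(L_{+})$, the number of negative eigenvalues of $L_{+}$, and (b) computing the sign of $\tfrac{d}{d\omega}Q(\varphi_{\omega,\gamma})$, i.e. the slope of $\omega\mapsto \tfrac12\|\varphi_{\omega,\gamma}\|_{L^2}^2 = \tfrac12 e^{\omega+1}\int e^{-(|x|+\gamma/2)^2}dx$, which is manifestly positive and $\gamma$-independent in sign. Since $L_+\varphi' \neq 0$ in the usual way is obstructed by the interface, $\varphi'$ is not in the domain, so one cannot simply say $\varphi'$ is a zero mode; instead one uses $L_+(\partial_\omega\varphi_{\omega,\gamma})=-\varphi_{\omega,\gamma}$ and a related argument. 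The stability dichotomy is: if $n(L_+)=1$ then stable; if $n(L_+)=2$ then unstable (since the charge slope is positive). The radial subspace $\widetilde W_{\rad}$ kills the spatial derivative direction, so on it $n(L_+|_{\rad})=1$ always, giving (iii) unconditionally.

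**Main obstacle.** The hard part is exactly what the abstract flags: computing $n(L_{+})$ for the operator $-\tfrac{d^2}{dx^2}+(|x|+\tfrac{\gamma}{2})^2-1$ with the $\delta$ jump condition $f'(0+)-f'(0-)=-\gamma f(0)$. Without the interaction ($\gamma=0$) this is a shifted harmonic oscillator with exactly one negative eigenvalue. Turning on $\gamma$ deforms both the potential and the domain, and one must track whether a second eigenvalue crosses $0$. I would follow the paper's stated strategy: realize $L_+$ as a self-adjoint extension of the symmetric operator $-\tfrac{d^2}{dx^2}+(|x|+\tfrac\gamma2)^2-1$ restricted to functions vanishing at $0$, whose deficiency indices are $(2,2)$; parametrize the relevant one-parameter family of extensions by $\gamma$; use a perturbation/continuation argument in $\gamma$ starting from a reference value where $n(L_+)$ is known, controlling eigenvalue crossings through $0$ by monitoring whether $0$ is an eigenvalue — equivalently, whether the explicit solution of $L_+ f=0$ on $\mathbb R\setminus\{0\}$ (expressible via parabolic cylinder functions) can be matched across the interface with the jump condition. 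The outcome is that for $\gamma>0$ one gets $n(L_+)=1$ (stable) and for $\gamma<0$ one gets $n(L_+)=2$ (unstable); the change happens because the attractive/repulsive character of the defect pushes the relevant eigenvalue across zero. A subsidiary technical point is that GSS is not directly applicable in $\widetilde W$ because of the logarithm, so one must invoke the modified stability argument for NLS-log (à la Cazenave–Lions / the convexity of $d(\omega)$ combined with a suitable lower semicontinuity), which the paper presumably sets up; I would cite that and plug in the spectral count.
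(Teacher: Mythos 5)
Your overall frame---GSS theory, the splitting of $S''$ into two self-adjoint operators acting on real and imaginary parts, positivity of $\partial_\omega\|\varphi_{\omega,\gamma}\|_2^2$, and the dichotomy ``$n=1$ stable, $n=2$ unstable''---is indeed the paper's strategy. The genuine gap is that the decisive step is only asserted: you say the attractive/repulsive defect ``pushes the relevant eigenvalue across zero,'' but you give no argument for \emph{which way} it moves, and that is precisely the difficulty the whole paper is organized around. In the paper, for $\gamma<0$ one shows $(\mathcal L^\gamma_1)_\gamma$ is a real-analytic family of type (B), applies Kato--Rellich at $\gamma=0$ (where the shifted harmonic oscillator has second eigenvalue $0$ with odd eigenfunction $\varphi'_{\omega,0}$), and computes the first-order coefficient $\Pi'(0)=\int_{\mathbb R}|x|\,|\varphi'_{\omega,0}|^2dx\,/\,\|\varphi'_{\omega,0}\|_2^2>0$; hence the second eigenvalue is negative for small $\gamma<0$ and positive for small $\gamma>0$. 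A Riesz-projection continuation, combined with the separate proof that $\ker(\mathcal L^\gamma_1)=\{0\}$ for every $\gamma\neq0$ (done by an elementary matching of $\pm\mu\varphi'_{\omega,\gamma}$ against the jump condition, no parabolic cylinder analysis needed), then propagates $n(\mathcal L^\gamma_1)=2$ to all $\gamma<0$. For $\gamma>0$ the paper does not continue in $\gamma$ at all: it restricts $\mathcal L^\gamma_1$ to $\{f(0)=0\}$, shows this symmetric operator has deficiency indices $(1,1)$ (not $(2,2)$ as you state---only the value at $0$ is removed, the derivative remains continuous) and is nonnegative via the Jacobi-type factorization through $\varphi'_{\omega,\gamma}$, which is nonvanishing for $x\neq0$ exactly when $\gamma>0$; hence any self-adjoint extension has at most one negative eigenvalue, and $\mathcal L^\gamma_1\varphi_{\omega,\gamma}=-2\varphi_{\omega,\gamma}$ gives exactly one. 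Without either the perturbation coefficient or such an extension-theoretic bound, your proposal never actually establishes $n=1$ versus $n=2$.

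The second gap is the functional setting. In the paper $\widetilde W$ is the weighted space $\{f\in H^1(\mathbb R):xf\in L^2(\mathbb R)\}$, not Cazenave's logarithmic space with a Luxemburg norm; one proves $\widetilde W\subset W(\mathbb R)$, and the whole point of this choice is that $S_{\omega,\gamma}=E+(\omega+1)Q$ \emph{is} twice continuously differentiable at $\varphi_{\omega,\gamma}$ on $\widetilde W$, so the GSS stability and instability theorems apply directly once well-posedness in $\widetilde W$ is proved (by regularizing the nonlinearity). Your proposed fallback via Cazenave--Lions/convexity of $d(\omega)$ is both unnecessary and insufficient: a constrained-minimization argument cannot produce the instability statement (ii), and the paper explicitly notes the variational approach is inapplicable for $\gamma<0$. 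Minor further points: your constants are off (the correct operators are $-\tfrac{d^2}{dx^2}+(|x|+\tfrac\gamma2)^2-3$ and $-\tfrac{d^2}{dx^2}+(|x|+\tfrac\gamma2)^2-1$, with $\mathcal L^\gamma_1=\mathcal L^\gamma_2-2$ and $\mathcal L^\gamma_1\varphi_{\omega,\gamma}=-2\varphi_{\omega,\gamma}$, while your $L_-$ as written does not annihilate $\varphi_{\omega,\gamma}$), and for (iii) ``killing the derivative direction'' must be replaced by a parity/Sturm argument showing the second eigenfunction is odd, which is how the paper gets $n=1$ on $\widetilde W_{\rad}$ for $\gamma<0$.
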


The proof of  Theorem \ref{main} is based on the approach  established by  Grillakis, Shatah and Strauss in  \cite{GrilSha87,GrilSha90}. We prove the  well-posedness  of the Cauchy problem for NLS-log-$\delta$ equation on $\widetilde{W}$ in Section 3. For this purpose  we use the idea of the proof of  \cite[Theorem 9.3.4]{Caz03}. Namely, we  approximate the logarithmic nonlinearity by a Lipschitz continuous nonlinearities,  construct a sequence of global solutions of the regularized Cauchy problem in $C(\mathbb{R}, H^1(\mathbb{R}))$,  then we pass to the limit using standard compactness results, and finally we extract a subsequence which converges to the solution of  limiting equation \eqref{NLS_delta}. Section 4 is devoted to the proof of Theorem \ref{main}. We emphasize   that our stability approach does not use variational methods which are  standard in the study of  the stability of  standing waves for the  NLS with point defects (see \cite{AdaNoj13, AdaNoj13a, FJ, FO, ghw}). In Subsection \ref{sub3.1} we linearize NLS-log-$\delta$ equation around the peak-Gausson profile $\varphi_{\omega,\gamma}$ via the key functional $S_{\omega, \gamma}=E+(\omega+1) Q$.  As a result we  obtain two self-adjoint Schr\"odinger operators of harmonic oscillator type 
\begin{gather*}
 \mathcal L^\gamma_{1}=-\frac{d^2}{dx^2}+ \Big(|x|+\frac{\gamma}{2}\Big)^2-3,\quad  
\mathcal L^\gamma_{2}=-\frac{d^2}{dx^2}+ \Big(|x|+\frac{\gamma}{2}\Big)^2-1.
\end{gather*}
 Stability study requires investigation of the certain spectral properties of  $\mathcal L^\gamma_1$ and $\mathcal L^\gamma_2$ on the domain 
 $$
 \dom(\mathcal L^\gamma_{j})=\big\{f\in \dom(\mathcal H^\delta_\gamma): x^2f\in L^2(\mathbb R)\big\},\quad j\in\{1,2\}.
 $$
 The main difficulty is to count the number of negative eigenvalues of $\mathcal L^\gamma_{1}$. We propose two specific approaches to do this. For $\gamma>0$ we give a novel approach based on   the theory of extensions of symmetric operators of Krein-von Neumann. For   $\gamma<0$  we use  the analytic perturbation theory and the classical continuation argument based on the Riesz-projection. At the end of the Subsection \ref{sub3.3} we give the   proof of Theorem \ref{main}.

Let us also  mention that the extension theory was applied in   \cite{AN1} to investigate  the stability of standing waves of the NLS equation with $\delta'$-interaction on a star graph  $\mathcal{G}$  (see also \cite{AdaNoj15, AdaNoj14, BK})
 \begin{equation*}
i\partial_t \mathbf{U}-\mathbf{H}_\lambda^{\delta'} \mathbf{U} +|\mathbf{U}|^{p-1}\mathbf{U}=0,
\end{equation*}
  where $\mathbf{H}_\lambda^{\delta'}$ is the self-adjoint operator on $L^2(\mathcal{G})$ defined  for $\lambda\in\mathbb{R}\setminus\{0\}$ by
\begin{equation*}
\begin{split}
&(\mathbf{H}_\lambda^{\delta'} \mathbf{U})(x)=\left(-u_j''(x)\right)_{j=1}^N,\quad x\neq 0,\\
&\dom(\mathbf{H}_\lambda^{\delta'})=
\left\{\begin{array}{c}\mathbf{U}=(u_j)_{j=1}^N\in H^2(\mathcal{G}): u'_1(0)=...=u'_N(0),\\ \sum\limits_{j=1}^N  u_j(0)=\lambda u'_1(0)\end{array}\right\}.
\end{split}
\end{equation*}
 
\noindent{\bf Notation.} 
We denote by $L^2(\mathbb{R})$ the  Hilbert space equipped with the inner product $(u,v):=\re{\int\limits_{\mathbb{R}}u(x)\overline{v(x)}dx}$. Its norm is denoted by $||\cdot||_2$.
 By $H^1(\mathbb{R})$, $H^2(\mathbb{R}\setminus \{0\})=H^2(\mathbb{R}_-)\oplus H^2(\mathbb{R}_+)$ 
  we denote  Sobolev spaces.
  We denote by $\widetilde{W}$ and $\widetilde{W}_{\rad}$  the weighted Hilbert spaces $\big\{f\in H^1(\mathbb{R}): xf\in L^2(\mathbb{R})\big\}$  and $\big\{f\in H^1(\mathbb{R}): xf\in L^2(\mathbb{R}), f(x)=f(-x)\big\}$ respectively. 
  

Let $\mathcal A$ be a  densely defined symmetric operator in a Hilbert space $\mathfrak{H}$. The deficiency numbers of $\mathcal A$ are denoted by  $n_\pm(\mathcal A):=\dim\ker(\mathcal A^*\mp i\mathcal I)$, where $\mathcal I$ is the identity operator. The number of negative eigenvalues  is denoted by  $n(\mathcal A)$ (counting multiplicities). The spectrum (resp. point spectrum) and the resolvent set  of $\mathcal A$ are denoted by $\sigma(\mathcal A)$ (resp. $\sigma_p(\mathcal A)$) and $\rho(\mathcal A)$. 

The space dual to $\mathfrak{H}$ is denoted by $\mathfrak{H}'$, and $B(\widetilde W, \widetilde W')$ denotes the space of bounded operators from $\widetilde W$ to $\widetilde W'$.

\section{Previous results and basic notions}
For completeness of the exposition, below we will discuss key results on the standing waves of NLS-log equation.
First, let us give a definition of the orbital stability. The basic symmetry associated to equation \eqref{NLS_delta} is the phase-invariance (while the translation invariance  does not hold due to the defect). Thus,  the definition  of stability  takes into account only  this  type of symmetry and is formulated as follows.
\begin{definition}\label{dsta}  Let  $X$ be a Hilbert space. For $\eta>0$  let
$$
U_\eta(\varphi_{\omega,\gamma})=\Big\{v\in X : \inf_{\theta\in\mathbb R}\|v-e^{i\theta}\varphi_{\omega,\gamma}\|_{X}<\eta\Big\}.
$$
The standing wave $e^{i\omega t}\varphi_{\omega,\gamma}$ is \textit{(orbitally) stable} in $X$ if for any $\epsilon>0$ there exists $\eta>0$ such that for any $u_0\in U_\eta(\varphi_{\omega,\gamma})$, the solution $u(t)$ of \eqref{NLS_delta} with $u(0)=u_0$ satisfies $u(t)\in U_\epsilon(\varphi_{\omega,\gamma})$ for all $t\in \mathbb R$. Otherwise, $e^{i\omega t}\varphi_{\omega,\gamma}$ is said to be \textit{(orbitally) unstable} in $X$.
\end{definition}

It is interesting to note that  NLS-log equation \eqref{NLSL} possesses
standing-wave solutions  $u(t, x)=e^{i\omega t} \varphi_\omega(x)$ of the Gaussian shape 
$$
\varphi_\omega(x)=e^{\frac{\omega+n}{2}} e^{-\frac{1}{2} |x|^2}$$ for any
dimension $n$ and any frequency $\omega$ (see \cite{BM}). The orbital stability properties of the Gaussian profile $\varphi_\omega$ in the relevant class
\begin{equation}\label{W}
W(\mathbb{R}^n)=\{f\in H^1({\mathbb{R}^n}): |f|^2 \log|f|^2\in L^1({\mathbb R}^n)\}
\end{equation}  
have been studied in \cite{Caz83}. Cazenave showed that standing waves with  Gaussian profile 
 are  stable in $W(\mathbb{R}^n)$ under radial perturbations for $n\geq 2$.
The proof of this result is based on the fact that the space $H_{\rad}^1({\mathbb R^n})$ is compactly embedded  into $L^2({\mathbb R^n})$ for $n\geq 2$. Later  Cazenave and Lions in \cite[Remark II.3]{CL}  showed that such standing waves are  orbitallly stable on all $W(\mathbb{R}^n)$ for $n\geq 1$.

 We also remark  that  Angulo and Hernandez in \cite{AH} showed  (via the  variational approach)  the orbital stability of the ground states $\varphi_{\omega, \gamma}$   in the space $W(\mathbb{R})$ in the case of attractive $\delta$-interaction ($\gamma>0$).  In should be noted  that investigation of the orbital stability of $\varphi_{\omega, \gamma}$ in the case of repulsive $\delta$-interaction ($\gamma<0$) via constrained minimizer for the action or  the energy functional is not applicable  (see \cite[Remark 4.5]{AH}).
 
Recently   has been considered   NLS-log equation with an external potential $V$ satisfying specific conditions
 \begin{equation*}\label{NLSLP}
i\partial_t u+\Delta u -V(x)u+ u\log|u|^2=0.
\end{equation*}
   From the result of Ji and Szulkin in \cite{JS} it follows that there exist infinitely many profiles of standing wave  $u(x,t)=e^{i\omega t} \varphi_\omega$ (see also  \cite{SS}) for coercive $V$. Namely,  the elliptic equation
\begin{equation}\label{logpo}
-\Delta \varphi_\omega +(V(x)+\omega)\varphi_\omega=\varphi_\omega \log |\varphi_\omega|^2
\end{equation}
has infinitely many  solutions for $V\in C(\mathbb R^n, \mathbb R)$ such that  $\lim\limits_{|x|\to \infty} V(x)=+\infty$. Moreover they showed the existence of  a ground state solution (a nontrivial positive solution with least possible energy) for   bounded potential such that  $\omega+1+ V_{\infty}>0$,  in which   $V_{\infty}:=\lim\limits_{|x|\to \infty} V(x)=\sup\limits_{x\in \mathbb R^n} V(x)$,  and  $\sigma(-\Delta +V(x)+\omega+1)\subset (0, +\infty)$.
 For $V\equiv 0$ and $n\geq 3$, the authors  in \cite{DMS} showed the existence of infinitely many weak solutions to \eqref{logpo}. Also they showed that the Gaussian profile $\varphi_{-n}$ is nondegenerated, that is $\ker(L) = \Span\{\partial_{x_i} \varphi_{-n}: i=1, 2,..,N\}$, where $Lu = -\Delta u+(|x|^2-n -2)u$ is the linearized operator for $-\Delta u-n u = u\log|u|^2$ at $\varphi_{-n}$.


 The main
advantage of using the delta potential $V(x)=-\gamma\delta(x)$
is the existence of  explicit expression \eqref{gausson} for the profile $\varphi_{\omega,\gamma}$ (see Figures 1 (a) and (b)) satisfying the equation
\begin{equation}\label{eq2}
\mathcal H_\gamma^\delta\varphi+\omega\varphi-\varphi\log|\varphi|^2=0.
\end{equation}
\vskip 1cm
 \begin{tikzpicture}[scale=0.5]
\begin{axis}[
  axis x line=middle, axis y line=middle,
  ymin=0, ymax=1, xtick={0},
  xmin=-10, xmax=10, ytick={0}
]
\addplot[black,domain=-10:-2,smooth,thick] {exp{-(abs(x)+1)^2}/5};
\addplot[black,domain=2:10,smooth,thick] {exp{-(abs(x)+1)^2}/5};
\addplot[black,domain=-2:2,thick] {exp{-(abs(x)+1)^2}/5};
\end{axis}
\node[label={[xshift=2cm, yshift=-1.5cm] Figure 1(a) \text{:}\, $\varphi_{\omega, \gamma}(x)$ for
$\gamma>0$}]{};
\end{tikzpicture}
\hspace{0.7cm}
\begin{tikzpicture}[scale=0.5]
\begin{axis}[
  axis x line=middle, axis y line=middle,
  ymin=0, ymax=2, xtick={0},
  xmin=-20, xmax=20, ytick={0}
]
\addplot[black,domain=-20:-2,smooth,thick] {exp{-(abs(x/3)-1)^2}/5};
\addplot[black,domain=2:20,smooth,thick] {exp{-(abs(x/3)-1)^2}/5};
\addplot[black,domain=-2:2,thick] {exp{-(abs(x/3)-1)^2}/5};
\end{axis}
\node[label={[xshift=2cm, yshift=-1.5cm] Figure 1(b)\text{:}\, $\varphi_{\omega, \gamma}(x)$ for
$\gamma<0$}]{};
\end{tikzpicture} 
 This peak-Gausson profile is constructed from the known solution of \eqref{eq2} in the case  $\gamma=0$ on each side of the defect pasted together at $x=0$ to satisfy the continuity and the jump condition  $\varphi'(0+)-\varphi'(0-)=-\gamma\varphi(0)$ at $x=0$. 
Moreover, the following result holds. 
\begin{theorem}\label{uniq}
The set of all solutions to \eqref{eq2} is given by  $\{e^{i\theta}\varphi_{\omega,\gamma}:\,\theta\in\mathbb{R}\}$.
\end{theorem}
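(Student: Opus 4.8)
The plan is to reduce an arbitrary (nonzero) solution $\varphi$ of \eqref{eq2} to a real, strictly positive function of constant phase, then to integrate the resulting scalar ODE explicitly on each half-line, and finally to glue the two half-line profiles through the jump condition in \eqref{H_delta} --- this last step being where the hypothesis $\gamma\neq0$ is used decisively. Note that, by definition of $\dom(\mathcal H_\gamma^\delta)$, any solution lies in $H^1(\mathbb R)\cap H^2(\mathbb R\setminus\{0\})$, so $\varphi$ and $\varphi'$ are continuous, tend to $0$ at $\pm\infty$, and satisfy $-\varphi''+\omega\varphi=\varphi\log|\varphi|^2$ classically on $\mathbb R\setminus\{0\}$ (the right-hand side being continuous even at zeros of $\varphi$, since $t\mapsto t\log t^2\to0$ at $0$). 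We may clearly assume $\varphi\not\equiv0$.

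First I would establish phase constancy and non-vanishing; this is the step I expect to be the main obstacle, because the coefficient $\log|\varphi|^2$ is singular at zeros of $\varphi$ and the usual ODE uniqueness is not directly available. Multiplying \eqref{eq2} by $\overline\varphi$ and taking imaginary parts gives $\frac{d}{dx}\im(\varphi'\overline\varphi)=0$ on $\mathbb R\setminus\{0\}$; since $\varphi,\varphi'\to0$ at $\pm\infty$, this forces $\im(\varphi'\overline\varphi)\equiv0$, so on the open set $\{\varphi\neq0\}$, writing $\varphi=\rho e^{i\sigma}$, one gets $\rho^2\sigma'=0$: the phase is constant on each component of $\{\varphi\neq0\}$, and there $|\varphi'|=|\rho'|$. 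Multiplying \eqref{eq2} instead by $\overline{\varphi'}$ and taking real parts yields, on each half-line, the first integral
\[
|\varphi'|^2=|\varphi|^2\big[(\omega+1)-\log|\varphi|^2\big]+2E_\pm\qquad(\pm x>0),
\]
and letting $x\to\pm\infty$ gives $E_\pm=0$; in particular $\varphi$ and $\varphi'$ vanish simultaneously. On a component of $\{\varphi\neq0\}$, set $h:=\big((\omega+1)-\log\rho^2\big)^{1/2}$; combining the first integral with $|\varphi'|=|\rho'|$ one computes $|h'|\equiv1$ there, so $h$ is Lipschitz, whereas $h\to+\infty$ whenever $\rho\to0$. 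A Lipschitz function cannot blow up at a finite point, so no component of $\{\varphi\neq0\}$ can have a finite endpoint at which $\rho=0$, the origin included; hence for $\varphi\not\equiv0$ one has $\rho>0$ on all of $\mathbb R$, the phase $\sigma$ is globally constant by continuity at $0$, and after a phase rotation we may take $\varphi=\rho>0$.

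Finally I would read off the profile. Since $|h'|\equiv1$ on each of $(0,\infty)$ and $(-\infty,0)$, $h$ is affine there, and exponentiating gives $\rho(x)=e^{\frac{\omega+1}{2}}e^{-\frac12(x-b_\pm)^2}$ for $\pm x>0$, with constants $b_+,b_-$ (a direct check confirms these solve $-\rho''+\omega\rho=\rho\log\rho^2$). Continuity of $\rho$ at $0$ forces $b_+^2=b_-^2$; computing $\rho'(0\pm)=b_\pm\rho(0)$ and invoking the jump condition $\rho'(0+)-\rho'(0-)=-\gamma\rho(0)$ with $\rho(0)\neq0$ gives $b_+-b_-=-\gamma$. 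Because $\gamma\neq0$, the alternative $b_+=b_-$ is excluded, so $b_-=-b_+$ and $b_+=-\gamma/2$, $b_-=\gamma/2$; since $x\mp\gamma/2=\pm(|x|+\gamma/2)$ for $\pm x>0$, this is exactly $\rho=\varphi_{\omega,\gamma}$ from \eqref{gausson}. Undoing the phase reduction shows that every nonzero solution of \eqref{eq2} equals $e^{i\theta}\varphi_{\omega,\gamma}$ for some $\theta\in\mathbb R$, while the reverse inclusion is immediate from the phase-equivariance of $u\mapsto u\log|u|^2$; together these would give Theorem \ref{uniq}.
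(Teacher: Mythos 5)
Your proof is correct and follows the same overall skeleton as the paper's Appendix argument: reduce to constant phase, integrate the scalar ODE on each half-line to get a shifted Gaussian $e^{\frac{\omega+1}{2}}e^{-\frac12(x-b_\pm)^2}$, and then pin down $b_\pm$ and the phases from continuity at $0$ together with the jump condition, using $\gamma\neq 0$ to exclude $b_+=b_-$. The genuine difference is how non-vanishing of the profile is obtained. The paper first passes to a real nonnegative $\rho$ and invokes V\'azquez's strong maximum principle (\cite[Theorem 1]{Vaz84}) to get $\rho>0$, and only then writes the first integral; you instead derive the first integral with zero constant from the decay at infinity and prove positivity yourself via the substitution $h=\sqrt{(\omega+1)-\log\rho^2}$ with $|h'|\equiv 1$, so that $h$ cannot blow up at a finite endpoint of a component of $\{\varphi\neq 0\}$. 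This buys a self-contained, elementary replacement for the external maximum-principle citation, and your use of the conserved quantity $\im(\varphi'\overline{\varphi})$ is cleaner than the paper's global polar decomposition $\varphi=e^{i\theta(x)}\rho(x)$, which tacitly presumes $\varphi$ does not vanish. Two small points you should tighten, neither fatal: (a) the identity $|h'|=1$ is computed where $h\neq 0$; at a zero of $h$ one has $\rho=e^{(\omega+1)/2}$, $\rho'=0$ and the equation gives $\rho''=-\rho<0$, so such points are isolated strict maxima and $h$ is still $1$-Lipschitz (or simply run the argument near the suspect endpoint, where $h$ is large); (b) asserting the first integral ``on each half-line'' with a single constant silently crosses possible zeros of $\varphi$, where the chain rule for $|\varphi|^2\log|\varphi|^2$ needs a word --- the cheapest fix is to apply the first integral and the $h$-argument only on the unbounded component of $\{\varphi\neq0\}$ in each half-line, which already shows that component is the whole half-line and that $\varphi(0)\neq 0$.
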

\noindent The proof of this theorem can be found in Appendix.

 As it was announced in Theorem \ref{main}, our stability analysis is elaborated in the specific space $\widetilde W$. To explain the choice of this space let us introduce the following two  basic functionals  associated with equation \eqref{NLS_delta}:
\begin{itemize}
\item``charge" functional $$
Q(u)=\frac{1}{2}\int_{\mathbb R} |u(x)|^2 dx,$$ 
\item``energy" functional
\begin{equation}\label{energy}
E(u)=\tfrac{1}{2}||\partial_x u||_2^2-\frac{1}{2}\int_{\mathbb R} |u(x)|^2\log|u(x)|^2 dx-\frac{\gamma}{2} |u(0)|^2.
\end{equation}
\end{itemize}
These functionals are continuosly differentiable in $W(\mathbb{R})$ defined  by \eqref{W} (see \cite{Caz83}).
 Moreover, at least formally,  $E$ is conserved by the flow of \eqref{NLS_delta}. The use  of the space $
W(\mathbb R)$ is mainly due to the fact that the  functional $E$  fails to be continuosly differentiable on  $H^1(\mathbb R)$ (see \cite{Caz83}).  As we use the approch by Grillakis, Shatah and Strauss \cite{GrilSha87,GrilSha90}, the functional $E$ needs to be twice continuously differentiable at the  $\varphi_{\omega, \gamma}$. To satisfy this condition   we propose   the  ``weighted space''  (i.e., $X$ coincides with $\widetilde{W}$ in Definition \ref{dsta})
\begin{equation}\label{tilde_W}
\widetilde{W}=H^1(\mathbb R)\cap L^2(x^2dx)=\big\{f\in H^1(\mathbb{R}): xf\in L^2(\mathbb{R})\big\}.
\end{equation}
In particular, the space $\widetilde{W}$ naturally appears in definition of the  linearization of the second derivative of $S_{\omega,\gamma}=E+(\omega+1) Q$ at $\varphi_{\omega, \gamma}$.
Note that, due to the inclusion  $\widetilde{W}\subset W(\mathbb R)$  (see Lemma \ref{incl} below), the functional $E$ is  continuously differentiable on $\widetilde{W}$.
\section{The Cauchy problem in $\widetilde{W}$}
In this section we prove the well-posedness of the Cauchy problem for \eqref{NLS_delta} in the space $\widetilde{W}$. The idea of the proof is an adaptation of the proof of \cite[Theorem 9.3.4]{Caz03}. 
The following lemma implies that $Q$ and $E$ are well-defined on $\widetilde{W}$. 
\begin{lemma}\label{incl}
Let   $W(\mathbb{R})$ and  $\widetilde W$ be the Banach spaces  defined by 
\begin{equation*}
\begin{split}
&W(\mathbb{R})=\{f\in H^1(\mathbb{R}): |f|^2 \logg|f|^2\in L^1(\mathbb R)\}, \\ 
&\widetilde{W}=\big\{f\in H^1(\mathbb{R}): xf\in L^2(\mathbb{R})\big\}.
\end{split}
\end{equation*}
Then $\widetilde W\subset W(\mathbb{R})$.
\end{lemma}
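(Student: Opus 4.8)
The plan is to show that for $f \in \widetilde{W}$ one has $|f|^2 \log|f|^2 \in L^1(\mathbb{R})$, by splitting the integral into the region where $|f|$ is small and the region where $|f|$ is large. On each region the logarithmic factor is controlled by a power of $|f|$, and the corresponding power of $f$ is integrable either because $xf \in L^2$ (which forces decay at infinity) or because $f \in H^1 \hookrightarrow L^\infty$.

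\begin{proof}
Let $f\in\widetilde W$. We must show $|f|^2\log|f|^2\in L^1(\mathbb R)$. Split the integral according to the value of $|f|$:
\[
\int_{\mathbb R}\big||f(x)|^2\log|f(x)|^2\big|\,dx=\int_{\{|f|\le 1\}}\big||f|^2\log|f|^2\big|\,dx+\int_{\{|f|>1\}}|f|^2\log|f|^2\,dx.
\]

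\emph{Region $\{|f|>1\}$.} Since $f\in H^1(\mathbb R)\hookrightarrow L^\infty(\mathbb R)$, there is $M\ge 1$ with $|f(x)|\le M$ for a.e.\ $x$. Hence on this region $0\le\log|f|^2\le 2\log M$, so
\[
\int_{\{|f|>1\}}|f|^2\log|f|^2\,dx\le 2\log M\int_{\mathbb R}|f|^2\,dx=4\log M\,\|f\|_2^2<\infty .
\]

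\emph{Region $\{|f|\le 1\}$.} Here $\log|f|^2\le 0$, and for $0<t\le 1$ one has $t^2|\log t^2|\le C_\alpha\, t^{2-\alpha}$ for any fixed $\alpha\in(0,2)$ (since $s^{\alpha/2}|\log s|$ is bounded for $s\in(0,1]$). Choosing $\alpha=1$ gives, on $\{0<|f|\le 1\}$,
\[
\big||f|^2\log|f|^2\big|\le C\,|f|\le C\,(1+x^2)^{-1/2}\big(|f|^2(1+x^2)\big)^{1/2}.
\]
By Cauchy--Schwarz,
\[
\int_{\{|f|\le 1\}}\big||f|^2\log|f|^2\big|\,dx\le C\Big(\int_{\mathbb R}\frac{dx}{1+x^2}\Big)^{1/2}\Big(\int_{\mathbb R}|f|^2(1+x^2)\,dx\Big)^{1/2},
\]
and the last factor is finite because $f\in L^2(\mathbb R)$ and $xf\in L^2(\mathbb R)$. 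Adding the two estimates shows $|f|^2\log|f|^2\in L^1(\mathbb R)$, i.e.\ $f\in W(\mathbb R)$, so $\widetilde W\subset W(\mathbb R)$.
\end{proof}

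The only mild subtlety is the elementary bound $t^2|\log t^2|\le C\,t$ for $t\in(0,1]$; everything else is the Sobolev embedding $H^1(\mathbb R)\hookrightarrow L^\infty(\mathbb R)$ together with Cauchy--Schwarz against the integrable weight $(1+x^2)^{-1}$. I do not expect a genuine obstacle here; the point of the lemma is simply to justify that $Q$ and $E$ (hence $S_{\omega,\gamma}$) are well-defined on the weighted space $\widetilde W$, and the proof is a routine two-region estimate.
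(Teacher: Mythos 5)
Your proof is correct and follows essentially the same route as the paper: split according to $|f|\le 1$ versus $|f|>1$, reduce the small-$|f|$ region to $\int|f|$ (the paper uses $|\log|f||\le 1/|f|$, you use $t^2|\log t^2|\le Ct$) and control it by Cauchy--Schwarz using $xf\in L^2$ (the paper proves $\widetilde W\subset L^1$ with the weight $1/x^2$ off a bounded interval, you use the weight $(1+x^2)^{-1}$ globally, which is marginally cleaner), while the large-$|f|$ region is handled by the $H^1$ bound on $f$. No gaps.
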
 
\begin{proof}
\begin{enumerate}
\item[1)] It is easily seen that   $\widetilde W\subset L^1(\mathbb{R})$. Indeed, for $f\in \widetilde W$ and $-\infty<a<0<b<\infty$ we have
\begin{equation*}
\begin{split}
&\int\limits_{\mathbb{R}}|f|dx=\int\limits_{-\infty}^a|f|dx+\int\limits_a^b|f|dx+\int\limits^{\infty}_b|f|dx\\
&=\int\limits_{-\infty}^a|f|\cdot x\cdot\frac{1}{x}dx+\int\limits_a^b|f|dx+\int\limits^{\infty}_b|f|\cdot x\cdot\frac{1}{x}dx
\end{split}
\end{equation*}
\begin{equation*}
\begin{split}
&\leq\Big(\int\limits_{-\infty}^a (xf)^2dx\Big)^{\frac{1}{2}}\Big(\int\limits_{-\infty}^a \frac{1}{x^2}dx\Big)^{\frac{1}{2}}+(b-a)\sup\limits_{[a,b]}|f|\\&+\Big(\int\limits^{\infty}_b (xf)^2dx\Big)^{\frac{1}{2}}\Big(\int\limits^{\infty}_b \frac{1}{x^2}dx\Big)^{\frac{1}{2}}< \infty.
\end{split}
\end{equation*}
\item[2)] Let again $f\in \widetilde W$, then
\begin{equation}\label{f}
\int\limits_{\mathbb{R}}|f|^2|\log|f||dx=\int\limits_{\{x\in \mathbb{R}: |f|< 1\}}|f|^2|\log|f||dx+\int\limits_{\{x\in \mathbb{R}: |f|\geq 1\}}|f|^2|\log|f||dx.
\end{equation}
Note also  that
\begin{equation}\label{log}
|\log|f||<\frac{1}{|f|}\quad \text{for}\quad |f|<1,\qquad \text{and}\qquad |\log|f||<|f|\quad \text{for}\quad |f|\geq 1.
\end{equation} 
Since $f\in H^1(\mathbb{R})$, there exists $c>0$ such that $|f|<1$ for $\mathbb{R}\setminus [-c,c]$.
 Thus, from \eqref{f},  \eqref{log},  and the inclusion $\widetilde W\subset L^1(\mathbb{R})$ we get
 \begin{equation*}
 \begin{split}
 &\int\limits_{\mathbb{R}}|f|^2|\log|f||dx\leq \int\limits_{\{ x\in \mathbb{R}: |f|< 1\}}|f|dx+\int\limits_{\{ x\in \mathbb{R}: |f|\geq 1\}}|f|^3dx \\
 &\leq\int\limits_{\{x\in \mathbb{R}: |f|< 1\}}|f|dx+2c\sup\limits_{[-c,c]}|f|^3<\infty.
 \end{split}
\end{equation*}  
The assertion is proved.
  \end{enumerate}
\end{proof}

The global well-posedness property of the Cauchy problem for \eqref{NLS_delta} is ensured by the following theorem. 
\begin{theorem}\label{exist}
If $u_0\in \widetilde{W}$, there is a unique solution $u(t)$ of \eqref{NLS_delta} such that $u(t)\in C(\mathbb R, \widetilde{W})$ and $u(0,x)=u_0$. Furthermore, the conservation of energy and charge hold, i.e., for any $t\in \mathbb R$, we have
\begin{equation*}
E(u(t))=E(u_0),\quad Q(u(t))=Q(u_0).
\end{equation*}
Moreover, if an initial data $u_0$ is even, the solution $u(t)$ is also even.
\end{theorem}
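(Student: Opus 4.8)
The plan is to adapt the compactness scheme of \cite[Theorem 9.3.4]{Caz03} to the self-adjoint operator $\mathcal H_\gamma^\delta$ and to the weighted space $\widetilde W$. \emph{Regularization of the nonlinearity.} Since $f(z)=z\log|z|^2$ fails to be Lipschitz near $z=0$, I would first, as in \cite{Caz03}, split $f$ according to $|z|\lessgtr 1$ and smooth its part on $\{|z|\le 1\}$, obtaining a family $(f_\varepsilon)_{\varepsilon>0}$ of gauge-invariant nonlinearities, $f_\varepsilon(e^{i\theta}z)=e^{i\theta}f_\varepsilon(z)$, which are Lipschitz on bounded sets uniformly in $\varepsilon$, satisfy $|f_\varepsilon(z)|\le C(|z|^{1-\rho}+|z|^{1+\rho})$ with constants independent of $\varepsilon$ for some fixed small $\rho>0$, and converge to $f$ locally uniformly. \emph{The regularized problem.} Because $\mathcal H_\gamma^\delta$ is self-adjoint with form domain $H^1(\mathbb R)$, the group $(e^{-it\mathcal H_\gamma^\delta})_{t\in\mathbb R}$ is unitary on $L^2(\mathbb R)$ and bounded on $H^1(\mathbb R)$; a contraction argument on the Duhamel formula then yields, for each $\varepsilon>0$, a unique maximal solution $u_\varepsilon\in C(I_\varepsilon,H^1(\mathbb R))$ of $i\partial_t u_\varepsilon-\mathcal H_\gamma^\delta u_\varepsilon+f_\varepsilon(u_\varepsilon)=0$, $u_\varepsilon(0)=u_0$, along which the charge $Q$ and the regularized energy $E_\varepsilon$ (with $f_\varepsilon$ in place of $f$) are conserved. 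For $\gamma<0$ the boundary term $-\tfrac{\gamma}{2}|u_\varepsilon(0)|^2\ge 0$ is favourable, while for $\gamma>0$ it is absorbed using $|u_\varepsilon(0)|^2\le C\|u_\varepsilon\|_2\|\partial_x u_\varepsilon\|_2$ and interpolation of the logarithmic term; hence the conservation laws give $\sup_t\|u_\varepsilon(t)\|_{H^1}\le C$ with $C$ independent of $\varepsilon$, and in particular $I_\varepsilon=\mathbb R$.

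\emph{The weighted estimate.} Using the equation one computes, for each $\varepsilon$ (rigorously after a further smoothing of $u_0$ that is removed at the end),
$$\frac{d}{dt}\int_{\mathbb R}x^2|u_\varepsilon(t)|^2\,dx=4\,\im\int_{\mathbb R}x\,\overline{u_\varepsilon}\,\partial_x u_\varepsilon\,dx .$$
Here the contribution of $f_\varepsilon$ drops out because $\overline z\,f_\varepsilon(z)\in\mathbb R$ by gauge invariance, and --- crucially --- the boundary terms at $x=0$ produced by the integration by parts vanish thanks to the factor $x^2$, so the $\delta$-interaction does not enter this identity at all. Bounding the right-hand side by $2\|x u_\varepsilon\|_2^2+2\|\partial_x u_\varepsilon\|_2^2$ and invoking the uniform $H^1$ bound, Gronwall's lemma gives $\sup_{|t|\le T}\|x u_\varepsilon(t)\|_2\le C(T)$ uniformly in $\varepsilon$.

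\emph{Passage to the limit, continuity and conservation.} By the previous bounds the family $(u_\varepsilon)$ is bounded in $L^\infty(\mathbb R,\widetilde W)$ and, through the equation, equicontinuous in $t$ with values in a space of negative order; Banach--Alaoglu, the compact embedding $\widetilde W\hookrightarrow L^2_{\mathrm{loc}}(\mathbb R)$ and a diagonal extraction produce a subsequence converging to some $u\in L^\infty(\mathbb R,\widetilde W)$, weak-$*$ in $\widetilde W$ and locally uniformly in time in $L^2$. Passing to the limit in the Duhamel formula --- using $f_\varepsilon\to f$ locally uniformly, the a.e.\ convergence of $u_\varepsilon$, the uniform bound on $|f_\varepsilon(z)|$ and dominated convergence --- shows that $u$ solves \eqref{NLS_delta} with $u(0)=u_0$. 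Strong continuity $u\in C(\mathbb R,\widetilde W)$ is then obtained by the arguments of \cite[Section 9.3]{Caz03} for the $H^1$-part, combined with the identity above (whose right-hand side is controlled once $H^1$-continuity is known) for the weighted part, via the principle that weak convergence together with norm convergence implies strong convergence in a Hilbert space. Conservation of $Q$ is immediate from the $L^2$-convergence, and conservation of $E$ follows by passing to the limit in $E_\varepsilon(u_\varepsilon(t))=E_\varepsilon(u_0)$ to obtain $E(u(t))\le E(u_0)$ and then applying the same inequality to the time-reversed solution.

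\emph{Uniqueness and parity.} If $u,v\in C(\mathbb R,\widetilde W)$ solve \eqref{NLS_delta} with the same datum, set $w=u-v$; since the quadratic form of $\mathcal H_\gamma^\delta$ is real-valued, $\tfrac{d}{dt}\|w\|_2^2=-2\,\im\int_{\mathbb R}\bigl(f(u)-f(v)\bigr)\overline w\,dx$, and the elementary inequality
$$\bigl|\im\bigl[(z_1\log|z_1|^2-z_2\log|z_2|^2)\,\overline{(z_1-z_2)}\bigr]\bigr|\le C|z_1-z_2|^2,\qquad z_1,z_2\in\mathbb C,$$
together with Gronwall's lemma forces $w\equiv 0$. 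Finally, the parity operator $f(x)\mapsto f(-x)$ leaves $\dom(\mathcal H_\gamma^\delta)$ invariant (the condition $f'(0+)-f'(0-)=-\gamma f(0)$ is symmetric under $x\mapsto -x$), commutes with $\mathcal H_\gamma^\delta$ and with the nonlinearity, and preserves $\widetilde W$; hence if $u_0$ is even then $x\mapsto u(t,-x)$ solves the same Cauchy problem, and by uniqueness $u(t)$ is even for all $t$. The main obstacle is the weighted virial/commutator estimate and the resulting strong $\widetilde W$-continuity: this is precisely what makes $\widetilde W$, rather than merely $H^1$, an invariant class for the flow and is the ingredient beyond Cazenave's $H^1$-theory; the careful handling of the logarithm near $u=0$, both in the limit passage and in proving conservation of $E$, is the remaining delicate point.
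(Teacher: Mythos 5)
Your proposal is correct and follows essentially the same route as the paper: approximate the logarithmic nonlinearity by Lipschitz nonlinearities, solve the regularized problem globally in $H^1(\mathbb R)$ using the semi-boundedness of $\mathcal H_\gamma^\delta$ and the equivalence of the form norm with the $H^1$-norm, pass to the limit by compactness with conservation of charge and (via monotonicity/time reversal) of energy, and propagate the weighted bound $xu\in L^2(\mathbb R)$ by the virial-type identity as in Cazenave's Lemma 7.6.2, with uniqueness from the logarithmic $L^2$-Lipschitz inequality and parity from uniqueness. The only cosmetic difference is your choice of regularization (smoothing near $z=0$ with a power-type bound rather than truncating $\log s$ at $\pm n$), and you in fact spell out several steps the paper delegates to Cazenave and Cazenave--Haraux.
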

\begin{proof}
Generally we use an approach proposed in \cite{Caz80} with few natural modifications.
The proof can be divided into three parts.
 \begin{enumerate}
 \item[1)]  We introduce  the "reduced" Cauchy problem
 \begin{equation}\label{reduced}
 \begin{cases}
i\partial_tu_n-\mathcal H^\gamma_\delta u_n+u_nf_n(|u_n|^2)=0,\\
u_n(0)=u_0.
 \end{cases}
 \end{equation}
 Here $f_n(s)=\inf\{n, \sup\{-n, f(s)\}\}$ with $f(s)=\log s$, $s>0$. We define $F_n(s)=\int\limits_0^sf_n(\sigma)d\sigma$. By Theorem 3.3.1 in \cite{Caz03}, we imply that for any $u_0\in  H^1(\mathbb{R})$ there exists  unique global solution $u_n$ of \eqref{reduced} such that $u_n\in C(\mathbb{R}, H^1(\mathbb{R}))$ and $u_n(0)=u_0$. Moreover, the conservation of charge and energy hold, i.e., for all $t$
 \begin{equation*}
 \begin{split}
 &||u_n(t)||_2=||u_0||_2,\quad E_n(u_n(t))=E_n(u_0),\\
 &E_n(u(t))=\tfrac{1}{2}||\partial_x u(t)||_2^2-\tfrac{1}{2}\int_{\mathbb R} F_n(|u(t, x)|^2) dx-\tfrac{\gamma}{2} |u(t, 0)|^2.
 \end{split}
 \end{equation*}
Indeed, we may  check the assumptions of Theorem 3.3.1 in \cite{Caz03}. Note that  $f_n$ is Lipschitz continuous from $\mathbb{R}_+$ to $\mathbb{R}$.
We also notice  that $\mathcal H^\gamma_\delta$ defined in \eqref{H_delta} satisfies $\mathcal H_\delta^\gamma\geq -m$, where $m = \gamma^2/4$ if $\gamma > 0$, and $m = 0$ if $\gamma < 0$. Thus, $A =-\mathcal H^\gamma_\delta-m $ is the self-adjoint negative operator in $X = L^2(\mathbb{R})$ on the  domain $\dom(A) = \dom(\mathcal H_\delta^\gamma)$. Moreover, in our case the  norm
$$
||v||^2_{X_A}=||v'||_2^2+(m+1)||v||_2^2-\gamma|v(0)|^2
$$
is  equivalent to the usual $H^1(\mathbb{R})$-norm.

\item[2)] The second step is analogous to Lemma 2.3.5 in \cite{Caz80}. In particular, it can be shown that there exists solution $u$ of \eqref{NLS_delta} in the sense of distributions (which appears to be weak-$*$ limit of solutions $u_{n_k}$ of Cauchy problem \eqref{reduced})  such that conservation of charge holds.  Further, conservation of energy $E(u)$  defined by \eqref{energy} follows from its monotonicity. Thus, inclusion $u\in C(\mathbb{R}, H^1(\mathbb{R}))$ follows from conservation laws.
 \item[3)] The last step is to show that the inclusion $xu_0\in L^2(\mathbb{R})$ implies the inclusion $xu\in L^2(\mathbb{R})$. The proof of this fact repeats one of Lemma 7.6.2 from \cite{Caz98}. 
 \end{enumerate}
\end{proof}
\begin{remark}
 For the  completeness of the exposition we remark that for $\gamma>0$ the unitary group $G_\gamma(t)=e^{-it\mathcal H^\gamma_\delta}$ associated to  equation \eqref{reduced} (or equivalently  to \eqref{NLS_delta}) is given explicitly  by the formula (see \cite{AlbBrz95, GavSch86})
\begin{equation*}\label{pospro1}
G_\gamma(t) \phi(x)= e^{it \Delta} (\phi\ast \tau_\gamma) (x) \chi^0_{+} + \Big[e^{it \Delta} \phi(x) + e^{it \Delta} (\phi\ast \rho_\gamma) (-x) \Big ]\chi^0_{-},
\end{equation*}
where
$$
\rho_\gamma(x)=-\frac{\gamma}{2} e^{\frac{\gamma}{2} x}\chi^0_{-},\;\; \tau_\gamma(x)=\delta (x)+ \rho_\gamma(x).
$$
Here  $\chi^0_{+}$ and $\chi^0_{-}$  denote  the characteristic functions of $[0,+\infty)$ and $(-\infty, 0]$ respectively.
\end{remark}

\section{Proof of the main result}
In this Section we  prove  Theorem \ref{main}. Initially we define key functional $S_{\omega,\gamma}$ associated with NLS-log-$\delta$ equation. Next we establish the relation  between the second variation  of $S_{\omega,\gamma}$ and the self-adjoint operators $\mathcal L^\gamma_{2}$ and $\mathcal L^\gamma_{1}$. Verifying the spectral properties of $\mathcal L^\gamma_{2}$ and $\mathcal L^\gamma_{1}$,  we arrive at the assertions of Theorem \ref{main}. In our analysis we follow some  ideas from  \cite{CozFuk08}.

\subsection{Linearization of NLS-log-$\delta$ equation}\label{sub3.1}
We start  introducing the key functional $S_{\omega,\gamma}=E+(\omega+1) Q.$ 
It can be easily verified that the profile $\varphi_{\omega,\gamma}$  is a critical point of $S_{\omega,\gamma}$.
 Indeed, for $u,v\in\widetilde{W}$,
 \begin{equation*}
 \begin{split}
& S_{\omega,\gamma}'(u)v=\frac{d}{dt}S_{\omega,\gamma}(u+tv)|_{t=0}\\&=\re\left[\int\limits_{\mathbb{R}}u'\overline{v'}dx-\int\limits_{\mathbb{R}}u\overline{v}(\log|u|^2-\omega)dx-\gamma u(0)\overline{v(0)}\right].
 \end{split}
 \end{equation*}
Since $\varphi_{\omega,\gamma}$  satisfies \eqref{eq2}, $S_{\omega,\gamma}'(\varphi_{\omega,\gamma})=0$. 

In the approach by  \cite{GrilSha90} crucial role is played by spectral properties of the linear operator associated with the second variation of $S_{\omega,\gamma}$ calculated  at $\varphi_{\omega,\gamma}$. To express $S_{\omega,\gamma}'' (\varphi_{\omega,\gamma})$ it is convenient to split $u, v\in \widetilde{W}$ into real and imaginary parts: $u=u_1+iu_2,\,\,v=v_1+iv_2$. Then  we get 
\begin{equation*}\label{q_form}
\begin{split}
&S_{\omega,\gamma}''(\varphi_{\omega,\gamma})(u, v)=\int\limits_{\mathbb{R}}u'_1v'_1dx-\int\limits_{\mathbb{R}}u_1 v_1(\log|\varphi_{\omega,\gamma}|^2-\omega+2)dx -\gamma u_1(0)v_1(0) \\
&+\int\limits_{\mathbb{R}}u'_2v'_2dx-\int\limits_{\mathbb{R}}u_2 v_2( \log|\varphi_{\omega,\gamma}|^2-\omega)dx -\gamma u_2(0)v_2(0)
\\&=\int\limits_{\mathbb{R}}u'_1v'_1dx+\int\limits_{\mathbb{R}}u_1 v_1 \left(\left(|x|+\tfrac{\gamma}{2}\right)^2-3\right)dx-\gamma u_1(0)v_1(0)\\&+\int\limits_{\mathbb{R}}u'_2v'_2dx+\int\limits_{\mathbb{R}}u_2 v_2 \left(\left(|x|+\tfrac{\gamma}{2}\right)^2-1\right)dx-\gamma u_2(0)v_2(0).
\end{split}
\end{equation*}
Therefore, $S_{\omega,\gamma}''(\varphi_{\omega,\gamma})(u, v)$ can be formally rewritten as 
\begin{equation}\label{SBB}
S_{\omega,\gamma}''(\varphi_{\omega,\gamma})(u, v)=B^\gamma_{1}(u_1,v_1)+B^\gamma_{2}(u_2,v_2),
\end{equation}
where 
\begin{equation}\label{spec14}
\begin{split}
&B^\gamma_{1}(f,g)=\int\limits_{\mathbb{R}}f'g'dx+\int\limits_{\mathbb{R}}f g \left(\left(|x|+\tfrac{\gamma}{2}\right)^2-3\right)dx-\gamma f(0)g(0),\\
&B^\gamma_{2}(f,g)=\int\limits_{\mathbb{R}}f'g'dx+\int\limits_{\mathbb{R}}f g \left(\left(|x|+\tfrac{\gamma}{2}\right)^2-1\right)dx-\gamma f(0)g(0),
\end{split}
\end{equation} 
and $\dom(B^\gamma_{j})=\widetilde{W}\times \widetilde{W}, j\in\{1,2\}$.
Note that the forms $B^\gamma_{j}, j\in\{1,2\},$ are bilinear 
 bounded from below and closed. Therefore, by  the First Representation Theorem (see  \cite[Chapter VI, Section 2.1]{kato}), they define operators $\mathcal L^\gamma_{1}$ and $\mathcal L^\gamma_{2}$ such that for $ j\in\{1,2\}$
 \begin{equation}\label{opera}
 \begin{split}
 &\dom(\mathcal L^\gamma_{j})=\{v\in \widetilde W: \exists w\in   L^2(\mathbb{R})\; s.t.\; \forall z\in \widetilde W, \;B^{\gamma}_j(v,z)=(w,z) \},\\
& \mathcal L^\gamma_{j}v=w.
 \end{split}
\end{equation}
In the following theorem we  describe the  operators $\mathcal L^\gamma_{1}$ and $\mathcal L^\gamma_{2}$ in more explicit form.  We  show  that they are basically the harmonic oscillator operators with $\delta$-interaction. 
\begin{theorem}\label{repres}
The operators $\mathcal L^\gamma_{1}$ and $\mathcal L^\gamma_{2}$ determined in (\ref{opera}) are given by 
\begin{equation*}
 \begin{split}
 \mathcal L^\gamma_{1}=-\frac{d^2}{dx^2}+ \Big(|x|+\frac{\gamma}{2}\Big)^2-3, \quad
 \mathcal L^\gamma_{2}=-\frac{d^2}{dx^2}+ \Big(|x|+\frac{\gamma}{2}\Big)^2-1 
 \end{split}
\end{equation*} 
on the domain $D_\gamma:=\{f\in\dom(\mathcal{H}_\gamma^\delta):\, x^2f\in L^2(\mathbb{R})\}$.
\end{theorem}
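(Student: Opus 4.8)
The plan is to identify the operators $\mathcal{L}^\gamma_j$, defined abstractly via the First Representation Theorem from the forms $B^\gamma_j$ in \eqref{spec14}, with the concrete Schrödinger operators $-\frac{d^2}{dx^2}+(|x|+\tfrac{\gamma}{2})^2-c_j$ (with $c_1=3$, $c_2=1$) acting on $D_\gamma$. Since both forms differ from one another only by the bounded multiplication operator $2\cdot\mathrm{Id}$, it suffices to treat them simultaneously; write $B^\gamma$ for either, with potential $V_\gamma(x):=(|x|+\tfrac{\gamma}{2})^2$. The strategy has two halves: first show $D_\gamma\subset\dom(\mathcal{L}^\gamma_j)$ with $\mathcal{L}^\gamma_j$ acting by the stated differential expression on $D_\gamma$, and then show the reverse inclusion $\dom(\mathcal{L}^\gamma_j)\subset D_\gamma$.

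For the first inclusion, take $v\in D_\gamma$, so $v\in\dom(\mathcal{H}^\delta_\gamma)$ (in particular $v\in H^1(\mathbb{R})\cap H^2(\mathbb{R}\setminus\{0\})$ and $v'(0+)-v'(0-)=-\gamma v(0)$) and $x^2 v\in L^2(\mathbb{R})$, which together with $v\in H^1$ and the quadratic growth of $V_\gamma$ gives $V_\gamma v\in L^2(\mathbb{R})$. Set $w:=-v''+V_\gamma v-c_j v\in L^2(\mathbb{R})$, where $v''$ is the classical second derivative on $\mathbb{R}\setminus\{0\}$. For arbitrary $z\in\widetilde{W}$, I would integrate by parts on $(-\infty,0)$ and $(0,\infty)$ separately:
\begin{equation*}
\int_{\mathbb{R}}(-v'')\bar z\,dx=\int_{\mathbb{R}}v'\bar z'\,dx-\bigl[v'(0+)-v'(0-)\bigr]\overline{z(0)}=\int_{\mathbb{R}}v'\bar z'\,dx+\gamma v(0)\overline{z(0)},
\end{equation*}
the boundary terms at $\pm\infty$ vanishing because $v'\in L^2$ decays (after noting $v''\in L^2$, so $v'\in H^1$ on each half-line and $v'\to0$). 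This yields $(w,z)=B^\gamma_j(v,z)$ for all $z\in\widetilde{W}$, hence $v\in\dom(\mathcal{L}^\gamma_j)$ and $\mathcal{L}^\gamma_j v=w$. One must check the forms are indeed closed and bounded below so that the First Representation Theorem applies and $\mathcal{L}^\gamma_j$ is self-adjoint; this follows from the equivalence of $\|\cdot\|_{X_A}$ with the $H^1$-norm (already used in the proof of Theorem~\ref{exist}) together with the weight $V_\gamma\ge0$, making $\widetilde{W}$ the form domain.

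For the reverse inclusion, let $v\in\dom(\mathcal{L}^\gamma_j)$ with $\mathcal{L}^\gamma_j v=w\in L^2(\mathbb{R})$, so $B^\gamma_j(v,z)=(w,z)$ for all $z\in\widetilde{W}$. Testing first against $z\in C_0^\infty(\mathbb{R}\setminus\{0\})$ kills the boundary term and shows $-v''+V_\gamma v-c_j v=w$ in the distributional sense on $\mathbb{R}\setminus\{0\}$; since $v\in H^1$ is locally bounded and $V_\gamma$ is locally smooth, elliptic regularity gives $v\in H^2_{\mathrm{loc}}(\mathbb{R}\setminus\{0\})$. The global statements $x^2 v\in L^2$ and $v\in H^2(\mathbb{R}\setminus\{0\})$ — i.e.\ that $v$ genuinely lands in $D_\gamma$ rather than merely solving the equation locally — require a commutator/cutoff argument: multiply by suitable cutoffs $\chi_R$ and control $\|V_\gamma^{1/2}\chi_R v\|_2$ and $\|\chi_R v''\|_2$ uniformly in $R$ using the form identity and $w\in L^2$, exploiting $V_\gamma\to\infty$. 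This is the standard self-adjointness-of-the-harmonic-oscillator mechanism adapted to the kink potential. Finally, testing against general $z\in\widetilde{W}$ and comparing with the integration-by-parts identity above recovers $v'(0+)-v'(0-)=-\gamma v(0)$, so $v\in\dom(\mathcal{H}^\delta_\gamma)$ and hence $v\in D_\gamma$.

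I expect the main obstacle to be the reverse inclusion, specifically the global decay/regularity step: extracting $x^2 v\in L^2(\mathbb{R})$ and $v\in H^2(\mathbb{R}\setminus\{0\})$ from the abstract form identity. The non-smoothness of $V_\gamma$ at the origin is harmless (it is still continuous and the $\delta$-interaction is encoded separately in the jump condition), but the unbounded confining potential means one cannot simply invoke a global Sobolev estimate; one needs the weighted a priori estimate that is the heart of showing the harmonic-oscillator-type operator is essentially self-adjoint on its natural domain. Everything else — the integration by parts, recovering the jump condition, the closedness of the forms — is routine given the tools already assembled in Sections 2 and 3.
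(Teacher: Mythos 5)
Your argument is correct in outline, but it takes a genuinely different route from the paper's. The paper never works with the differential expression directly: it splits $B^\gamma_1=B^\gamma+B_1$ into the pure $\delta$-form $B^\gamma(u,v)=(u',v')-\gamma u(0)v(0)$ on $H^1(\mathbb R)\times H^1(\mathbb R)$ and the potential form $B_1(u,v)=(V^\gamma_1u,v)$ on $\widetilde W\times\widetilde W$, identifies the operator associated with $B^\gamma$ as $\mathcal H^\delta_\gamma$ by noting that it extends $-\tfrac{d^2}{dx^2}$ on $\{f\in H^2(\mathbb R):f(0)=0\}$ and invoking the classification of such extensions in \cite[Theorem 3.1.1]{AlbGes05} (the parameter $\beta=-\gamma$ being pinned down by comparing quadratic forms), and identifies the operator associated with $B_1$ with the maximal multiplication operator by $V^\gamma_1$, which is already self-adjoint. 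That route outsources all regularity questions to these structural facts, whereas you keep everything at the level of the differential expression, proving the two inclusions via integration by parts, interior elliptic regularity, and a weighted a priori (``separation'') estimate. Your version is more self-contained and makes the jump condition emerge explicitly, but it carries the analytic burden exactly where you locate it: the estimate controlling $\|v''\|_{L^2(\mathbb R\setminus\{0\})}$ and $\|x^2v\|_2$ by $\|w\|_2+\|v\|_2$ for solutions of the form identity is only asserted (``standard harmonic-oscillator mechanism''), and for the reverse inclusion this is essentially the whole content; to be complete you would have to carry out the cutoff argument or cite a separation/essential-self-adjointness theorem for confining potentials, adapted to the two half-lines.

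One concrete correction: integrating by parts on the two half-lines gives $\int_{\mathbb R}(-v'')\bar z\,dx=\int_{\mathbb R}v'\bar z'\,dx+\bigl[v'(0+)-v'(0-)\bigr]\overline{z(0)}=\int_{\mathbb R}v'\bar z'\,dx-\gamma v(0)\overline{z(0)}$; your displayed identity has the opposite sign on the boundary term, and with that sign $(w,z)$ would produce $+\gamma v(0)\overline{z(0)}$ and fail to match the $-\gamma f(0)g(0)$ term of $B^\gamma_j$ in \eqref{spec14}. The correct sign is exactly what is needed, so the slip is harmless once fixed, but as written the first inclusion does not close.
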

\begin{proof}
Since the proof for $\mathcal L^\gamma_{2}$ is similar to the one for $\mathcal L^\gamma_{1}$, we  deal with $\mathcal L^\gamma_{1}$. Let 
$B^{\gamma}_1= B^{\gamma} + B_1$, where $B^{\gamma}:  H^1(\mathbb{R})\times  H^1(\mathbb{R})\to \mathbb R$ and $ B_{1}: \widetilde{W}\times  \widetilde{W}\to \mathbb R$ are  defined by
\begin{equation*}
B^{\gamma}(u,v)=(u',v')-\gamma u(0)v(0),\quad B_{1}(u,v)=(V^\gamma_1 u,v),
\end{equation*}
and $V^\gamma_1 (x)=\Big(|x|+\frac{\gamma}{2}\Big)^2-3$. We denote by $\mathcal L^\gamma$ (resp. $\mathcal L_{1}$) the self-adjoint operator on 
$L^2(\mathbb{R})$  associated (by the First Representation Theorem) with $B^{\gamma}$ (resp. $B_1$). Thus,  
\begin{equation*}
  \begin{split}
&\dom(\mathcal L^\gamma)=\{v\in H^1(\mathbb{R}): \exists w\in   L^2(\mathbb{R})\; s.t.\; \forall z\in H^1(\mathbb{R}), \;B^{\gamma}(v,z)=(w,z) \},\\ &\mathcal L^\gamma v=w.
\end{split}
\end{equation*}
   We claim  that $\mathcal L^\gamma$ is a self-adjoint extension of the following symmetric operator 
\begin{equation*}\label{L0}
 \mathcal L^0=-\frac{d^2}{dx^2}, \quad \dom(\mathcal L^0)=\{ f\in  H^2(\mathbb R):  f(0)=0 \}.
\end{equation*} 
Indeed, let $v\in \dom(\mathcal L^0)$ and   $w= -v''\in L^2(\mathbb{R})$. Then for every $z\in  H^1(\mathbb{R})$ we have $B^{\gamma}(v,z)=(w,z)$. 
Thus,  $v\in \dom(\mathcal L^\gamma)$ and $\mathcal L^\gamma v=w=-v''$. Hence, $\mathcal L^0\subset \mathcal L^\gamma$, which yields the claim. Therefore, by \cite[Theorem 3.1.1]{AlbGes05}, there exists $\beta\in \mathbb R$ such that $\mathcal L^\gamma=-\Delta_{\beta}$, where
\begin{equation*}
 \begin{split}
 &-\Delta_{\beta}=-\frac{d^2}{dx^2}, \\
& \dom(-\Delta_{\beta})=\{f\in H^1(\mathbb R)\cap  H^2(\mathbb R\setminus\{0\}):\; 
 f'(0+)-f'(0-)=\beta f(0)\}.
 \end{split}
\end{equation*} 
 Next we shall prove that $\beta=-\gamma$. Take $v\in \dom(\mathcal L^\gamma)$ with $v(0)\neq 0$, then 
we obtain $$
(\mathcal L^\gamma v,v)=(v'(0+)-v'(0-))v(0) +\|v'\|_2^2=\|v'\|_2^2+\beta |v(0)|^2,$$
which should be equal to $B^{\gamma}(v,v)=\|v'\|_2^2-\gamma |v(0)|^2$.
Therefore, $\beta=-\gamma$.

Again, by	 the First Representation Theorem, 
\begin{equation*}
\begin{split}
  &\dom(\mathcal L_{1})=\{v\in \widetilde{W}: \exists w\in   L^2(\mathbb{R})\; s.t.\; \forall z\in \widetilde{W}, \;B_1(v,z)=(w,z)\},\\
  &\mathcal L_{1}v=w.
  \end{split}
  \end{equation*}
Note that $\mathcal L_1$ is the
 self-adjoint extension of the following  multiplication operator 
\begin{equation*}
  \mathcal L_{0,1} f =V^\gamma_1 f , \quad
 \dom(\mathcal L_{0,1})=\{f\in  H^2(\mathbb{R}):  V^\gamma_1 f\in L^2(\mathbb{R})\}.
 \end{equation*} 
 Indeed, for $v\in \dom(\mathcal L_{0,1})$ we have $v\in \widetilde{W}$, and we define $w= V^\gamma_1 v\in L^2(\mathbb{R})$. Then for every $z\in \widetilde{W}$ we get $B_1(v,z)=(w,z)$. Thus, 
$v\in \dom(\mathcal L_1)$ and $\mathcal L_1v=w=V^\gamma_1 v$. Hence, $\mathcal L_{0,1}\subseteq \mathcal L_1$.  Since $\mathcal L_{0,1}$ is self-adjoint, $\mathcal L_1= \mathcal L_{0,1}$.
The Theorem is proved.
\end{proof}
\begin{remark}
\begin{itemize}
\item[$(i)$] We mention that  $\widetilde{W}$ coincides with the natural domain of the bilinear forms $B_1^\gamma$ and $B_2^\gamma$ which additionally justifies the choice of the space $\widetilde W$ for investigation of the orbital stability.
 \item[$(ii)$] It's worth mentioning that   the operators $\mathcal L^\gamma_{1}$ and $\mathcal L^\gamma_{2}$ coincide up to a constant term, namely, $\mathcal L^\gamma_{1}=\mathcal L^\gamma_{2}-2$.
 This is a special feature
of the logarithmic nonlinearity  that clarifies and simplifies the spectral
analysis implemented in the next Subsection. 
\item[$(iii)$] We remark that it is also possible to propose  an alternative   proof of Theorem \ref{repres} avoiding the decomposition of the forms $B^\gamma_j$ into the the sum of two forms, though it  will require more extensive proof. Indeed, the self-adjoint operator $\mathcal L^\gamma_{1}$ associated with the form $B^\gamma_1$ is a self-adjoint extension of the symmetric operator  $\mathcal L_0$ defined in \eqref{L_0}. By \cite[Chapter IV,\S 14, Theorems 7 and 8]{Nai67}, we get 
\begin{equation}\label{D_g}
\dom(\mathcal L^\gamma_{1})=\left\{\begin{array}{c} f: 
f=f_0+cf_i+ce^{i\theta}f_{-i},\\
\,f_0\in\dom(\mathcal L_0),c\in\mathbb{C}, \theta\in[0,2\pi)\end{array}\right\},
\end{equation}
where $f_{\pm i}$ are deficiency vectors, namely, $\ker(\mathcal L_0^*\mp i\mathcal I)=\Span\{f_{\pm i}\}$. The deficiency vector $f_{i}$ has the form  (we note that $f_{-i}=\overline{f_i}$)
$$
f_{i}(x)=\begin{cases}
C_1U\Big(-\tfrac{3+i}{2}, \sqrt{2}(x+\tfrac{|\gamma|}{2})\Big),\,\, x>0,\\
C_2U\Big(-\tfrac{3+i}{2}, \sqrt{2}(x-\tfrac{|\gamma|}{2})\Big),\,\, x<0,
\end{cases}$$
where   $C_1, C_2$  are fixed constants that  guarantee  continuity of $f_i$ at $x=0$.  The function $U\Big(-\tfrac{3+i}{2},\, \cdot\Big)$ was found  reducing the equation 
$$-f''(x)+(|x|+\tfrac{\gamma}{2})^2f(x)-(3+i)f(x)=0,$$
via change of variables to the Weber equation (see (19.1.2) in \cite{AbrSte83})
$$g''(z)-(\tfrac 1{4}z^2-\tfrac{3+i}{2})g(z)=0.$$
This equation has the  solution $U(a,z)$ (with $a=-\tfrac{3+i}{2}$) such that $\lim \limits_{|z|\rightarrow \infty}U(a,z)=0$ (see (19.8.1) in \cite{AbrSte83} and also \cite[Chapter 6]{BeaWon10}). In particular, $U(a,z)$ is given by  (see (19.3.1), (19.3.3), (19.3.4) in \cite{AbrSte83})
$$U(a,z)={\frac  {1}{2^{\xi }{\sqrt  {\pi }}}}\left[\cos(\xi \pi )\Gamma (1/2-\xi )\,y_{1}(a,z)-{\sqrt  {2}}\sin(\xi \pi )\Gamma (1-\xi )\,y_{2}(a,z)\right],$$
where $\xi ={\frac  {1}{2}}a+{\frac  {1}{4}}$ and 
\begin{equation*}
\begin{split}
&y_{1}(a,z)=\exp(-z^{2}/4) _{1}F_{1}\left({\tfrac  12}a+{\tfrac  14};{\tfrac  12};{\frac  {z^{2}}{2}}\right),\\
&y_{2}(a,z)=z\exp(-z^{2}/4) _{1}F_{1}\left({\tfrac  12}a+{\tfrac  34};{\tfrac  32};{\frac  {z^{2}}{2}}\right),
\end{split}
\end{equation*}
in which  $_{1}F_{1}(\cdot;\cdot;\cdot)$ is the confluent hypergeometric function (see \cite[Chapter 13]{AbrSte83}). The function $U(a,z)$ is called \textit{parabolic cylinder function}.
Using the definition of $y_{1}(a,z)$ and $y_{2}(a,z)$, it can be shown (after laborious  calculations) 
that the  set \eqref{D_g} coincides with $D_\gamma$.
\end{itemize}
\end{remark}

Next, we consider the  form  $S_{\omega,\gamma}''(\varphi_{\omega,\gamma}):\widetilde{W}\times\widetilde{W}\rightarrow \mathbb{C}$ as a linear operator $\mathcal H_{\omega, \gamma}: \widetilde{W}\rightarrow \widetilde{W}'$. 
Our main stability result follows from the next theorem (see \cite[Instability Theorem and Stability Theorem]{GrilSha90}).
\begin{theorem}\label{st0} 
Let $\gamma \neq 0$  and  
\begin{equation*}\label{st}
p_\gamma(\omega_0)=
\begin{cases}
\begin{aligned}
&1, \quad  {\text{if}} \;\; \partial_\omega\|\varphi_{\omega,\gamma}\|_2^2>0,\;\;at\;\; \omega=\omega_0,\\
&0, \quad  {\text{if}} \;\; \partial_\omega\|\varphi_{\omega,\gamma}\|_2^2<0,\;\;at\;\; \omega=\omega_0.
\end{aligned}
\end{cases}
\end{equation*}
Then the following assertions hold.
\begin{itemize}
\item[$(i)$] If $n(\mathcal H_{\omega_0, \gamma})=p_\gamma(\omega_0)$, then the standing wave $e^{i\omega t}\varphi_{\omega,\gamma}$  is orbitally  stable in  $\widetilde{W}$. 
\item[$(ii)$] If $n(\mathcal H_{\omega_0, \gamma})-p_\gamma(\omega_0)$ is odd, then the standing wave $e^{i\omega t}\varphi_{\omega,\gamma}$  is orbitally  unstable in   $\widetilde{W}$. 
\end{itemize}
\end{theorem}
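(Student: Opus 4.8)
The plan is to apply the abstract stability/instability theory of Grillakis, Shatah and Strauss \cite{GrilSha90}, so the essential point is to verify its hypotheses in the present setting and then translate its conclusion into the numerology stated above. First I would fix the Hilbert space $X=\widetilde W$, the phase action $e^{i\theta}$, its skew-adjoint generator, and record that the conserved quantities are $E$ and $Q$ (Theorem \ref{exist}), with $S_{\omega,\gamma}=E+(\omega+1)Q$ having $\varphi_{\omega,\gamma}$ as a critical point (computed just above) and $S''_{\omega,\gamma}(\varphi_{\omega,\gamma})=\mathcal H_{\omega,\gamma}\in B(\widetilde W,\widetilde W')$ the self-adjoint operator whose associated form splits as $B^\gamma_1\oplus B^\gamma_2$ acting on real/imaginary parts. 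By Theorem \ref{repres} these forms are represented by the harmonic-oscillator-type operators $\mathcal L^\gamma_1,\mathcal L^\gamma_2$ on $D_\gamma$, which have compact resolvent (the potential $(|x|+\tfrac\gamma2)^2$ is confining), hence purely discrete spectrum bounded below. Consequently $\sigma(\mathcal H_{\omega,\gamma})$ near $0$ consists of finitely many eigenvalues, so $n(\mathcal H_{\omega,\gamma})<\infty$ is well defined, and the spectral decomposition of $\widetilde W$ into negative, zero and positive subspaces of $\mathcal H_{\omega,\gamma}$ required by \cite{GrilSha90} holds.

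Next I would identify the kernel of $\mathcal H_{\omega,\gamma}$. Differentiating \eqref{eq2} in the phase gives $i\varphi_{\omega,\gamma}\in\ker\mathcal H_{\omega,\gamma}$, equivalently $\varphi_{\omega,\gamma}\in\ker\mathcal L^\gamma_2$; indeed one checks directly from \eqref{gausson} that $\mathcal L^\gamma_2\varphi_{\omega,\gamma}=0$, and since $\varphi_{\omega,\gamma}>0$ it is the ground state of $\mathcal L^\gamma_2$, so $\ker\mathcal L^\gamma_2=\Span\{\varphi_{\omega,\gamma}\}$ is simple and $\mathcal L^\gamma_2\ge 0$ with $0=\min\sigma(\mathcal L^\gamma_2)$. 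Because there is no translation symmetry (the defect breaks it), no further kernel directions arise; in particular $\mathcal L^\gamma_1=\mathcal L^\gamma_2-2$ has $0\notin\sigma(\mathcal L^\gamma_1)$ (its lowest eigenvalue is $-2<0$), so $\ker\mathcal H_{\omega,\gamma}=\Span\{i\varphi_{\omega,\gamma}\}$ is one–dimensional and the nondegeneracy condition of \cite{GrilSha90} — namely that the only kernel comes from the symmetry group — is met. Here the uniqueness statement Theorem \ref{uniq} is what guarantees the orbit $\{e^{i\theta}\varphi_{\omega,\gamma}\}$ exhausts the critical set.

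With $n(\mathcal L^\gamma_2)=0$, the GSS theory says the stability is governed by the interplay between $n(\mathcal L^\gamma_1)$ and the sign of the scalar $d''(\omega)=\partial_\omega Q(\varphi_{\omega,\gamma})=\tfrac12\partial_\omega\|\varphi_{\omega,\gamma}\|_2^2$, which is exactly the slope condition encoded by $p_\gamma(\omega_0)$: the relevant count is $n(\mathcal H_{\omega,\gamma})-p_\gamma(\omega_0)$, where the $p_\gamma$ term records whether the single symmetry direction contributes a spurious negative direction to the constrained functional (it does when $\partial_\omega\|\varphi_{\omega,\gamma}\|_2^2>0$). I would then invoke the Stability Theorem of \cite{GrilSha90} to get $(i)$ when this difference is $0$, and the Instability Theorem to get $(ii)$ when it is odd, after checking the remaining technical hypotheses: $C^2$ regularity of $E$ and $Q$ on $\widetilde W$ (this is precisely why $\widetilde W$ rather than $W(\mathbb R)$ or $H^1$ was chosen — see Lemma \ref{incl} and the discussion around \eqref{tilde_W}), the $C^1$ map $\omega\mapsto\varphi_{\omega,\gamma}$ from \eqref{gausson}, and local well-posedness with conservation laws from Theorem \ref{exist}.

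The main obstacle is not in this theorem at all but in supplying $n(\mathcal L^\gamma_1)$ and the sign of $\partial_\omega\|\varphi_{\omega,\gamma}\|_2^2$, which feed into it; within the proof of Theorem \ref{st0} itself the only genuinely delicate points are (a) verifying that $\mathcal H_{\omega,\gamma}$, as an element of $B(\widetilde W,\widetilde W')$, satisfies the coercivity-modulo-kernel hypothesis of \cite{GrilSha90} on the subspace where $Q$ is constant — this reduces, via the $\oplus$ decomposition, to the positivity of $\mathcal L^\gamma_2$ off its ground state together with a one-dimensional computation involving $\partial_\omega\varphi_{\omega,\gamma}$ on the $\mathcal L^\gamma_1$ side — and (b) confirming that the abstract theorems apply verbatim despite $\widetilde W\subsetneq L^2$, i.e. that the generator of the phase flow and the operator $\mathcal H_{\omega,\gamma}$ are compatible with the rigged-space framework $\widetilde W\hookrightarrow L^2\hookrightarrow\widetilde W'$.
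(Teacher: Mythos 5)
Your strategy coincides with the paper's: Theorem \ref{st0} is obtained by specializing the Stability and Instability Theorems of \cite{GrilSha90}, with Assumption 1 supplied by the well-posedness result (Theorem \ref{exist}), Assumption 2 by the explicit smooth curve $\omega\mapsto\varphi_{\omega,\gamma}$ in \eqref{gausson}, and Assumption 3 by the spectral statement that the paper isolates as Theorem \ref{ass3} (finitely many negative eigenvalues, kernel equal to $\Span\{i\varphi_{\omega,\gamma}\}$, remaining spectrum positive and bounded away from zero). Your remarks on the choice of $\widetilde W$, the splitting of $S''_{\omega,\gamma}(\varphi_{\omega,\gamma})$ into $B^\gamma_1$ and $B^\gamma_2$, the discreteness of the spectra of $\mathcal L^\gamma_1,\mathcal L^\gamma_2$, and the role of $p_\gamma(\omega_0)$ all match the paper's treatment.

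The genuine gap is in your in-line verification of the kernel hypothesis, which Theorem \ref{st0} needs unconditionally (it is not part of the ``if'' clauses). You assert $\ker\mathcal H_{\omega,\gamma}=\Span\{i\varphi_{\omega,\gamma}\}$ because ``the defect breaks translation symmetry'' and because $\mathcal L^\gamma_1=\mathcal L^\gamma_2-2$ ``has $0\notin\sigma(\mathcal L^\gamma_1)$ (its lowest eigenvalue is $-2<0$)''. The parenthetical is a non sequitur: the location of the bottom of the spectrum says nothing about whether $0$ is also an eigenvalue; equivalently, $0\in\sigma(\mathcal L^\gamma_1)$ iff $2\in\sigma(\mathcal L^\gamma_2)$, which nonnegativity of $\mathcal L^\gamma_2$ does not exclude. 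Indeed, for $\gamma=0$ one has $0\in\sigma(\mathcal L^0_1)$ with eigenfunction $\varphi'_{\omega,0}$, so the triviality of $\ker\mathcal L^\gamma_1$ for $\gamma\neq 0$ is exactly the point that must be argued; ``no translation symmetry'' is the intuition, not a proof. The paper establishes it by an explicit computation: any $L^2$-solution of $\mathcal L^\gamma_1 u=0$ must be a multiple of $\varphi'_{\omega,\gamma}$ on each half-line (uniqueness up to a constant of the $L^2$-solution of the half-line ODE), and imposing continuity at $0$ together with the jump condition $u'(0+)-u'(0-)=-\gamma u(0)$ and the identity $\varphi''_{\omega,\gamma}(0\pm)=\bigl(\tfrac{\gamma^2}{4}-1\bigr)\varphi_{\omega,\gamma}(0)$ forces the multiple to vanish. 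Either reproduce such an argument or state explicitly that you are taking the kernel characterization as an input (as the paper does, deferring it to Theorem \ref{ass3}); as written, this step of your proposal fails. A milder remark of the same kind: your claim that positivity of $\varphi_{\omega,\gamma}$ makes it the simple ground state of $\mathcal L^\gamma_2$ is true, but in the $\delta$-interaction setting the paper justifies the underlying oscillation argument with a Sturm-type lemma adapted to $\dom(\mathcal H^\delta_\gamma)$ rather than quoting the classical statement verbatim.
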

\begin{remark}
Analogous result holds for the case of the space $\widetilde{W}_{\rad}$. 
\end{remark}
Due to  \cite{GrilSha90}, the proof of this theorem requires verification of \textit{Assumptions 1,2,3}. 

  \textit{ Assumption 1 and Assumption 2} obviously hold: 
\begin{itemize}
\item well-posedness of equation \eqref{NLS_delta}  (Theorem \ref{exist}),
\item  the existence of a smooth curve of peak standing-wave $\omega \to \varphi_{\omega, \gamma}$ (see (\ref{gausson})).
\end{itemize}  
 Checking  \textit{Assumption 3} in \cite{GrilSha90}  is equivalent to  the following Theorem.
\begin{theorem}\label{ass3} Let $\gamma\neq 0$, then for any $\omega\in\mathbb{R}$ the following assertions hold.
\begin{itemize}
\item[$(i)$]  The operator $\mathcal H_{\omega, \gamma}$
 has only  a finite number of  negative eigenvalues.
\item[$(ii)$] The kernel of $\mathcal H_{\omega, \gamma}$  coincides with  $\Span\{i\varphi_{\omega,\gamma}\}$.
\item[$(iii)$] The rest of the spectrum of $\mathcal H_{\omega, \gamma}$ is positive and bounded away from zero.
\end{itemize}
\end{theorem}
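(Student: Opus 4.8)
The strategy is to transfer all three assertions to the two concrete Schrödinger operators $\mathcal L^\gamma_1$ and $\mathcal L^\gamma_2$ via the splitting \eqref{SBB}. Since $S_{\omega,\gamma}''(\varphi_{\omega,\gamma})(u,v)=B^\gamma_1(u_1,v_1)+B^\gamma_2(u_2,v_2)$ decouples the real and imaginary parts, the operator $\mathcal H_{\omega,\gamma}$ acting from $\widetilde W$ to $\widetilde W'$ is (unitarily equivalent to) the diagonal operator $\mathcal L^\gamma_1\oplus\mathcal L^\gamma_2$ in the sense of forms, so that $n(\mathcal H_{\omega,\gamma})=n(\mathcal L^\gamma_1)+n(\mathcal L^\gamma_2)$, $\ker\mathcal H_{\omega,\gamma}=\ker\mathcal L^\gamma_1\oplus i\ker\mathcal L^\gamma_2$, and the rest of the spectrum of $\mathcal H_{\omega,\gamma}$ is governed by the spectra of the $\mathcal L^\gamma_j$. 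By Theorem \ref{repres}, $\mathcal L^\gamma_j=-\frac{d^2}{dx^2}+(|x|+\frac{\gamma}{2})^2-(2j+1)$ on $D_\gamma$, and $\mathcal L^\gamma_1=\mathcal L^\gamma_2-2$. The first thing I would record is that each $\mathcal L^\gamma_j$ has compact resolvent: the potential $(|x|+\frac{\gamma}{2})^2$ tends to $+\infty$ as $|x|\to\infty$, so by a standard Rellich-type criterion (the form domain $D_\gamma$ embeds compactly into $L^2$) the spectrum of each $\mathcal L^\gamma_j$ is purely discrete, consisting of eigenvalues accumulating only at $+\infty$. This immediately gives item $(i)$ (only finitely many negative eigenvalues) and reduces item $(iii)$ to showing $0$ is not an accumulation point of the spectrum, which is automatic once $(ii)$ is established, since the spectrum is discrete.

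For item $(ii)$ the key point is to identify $\ker\mathcal L^\gamma_2$ and $\ker\mathcal L^\gamma_1$. I would first observe that $\varphi_{\omega,\gamma}$ itself solves $\mathcal L^\gamma_2\varphi_{\omega,\gamma}=0$: differentiating \eqref{eq2} shows that $\mathcal H^\delta_\gamma\varphi_{\omega,\gamma}+\omega\varphi_{\omega,\gamma}-\varphi_{\omega,\gamma}\log|\varphi_{\omega,\gamma}|^2=0$ rewrites exactly as $(-\frac{d^2}{dx^2}+(|x|+\frac{\gamma}{2})^2-1)\varphi_{\omega,\gamma}=0$ after substituting the explicit form \eqref{gausson} of the profile, so $\varphi_{\omega,\gamma}\in\ker\mathcal L^\gamma_2$. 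To show this spans the kernel, I would argue that $\varphi_{\omega,\gamma}>0$ everywhere, hence it is the ground state of $\mathcal L^\gamma_2$ (a positive eigenfunction of a Schrödinger operator is always the bottom eigenstate), so the eigenvalue $0$ is simple for $\mathcal L^\gamma_2$; therefore $\ker\mathcal L^\gamma_2=\Span\{\varphi_{\omega,\gamma}\}$. For $\mathcal L^\gamma_1=\mathcal L^\gamma_2-2$ this forces $\ker\mathcal L^\gamma_1=\{0\}$ unless $2$ is an eigenvalue of $\mathcal L^\gamma_2$; but since $0$ is the lowest eigenvalue of $\mathcal L^\gamma_2$, its second eigenvalue could a priori equal $2$, so this needs a short separate check — I would rule it out by noting that if $\mathcal L^\gamma_1 f=0$ with $f\neq 0$ then $f$ is an eigenfunction of $\mathcal L^\gamma_2$ with eigenvalue $2$; then I would either invoke the eigenvalue interlacing / nodal count for the one-dimensional operator with $\delta$-interaction, or simply appeal to the analysis of the deficiency vectors sketched in the remark after Theorem \ref{repres} (the parabolic cylinder function representation) to see that no $L^2$ solution of the corresponding ODE satisfying the jump condition exists. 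Combining, $\ker\mathcal H_{\omega,\gamma}=\{0\}\oplus i\Span\{\varphi_{\omega,\gamma}\}=\Span\{i\varphi_{\omega,\gamma}\}$, which is $(ii)$.

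Finally $(iii)$ follows: the spectrum of $\mathcal H_{\omega,\gamma}$ away from its (finitely many) negative eigenvalues and the simple kernel is a discrete set of positive eigenvalues, and by discreteness there is a positive gap separating $0$ from the next eigenvalue of $\mathcal L^\gamma_1$ and of $\mathcal L^\gamma_2$ (for $\mathcal L^\gamma_2$ the gap is the distance from $0$ to its second eigenvalue, which is strictly positive by simplicity of the ground state; for $\mathcal L^\gamma_1$ the positive part of the spectrum is $\{\lambda-2:\lambda\in\sigma(\mathcal L^\gamma_2),\ \lambda>2\}$, again a discrete set bounded away from $0$ once we know $2\notin\sigma(\mathcal L^\gamma_2)$). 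The main obstacle I anticipate is precisely the verification that the eigenvalue $0$ of $\mathcal L^\gamma_2$ is \emph{simple} in the presence of the $\delta$-interaction at the origin — the function $\varphi_{\omega,\gamma}$ is only $C^1$-broken there, so the usual ODE uniqueness argument (Sturm oscillation / Wronskian) must be run carefully on $\mathbb R_-$ and $\mathbb R_+$ separately and then matched through the jump condition $f'(0+)-f'(0-)=-\gamma f(0)$; equivalently, one must show that the second linearly independent solution of the defining ODE that decays at one end fails either to decay at the other end or to satisfy the jump relation. Everything else — compactness of the resolvent, positivity of $\varphi_{\omega,\gamma}$, the algebraic bookkeeping $n(\mathcal H_{\omega,\gamma})=n(\mathcal L^\gamma_1)+n(\mathcal L^\gamma_2)$ — is routine.
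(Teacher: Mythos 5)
Your overall architecture is the same as the paper's: decompose $\mathcal H_{\omega,\gamma}$ into $\mathcal L^\gamma_1\oplus\mathcal L^\gamma_2$ via \eqref{spec0}, get discreteness of the spectra from the confining potential $(|x|+\tfrac{\gamma}{2})^2\to+\infty$ (which yields $(i)$ and reduces $(iii)$ to the kernel identification), and then compute the kernels. That part is fine. The genuine gap is in item $(ii)$, at exactly the step you leave to ``a short separate check'': the proof that $\ker\mathcal L^\gamma_1=\{0\}$, equivalently that $2\notin\sigma_p(\mathcal L^\gamma_2)$, is never actually carried out. Of the two routes you suggest, the eigenvalue-interlacing/nodal-count route cannot settle this by itself: oscillation theory tells you how many zeros an eigenfunction with eigenvalue $2$ would have, but it does not exclude that $2$ is an eigenvalue (indeed for $\gamma=0$ it \emph{is} one, with eigenfunction $\varphi'_{\omega,0}$, so any argument must use $\gamma\neq0$ through the jump condition in an essential way). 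The second route (the parabolic-cylinder/deficiency-vector analysis) points in the right direction but is only gestured at, and the remark after Theorem \ref{repres} that you appeal to explicitly describes that computation as laborious and does not contain the needed conclusion.

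The paper's proof does this step concretely and cheaply: since $\varphi_{\omega,\gamma}$ solves the $\mathcal L^\gamma_2$-equation on each half-line, differentiating shows $\varphi'_{\omega,\gamma}$ solves $-u''+((|x|+\tfrac{\gamma}{2})^2-3)u=0$ there; the equation is limit point at $\pm\infty$, so any $L^2$ element of $\ker\mathcal L^\gamma_1$ must equal $\mu\varphi'_{\omega,\gamma}$ on $(0,\infty)$ and $\nu\varphi'_{\omega,\gamma}$ on $(-\infty,0)$; continuity at the origin forces $\nu=-\mu$, and then inserting $u'(0\pm)=\pm\mu(\tfrac{\gamma^2}{4}-1)\varphi_{\omega,\gamma}(0)$ and $u(0)=-\tfrac{\mu\gamma}{2}\varphi_{\omega,\gamma}(0)$ into the jump condition $u'(0+)-u'(0-)=-\gamma u(0)$ gives $2\mu(\tfrac{\gamma^2}{4}-1)=\tfrac{\mu\gamma^2}{2}$, hence $\mu=0$ for $\gamma\neq0$. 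This half-line-matching computation is the actual content of $(ii)$, and your proposal needs it (or an equivalent) spelled out. A smaller point: you identify the ``main obstacle'' as the simplicity of the zero eigenvalue of $\mathcal L^\gamma_2$, but that is the easy part — limit-point behavior at $\pm\infty$ gives a one-dimensional space of decaying solutions on each half-line, and the two matching conditions at $x=0$ leave at most a one-dimensional eigenspace (the paper disposes of all eigenvalue simplicity with a citation to Berezin--Shubin); the real work sits in $\mathcal L^\gamma_1$, precisely where your argument is incomplete.
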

\noindent This Theorem will be proved  below. From  \eqref{SBB} we can define formally 
 \begin{equation}\label{spec0}
 \mathcal H_{\omega, \gamma} u= \mathcal L^\gamma_{1}u_1+i \mathcal L^\gamma_{2}u_2,
\end{equation}
where $u_1=\re(u),\,u_2=\im(u)$.
In connection with  Theorem \ref{st0} and Theorem \ref{ass3} our aim is to  investigate   the
following three spectral conditions associated to  $\mathcal L_1^\gamma$ and  $\mathcal L_2^\gamma$ :
\begin{enumerate}
\item[$\bullet$] the  operator $\mathcal L_2^\gamma$ has $\ker(\mathcal L^\gamma_{2})=\Span\{\varphi_{\omega, \gamma}\}$ and $\inf (\sigma(\mathcal L^\gamma_{2})\setminus\{0\})>\varepsilon>0$;
\item[$\bullet$] the  operator $\mathcal L^\gamma_{1}$ 
has a trivial kernel for all $\gamma\in \mathbb R\setminus\{0\}$, and  $\inf (\sigma(\mathcal L^\gamma_{1})\cap\mathbb{R}_+)>\varepsilon>0$, while $\sigma(\mathcal L^\gamma_{1})\cap\mathbb{R}_{-}=\{\lambda_k\}_{k=1}^n,$ where $n<\infty$;
\item[$\bullet$] the number of negative eigenvalues of the operator $\mathcal L^\gamma_{1}$.
\end{enumerate}
These three conditions will be studied in the next Subsection. The main difficulty  is to count the number of negative eigenvalues of $\mathcal L^\gamma_{1}$. We use two specific approaches to do this. For $\gamma>0$ we apply exclusively the theory of extensions of symmetric operators. In the case  $\gamma<0$, we  consider  $\mathcal L^\gamma_{1}$  as a  real-holomorphic perturbation of the one-dimensional harmonic oscillator operator
\begin{equation}\label{osci}
 \mathcal L^0_{1} =-\frac{d^2}{dx^2}+ x^2-3,\quad \dom(\mathcal L^0_{1})=\{f\in H^2(\mathbb{R}): x^2f\in L^2(\mathbb{R})\}. 
\end{equation} 
Using the perturbation theory, we claim that the point spectrum of $\mathcal L^\gamma_{1}$  depends holomorphically on the spectrum of  $\mathcal L^0_{1}$. In particular,  for $\gamma<0$  we show the equality $n(\mathcal L^\gamma_{1})=2$, while  for $\gamma>0$  we obtain $n(\mathcal L^\gamma_{1})=1$. Moreover, we show that  $n(\mathcal L^\gamma_{1})=1$ in the space $\widetilde{W}_{\rad}$ for any $\gamma\in\mathbb{R}\setminus \{0\}$.

\subsection{Spectral properties of $\mathcal L^{\gamma}_1$ and $\mathcal L^{\gamma}_2$}\label{sub3.3}
Below we discuss the spectral properties of $\mathcal L^{\gamma}_j,\,j\in\{1,2\}$. Let us make few general observations. First, since  
$$
\lim\limits_{|x|\to +\infty} \left(|x|+\frac{\gamma}{2}\right)^2=+\infty,
$$
 the operators $\mathcal L^\gamma_j, \,\,j\in\{1,2\}$,  have a discrete spectrum, $\sigma(\mathcal L^\gamma_j)=\sigma_p(\mathcal L^\gamma_j)=\{\lambda_{k}^j\}_{k\in\mathbb{N}}$ (see \cite[Chapter II]{BerShu91}). In particular, we have the following distribution of the eigenvalues
$$
\lambda^j_0<\lambda^j_1<\cdot\cdot\cdot<\lambda^j_k<\cdot\cdot\cdot,
$$
with $\lambda_k^j\to +\infty$ as $k\to +\infty$. Due to semi-boundedness of $V^1_\gamma=(|x|+\frac{\gamma}{2})^2-3$ and $V^2_\gamma=(|x|+\frac{\gamma}{2})^2-1$,  we obtain that any nontrivial solution of the equation
$$
\mathcal L^\gamma_j v=\lambda_k^j v, \qquad v\in \dom(\mathcal L^\gamma_j),
$$
 is unique up to a constant factor (see \cite{BerShu91}). Therefore, each eigenvalue $\lambda^j_{k}$ is simple.  
Moreover, the following Proposition holds.

\begin{proposition}
Let $\gamma \in \mathbb R\setminus\{0\}$.  Then $\ker(\mathcal H_{\omega, \gamma})=\Span\{i\varphi_{\omega, \gamma}\}$.
\end{proposition}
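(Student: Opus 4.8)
The plan is to reduce the claim about $\ker(\mathcal H_{\omega,\gamma})$ to separate statements about the kernels of the two self-adjoint operators $\mathcal L^\gamma_1$ and $\mathcal L^\gamma_2$, using the block decomposition \eqref{spec0}. Indeed, since $\mathcal H_{\omega,\gamma}u=\mathcal L^\gamma_1 u_1+i\mathcal L^\gamma_2 u_2$ with $u_1=\re u$, $u_2=\im u$, and since $\mathcal L^\gamma_1,\mathcal L^\gamma_2$ act on real and imaginary parts independently, one has $u\in\ker(\mathcal H_{\omega,\gamma})$ if and only if $u_1\in\ker(\mathcal L^\gamma_1)$ and $u_2\in\ker(\mathcal L^\gamma_2)$. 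Hence it suffices to prove: (a) $\ker(\mathcal L^\gamma_2)=\Span\{\varphi_{\omega,\gamma}\}$, and (b) $\ker(\mathcal L^\gamma_1)=\{0\}$. Granting both, $\ker(\mathcal H_{\omega,\gamma})=\{0\}+i\Span\{\varphi_{\omega,\gamma}\}=\Span\{i\varphi_{\omega,\gamma}\}$, which is the assertion.

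For (a), I would first check that $\varphi_{\omega,\gamma}\in\ker(\mathcal L^\gamma_2)$: differentiating \eqref{eq2} (equivalently, using that $\varphi_{\omega,\gamma}$ is a critical point of $S_{\omega,\gamma}$) shows that $\mathcal L^\gamma_2\varphi_{\omega,\gamma}=-\varphi_{\omega,\gamma}''+((|x|+\tfrac{\gamma}{2})^2-1)\varphi_{\omega,\gamma}=0$, since on each half-line $-\varphi''+\omega\varphi-\varphi\log|\varphi|^2=0$ and $\log|\varphi_{\omega,\gamma}|^2=\omega+1-(|x|+\tfrac{\gamma}{2})^2$, and the jump condition is exactly the one encoded in $\dom(\mathcal H^\delta_\gamma)$. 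Conversely, since $V^\gamma_2=(|x|+\tfrac{\gamma}{2})^2-1\to+\infty$, the operator $\mathcal L^\gamma_2$ has discrete spectrum with simple eigenvalues (as recalled just above the Proposition, via semiboundedness of the potential), so $\dim\ker(\mathcal L^\gamma_2)\le 1$; combined with $\varphi_{\omega,\gamma}\ne 0$ this gives equality.

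For (b), the key observation is that $\varphi_{\omega,\gamma}>0$ everywhere, so it is the ground state of $\mathcal L^\gamma_2$, i.e. $\lambda^2_0=0$ and all other eigenvalues of $\mathcal L^\gamma_2$ are strictly positive. By Remark (ii) after Theorem \ref{repres}, $\mathcal L^\gamma_1=\mathcal L^\gamma_2-2$, so $\sigma(\mathcal L^\gamma_1)=\{\lambda^2_k-2\}_{k\ge0}$ with $\lambda^2_0-2=-2<0$. In particular $0\in\sigma(\mathcal L^\gamma_1)$ would force $\lambda^2_k=2$ for some $k\ge1$; this is not a priori excluded by the shift alone, so I would instead argue that $0$ is not an eigenvalue of $\mathcal L^\gamma_1$ by ruling out an $L^2$-solution of $-f''+((|x|+\tfrac\gamma2)^2-3)f=0$ in $\dom(\mathcal H^\delta_\gamma)$ with the correct jump — equivalently, using that $\mathcal L^\gamma_1$ having a kernel would be one of the three spectral conditions listed after Theorem \ref{ass3}, proved in this subsection by the extension-theoretic argument ($\gamma>0$) and the analytic perturbation argument ($\gamma<0$). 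So the honest route is: cite (or invoke, since this subsection establishes it) that $\mathcal L^\gamma_1$ has trivial kernel for all $\gamma\neq0$, and conclude.

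The main obstacle is precisely step (b): showing $0\notin\sigma_p(\mathcal L^\gamma_1)$ is not a soft consequence of the shift $\mathcal L^\gamma_1=\mathcal L^\gamma_2-2$ and genuinely requires the spectral analysis of this subsection (the parabolic-cylinder-function description of the deficiency elements for $\gamma>0$, or the holomorphic perturbation of the harmonic oscillator $\mathcal L^0_1$ for $\gamma<0$, where the relevant eigenvalue of $\mathcal L^0_1$ is $2\cdot 1-2=0$ only at $\gamma=0$ and moves away for $\gamma\ne0$). Step (a) and the block reduction are routine. I would therefore structure the proof as: reduce to (a) and (b) via \eqref{spec0}; dispatch (a) by exhibiting $\varphi_{\omega,\gamma}$ in the kernel and invoking simplicity of eigenvalues; and for (b) appeal to the triviality of $\ker(\mathcal L^\gamma_1)$ established (case by case in $\gamma$) in the present subsection.
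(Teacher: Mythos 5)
Your block reduction via \eqref{spec0} and your step (a) are exactly the paper's: $\varphi_{\omega,\gamma}\in D_\gamma$ with $\mathcal L^\gamma_2\varphi_{\omega,\gamma}=0$, plus simplicity of the eigenvalues of these confining-potential operators, gives $\ker(\mathcal L^\gamma_2)=\Span\{\varphi_{\omega,\gamma}\}$. The genuine gap is step (b), and it is not repaired by the citation you propose. In the paper the triviality of $\ker(\mathcal L^\gamma_1)$ is \emph{proved in this very Proposition}; the later spectral analysis of the subsection does not establish it and in part presupposes it. The extension-theoretic argument for $\gamma>0$ (Proposition \ref{num}: $\mathcal L_0\geq 0$, deficiency indices $(1,1)$, Proposition \ref{semibounded}) only bounds the number of \emph{strictly negative} eigenvalues by one and says nothing about $0$ being an eigenvalue; the analytic perturbation argument controls the second eigenvalue $\Pi(\gamma)$ only for small $|\gamma|$; and the continuation argument of Proposition \ref{spec6a} begins with ``Recall that $\ker(\mathcal L^\gamma_1)=\{0\}$ for $\gamma\neq 0$'', i.e.\ it takes the present Proposition as input. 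So deferring (b) to those results is circular, and your write-up contains no actual proof that $0\notin\sigma_p(\mathcal L^\gamma_1)$.

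The missing idea (which you gesture at with ``ruling out an $L^2$-solution with the correct jump'' but do not execute) is an elementary ODE argument. If $u\in\ker(\mathcal L^\gamma_1)$, then on each half-line $u$ solves $-u''+\big((|x|+\tfrac{\gamma}{2})^2-3\big)u=0$; differentiating the half-line equation $-\varphi_{\omega,\gamma}''+\big((|x|+\tfrac{\gamma}{2})^2-1\big)\varphi_{\omega,\gamma}=0$ shows that $\varphi_{\omega,\gamma}'$ solves this equation, and since the space of $L^2$-solutions on a half-line is one-dimensional, $u=\mu\varphi_{\omega,\gamma}'$ on $(0,\infty)$ and $u=\nu\varphi_{\omega,\gamma}'$ on $(-\infty,0)$. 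Continuity at the origin, where $\varphi_{\omega,\gamma}'(0\pm)=\mp\tfrac{\gamma}{2}\varphi_{\omega,\gamma}(0)\neq 0$, forces $\nu=-\mu$ and $u(0)=-\tfrac{\mu\gamma}{2}\varphi_{\omega,\gamma}(0)$; the half-line equation gives $u'(0\pm)=\pm\mu\big(\tfrac{\gamma^2}{4}-1\big)\varphi_{\omega,\gamma}(0)$; and the jump condition $u'(0+)-u'(0-)=-\gamma u(0)$ then reads $2\mu\big(\tfrac{\gamma^2}{4}-1\big)\varphi_{\omega,\gamma}(0)=\tfrac{\mu\gamma^2}{2}\varphi_{\omega,\gamma}(0)$, i.e.\ $-2\mu=0$, so $u\equiv 0$. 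This self-contained computation is what makes the Proposition independent of, and indeed a prerequisite for, the eigenvalue counts you wanted to cite.
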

\begin{proof}
 Since $\varphi_{\omega, \gamma}\in D_\gamma$ and  $\mathcal L^\gamma_2 \varphi_{\omega, \gamma}=0$, we obtain  immediately  $\ker(\mathcal L^\gamma_2)=\Span\{\varphi_{\omega, \gamma}\}$.
Now,  suppose that $u\in\ker(\mathcal L^\gamma_{1})$ and $u\neq 0$. It means that $u\in H^1(\mathbb R)\cap H^2(\mathbb R\setminus\{0\})$  and  
 \begin{gather}\label{ker_L1} 
 -u''+((|x|+\tfrac{\gamma}{2})^2-3)u=0,\,\,x\neq 0,\\\label{ker_L1'}
   u'(0+)-u'(0-)=-\gamma u(0).
 \end{gather} 
 Consider  \eqref{ker_L1}  on $(0, \infty)$. Then, the fact that $\ker(\mathcal L^\gamma_2)=\Span\{\varphi_{\omega, \gamma}\}$ implies 
 \begin{equation}\label{ker_L2}
 -\varphi_{\omega, \gamma}''+((|x|+\tfrac{\gamma}{2})^2-1)\varphi_{\omega, \gamma}=0\quad \text{on}\quad (0, \infty).
 \end{equation}
 Differentiating \eqref{ker_L2}, we obtain that $\varphi_{\omega, \gamma}'$ satisfies \eqref{ker_L1} on $(0, \infty)$. Since we look for $L^2(\mathbb{R})$-solution, every solution of  \eqref{ker_L1} in $(0, \infty)$ is of the form $\mu\varphi_{\omega, \gamma}', \mu\in\mathbb{R}$ \cite[Chapter II]{BerShu91}. Analogously, every solution in    $(-\infty,0)$ is given by $\nu\varphi_{\omega, \gamma}', \nu\in\mathbb{R}$.  Thus, the solution $u$ of \eqref{ker_L1}-\eqref{ker_L1'} has the form 
 $$
 u=\left\{\begin{array}{lc}-\mu\varphi_{\omega, \gamma}',\,\,\;\; x\in(-\infty,0),\\
\mu\varphi_{\omega, \gamma}',\,\,\;\;\;\;x\in(0,\infty).\\
\end{array}\right.
$$ 
Since $u\in H^1(\mathbb{R})$ and $\varphi_{\omega, \gamma}$ satisfies condition    \eqref{ker_L1'}, we get
\begin{equation*}\label{u0}
u(0)=-\mu\varphi_{\omega, \gamma}'(0-)=\mu\varphi_{\omega, \gamma}'(0+)=-\tfrac{\mu}{2}\gamma\varphi_{\omega, \gamma}(0).
\end{equation*}
On the other hand, the fact that $\varphi_{\omega, \gamma}$ satisfies \eqref{ker_L2} implies 
\begin{equation*}\label{u'0}
u'(0\pm)=\pm\mu\varphi_{\omega, \gamma}''(0\pm)=\pm\mu(\tfrac{\gamma^2}{4}-1)\varphi_{\omega, \gamma}(0).
\end{equation*}
Finally, using \eqref{ker_L1'}, we arrive at
$$2\mu(\tfrac{\gamma^2}{4}-1)\varphi_{\omega, \gamma}(0)=\tfrac{\mu}{2}\gamma^2\varphi_{\omega, \gamma}(0).$$
This is a contradiction, therefore $\mu=0$ and $u\equiv 0.$ The equality  $\ker(\mathcal H_{\omega, \gamma})=\Span\{i\varphi_{\omega, \gamma}\}$ follows from   \eqref{spec0}.
 \end{proof}
\noindent Summarizing the above facts we arrive at   \textit{the proof of Lemma \ref{ass3}}.
  
\noindent The following result implies non-negativity of the operator $\mathcal L^\gamma_2$.
  \begin{proposition}\label{nL2}
Let $\gamma \in \mathbb R\setminus\{0\}$. Then  $n(\mathcal L^\gamma_2)=0$. 
  \end{proposition}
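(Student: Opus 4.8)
The plan is to show that $\mathcal{L}_2^\gamma$ is a non-negative operator by proving that its ground-state eigenvalue $\lambda_0^2$ equals zero. The key observation is that we have already identified an eigenfunction with eigenvalue $0$: indeed $\mathcal{L}_2^\gamma \varphi_{\omega,\gamma} = 0$, as follows from equation \eqref{eq2} together with the explicit form \eqref{gausson} of $\varphi_{\omega,\gamma}$. So $0 \in \sigma_p(\mathcal{L}_2^\gamma)$, and it remains to argue that $0$ is in fact the \emph{smallest} eigenvalue; since each eigenvalue is simple and the spectrum is discrete (as recorded in the preamble to this subsection), this would give $\sigma(\mathcal{L}_2^\gamma) \subset [0,\infty)$ and hence $n(\mathcal{L}_2^\gamma) = 0$.

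The main step is therefore a Perron–Frobenius / nodal-domain type argument. First I would recall the standard fact for Schr\"odinger operators of this type (harmonic-oscillator type potential with an attractive or repulsive $\delta$-interaction at the origin) that the eigenfunction $\psi_0$ associated with the lowest eigenvalue $\lambda_0^2$ can be chosen strictly positive on $\mathbb{R}$ — this is the ground-state positivity property, which holds because the associated semigroup is positivity improving, or equivalently because a ground state cannot change sign (if it did, replacing it by its absolute value would not increase the quadratic form $B_2^\gamma$, contradicting simplicity unless the sign change is spurious). Then, since $\varphi_{\omega,\gamma} > 0$ everywhere on $\mathbb{R}$ and $\varphi_{\omega,\gamma} \in D_\gamma = \dom(\mathcal{L}_2^\gamma)$ with $\mathcal{L}_2^\gamma \varphi_{\omega,\gamma} = 0$, the function $\varphi_{\omega,\gamma}$ is a positive eigenfunction. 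But a positive function cannot be orthogonal to the strictly positive ground state $\psi_0$: $(\varphi_{\omega,\gamma}, \psi_0) > 0$. Since eigenfunctions corresponding to distinct eigenvalues of a self-adjoint operator are orthogonal, $\varphi_{\omega,\gamma}$ and $\psi_0$ must belong to the \emph{same} eigenvalue, i.e. $\lambda_0^2 = 0$. As all eigenvalues satisfy $\lambda_0^2 < \lambda_1^2 < \cdots$, this forces $\sigma(\mathcal{L}_2^\gamma) \subset [0,\infty)$, so $n(\mathcal{L}_2^\gamma) = 0$.

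I expect the one delicate point to be justifying the ground-state positivity in the present setting, because the domain of $\mathcal{L}_2^\gamma$ involves the $\delta$-interaction jump condition $f'(0+) - f'(0-) = -\gamma f(0)$, which for the repulsive case $\gamma < 0$ is not obviously compatible with the usual positivity-preserving structure. The cleanest route is at the level of the quadratic form: $B_2^\gamma(f,f) = \|f'\|_2^2 + \int (|x|+\tfrac{\gamma}{2})^2 |f|^2\,dx - \gamma |f(0)|^2$ satisfies $B_2^\gamma(|f|, |f|) \leq B_2^\gamma(f,f)$ for real-valued $f$ (here the boundary term is unchanged under $f \mapsto |f|$, and $\| |f|' \|_2 = \|f'\|_2$), so if $\psi_0$ is a minimizer then so is $|\psi_0|$, and by simplicity of $\lambda_0^2$ we get $\psi_0 = \pm|\psi_0|$, i.e. $\psi_0$ has a constant sign; strict positivity on $\mathbb{R}$ then follows from the fact that $\psi_0$ solves a second-order ODE on each half-line and cannot vanish on an interval, combined with Harnack-type/unique-continuation reasoning or directly from the explicit parabolic-cylinder-function description already used in the preceding remark. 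Alternatively, and perhaps most economically for the paper, one may simply invoke the general Perron–Frobenius theorem for Schr\"odinger-type operators bounded below with discrete spectrum (e.g. \cite{BerShu91}), whose hypotheses are met here, to conclude directly that the lowest eigenfunction is strictly positive, and then run the orthogonality argument above.
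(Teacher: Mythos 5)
Your argument is correct, but it is a genuinely different route from the paper's. The paper deduces $n(\mathcal L^\gamma_2)=0$ from the strict positivity of $\varphi_{\omega,\gamma}$ together with a Sturm-type oscillation lemma adapted to the $\delta$-interaction (Lemma \ref{Oscill}): eigenfunctions of the same operator with ordered eigenvalues have strictly increasing numbers of zeros, the key technical point being a Wronskian/integration-by-parts identity in which the boundary contribution at $x=0$ cancels because both eigenfunctions satisfy the same jump condition; a negative eigenvalue would then force $\varphi_{\omega,\gamma}$ (eigenvalue $0$) to vanish somewhere, a contradiction. You instead run the variational Perron--Frobenius argument: the diamagnetic-type inequality $B^\gamma_2(|f|,|f|)\le B^\gamma_2(f,f)$ (the boundary term $-\gamma|f(0)|^2$ being invariant under $f\mapsto|f|$) plus simplicity gives a sign-definite ground state, and since two positive functions cannot be $L^2$-orthogonal, the positive eigenfunction $\varphi_{\omega,\gamma}$ must itself be the ground state, so $\lambda_0^2=0$. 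Both proofs ultimately rest on positivity of $\varphi_{\omega,\gamma}$; yours is arguably more economical at the form level, but the one point you only sketch — strict positivity of the ground state — does need the short ODE argument at the origin: if $\psi_0(0)=0$ the jump condition degenerates to $\psi_0'(0+)=\psi_0'(0-)$, nonnegativity forces both one-sided derivatives to vanish, and uniqueness for the second-order ODE on each half-line then kills $\psi_0$; vanishing at a point (not on an interval) is what must be excluded. Note also that the paper's choice has a side benefit you would lose: Lemma \ref{Oscill} is reused later, in the proof of item $(iii)$ of Proposition \ref{spec6a} for $\gamma<0$, so establishing the oscillation lemma once pays off twice.
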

    The proof of Proposition \ref{nL2}  follows from positivity of $\varphi_{\omega,\gamma}$ and  the following generalization of the classical Sturm oscillation theorem to the case of point interaction  (see \cite{BerShu91}).
\begin{lemma}\label{Oscill}
Let $V(x)$ be a real-valued continuous function on $\mathbb{R}$.
Let also $\varphi_1, \varphi_2\in L^2(\mathbb{R})$ be eigenfunctions of the operator
\begin{equation*}
L_V=-\frac{d^2}{dx^2}+V(x),\quad
\dom(L_V)=\left\{f\in \dom(\mathcal{H}_\gamma^\delta): L_Vf\in L^2(\mathbb{R})\right\},
\end{equation*}
corresponding to the eigenvalues $\lambda_1<\lambda_2$
 respectively. Suppose that $n_1$  and $n_2$ are the number of zeroes of $\varphi_1, \varphi_2$ respectively. Then $n_2>n_1$.
\end{lemma}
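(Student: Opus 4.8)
The plan is to run the classical Sturm comparison argument, the one new ingredient being that the $\delta$-interaction at the origin leaves the Wronskian identity intact because $\varphi_1$ and $\varphi_2$ satisfy the \emph{same} matching condition there. Set $W=\varphi_1\varphi_2'-\varphi_1'\varphi_2$; off the origin $\varphi_j''=(V-\lambda_j)\varphi_j$, so $W'=(\lambda_1-\lambda_2)\varphi_1\varphi_2$ on $\mathbb{R}\setminus\{0\}$. First I would record three preliminary facts. \emph{(a) Simplicity of zeros:} if $\varphi_j(x_0)=\varphi_j'(x_0)=0$ at some $x_0\ne 0$, uniqueness for the linear ODE forces $\varphi_j\equiv 0$; and if $\varphi_j(0)=0$ then $\varphi_j'(0+)-\varphi_j'(0-)=-\gamma\varphi_j(0)=0$, so $\varphi_j$ is $C^1$ at $0$ and the same argument applies on each half-line. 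Hence all zeros are simple and isolated, and, there being finitely many of them, when $n_1\ge 1$ there is a least zero $z_-$ and a greatest zero $z_+$ of $\varphi_1$. \emph{(b) Continuity of $W$ across $0$:} using $\varphi_j'(0+)-\varphi_j'(0-)=-\gamma\varphi_j(0)$,
\[
W(0+)-W(0-)=\varphi_1(0)\bigl(\varphi_2'(0+)-\varphi_2'(0-)\bigr)-\bigl(\varphi_1'(0+)-\varphi_1'(0-)\bigr)\varphi_2(0)=0,
\]
since each bracket equals $-\gamma\,\varphi_j(0)$ and the two resulting terms cancel. \emph{(c) Decay at $\pm\infty$:} since $\varphi_1,\varphi_2\in\dom(\mathcal H_\gamma^\delta)\subset H^1(\mathbb{R})$, both $\varphi_j$ and $\varphi_j'$ lie in $L^2(\mathbb{R})$, whence $W\in L^1(\mathbb{R})$ and $W'=(\lambda_1-\lambda_2)\varphi_1\varphi_2\in L^1(\mathbb{R})$; together with (b) this puts $W\in W^{1,1}(\mathbb{R})\hookrightarrow C_0(\mathbb{R})$, so $W(x)\to 0$ as $x\to\pm\infty$.

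Next I would carry out the comparison on the bounded gaps. On any interval where $\varphi_1\varphi_2>0$ one has $W'<0$ (because $\lambda_1<\lambda_2$), and by (b) this strict monotonicity of $W$ persists across the origin. Let $a<b$ be consecutive zeros of $\varphi_1$ and suppose, for contradiction, that $\varphi_2$ has no zero in $(a,b)$; after a sign change we may assume $\varphi_1>0$ and $\varphi_2>0$ on $(a,b)$. Simplicity of the zeros of $\varphi_1$ gives $\varphi_1'(a)>0$ and $\varphi_1'(b)<0$, so
\[
W(a)=-\varphi_1'(a)\varphi_2(a)\le 0,\qquad W(b)=-\varphi_1'(b)\varphi_2(b)\ge 0,
\]
contradicting that $W$ is strictly decreasing on $[a,b]$. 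Thus $\varphi_2$ vanishes somewhere in each of the $n_1-1$ open gaps between consecutive zeros of $\varphi_1$.

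Then I would handle the two unbounded tails, which also settles the case $n_1=0$. On $(-\infty,z_-)$ take $\varphi_1>0$ and suppose $\varphi_2>0$; then $W$ is strictly decreasing on $(-\infty,z_-)$ while $W\to 0$ at $-\infty$ by (c), forcing $W(z_-)<0$, whereas $W(z_-)=-\varphi_1'(z_-)\varphi_2(z_-)\ge 0$ by simplicity — a contradiction. So $\varphi_2$ has a zero in $(-\infty,z_-)$, and symmetrically one in $(z_+,\infty)$; run on all of $\mathbb{R}$, the same argument produces a zero of $\varphi_2$ when $\varphi_1$ has none. Since the $n_1-1$ bounded gaps and the two tails are pairwise disjoint open intervals, $\varphi_2$ has at least $n_1+1$ distinct zeros when $n_1\ge 1$ and at least $1$ when $n_1=0$; in either case $n_2\ge n_1+1>n_1$.

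The main obstacle — indeed the only point requiring more than textbook Sturm theory — is fact (b): it is precisely because $\varphi_1$ and $\varphi_2$ obey the \emph{same} $\delta$-condition that the would-be jump of $W$ at the origin cancels, allowing all the one-sided comparison estimates to be glued through $x=0$. Once (b) is available, the remaining steps are the standard comparison-of-Wronskians argument, and the decay (c) is a soft consequence of the inclusion $\dom(\mathcal H_\gamma^\delta)\subset H^1(\mathbb{R})$.
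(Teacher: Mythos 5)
Your proposal is correct and follows essentially the same route as the paper: both rest on the Sturm comparison via the Wronskian $W=\varphi_1\varphi_2'-\varphi_1'\varphi_2$, with the decisive observation that the common $\delta$-condition at the origin makes the boundary contribution there vanish (your fact (b) is exactly the paper's cancellation $\left[\varphi_1'\varphi_2-\varphi_1\varphi_2'\right]_{0-}^{0+}=0$), and with the tails handled through decay at infinity. The only differences are presentational — you use the differential form (strict monotonicity of $W$, plus the $W^{1,1}\hookrightarrow C_0$ argument for decay) where the paper integrates Green's identity over $(a,b)$ and treats the behavior at $\pm\infty$ more informally.
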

\begin{proof}  Suppose that $\varphi_1(a)=\varphi_1(b)=0$ and $-\infty< a<0<b\leq\infty$, besides  $\varphi_1(\infty)=0$ is understood in the sense of limit.   Let also $\varphi_1>0$ in $(a, b)$. Then $\varphi'_1(a)>0$ and $\varphi'_1(b)\leq 0$.  The  "equality" $\varphi'_1(b)=0$ takes place only if $b=\infty$ since $\varphi_1\in H^2(0,\infty)$.
Suppose that $\varphi_2$  has no zeros in $(a,b)$   and $\varphi_2>0$ in $(a,b)$.
Using the fact that $\varphi_1, \varphi_2$ are eigenfunctions of $L_V$, we arrive at
\begin{equation}\label{eigen}
\begin{split}
&0=\int\limits^b_a(\varphi_1\varphi''_2-\varphi''_1\varphi_2) dx+\int\limits^b_a(\lambda_2-\lambda_1)\varphi_1\varphi_2 dx \\&=\int\limits^{-0}_a\frac{d}{dx}(\varphi_1\varphi'_2-\varphi'_1\varphi_2) dx+\int\limits^b_{+0}\frac{d}{dx}(\varphi_1\varphi'_2-\varphi'_1\varphi_2) dx+ \int\limits^b_a(\lambda_2-\lambda_1)\varphi_1\varphi_2 dx\\
&=\left[\varphi_1\varphi'_2-\varphi'_1\varphi_2\right]^b_a+\left[\varphi'_1\varphi_2-\varphi_1\varphi'_2\right]^{+0}_{-0}+\int\limits^b_a(\lambda_2-\lambda_1)\varphi_1\varphi_2 dx.
\end{split}
\end{equation}
Since $\varphi_1, \varphi_2\in \dom(\mathcal{H}_\gamma^\delta)$, we get  $\left[\varphi'_1\varphi_2-\varphi_1\varphi'_2\right]^{+0}_{-0}=0$. Therefore, from \eqref{eigen} and initial assumptions it  easily follows that
\[0>\left[\varphi_1\varphi'_2-\varphi'_1\varphi_2\right]^b_a=\varphi'_1(a)\varphi_2(a)-\varphi'_1(b)\varphi_2(b)>0,\]
which is a contradiction. Thus, $\varphi_2$ has at least one zero in $(a,b)$. Analogously, we can prove that  there  exists $\xi\in(-\infty, a]$ such that $\varphi_2(\xi)=0$. 	
Thereby,  between two  finite zeroes of $\varphi_1$ there exists a
zero of $\varphi_2$, and between the last finite zero of $\varphi_1$ and $\infty$ (between the first finite zero of  $\varphi_1$ and $-\infty$ respectively) there is  at least one zero of $\varphi_2$. The proof is completed.
\end{proof}

 \begin{remark}\label{morse}
Note that from $\ker(\mathcal L^\gamma_{2})=\Span\{\varphi_{\omega, \gamma}\}$ and $\inf (\sigma(\mathcal L^\gamma_{2})\setminus\{0\})>\varepsilon>0$ it follows that $n(\mathcal H_{\omega, \gamma})=n(\mathcal L^\gamma_{1})$.
\end{remark}
\vspace{0.5cm} 
\noindent{\bf The number of negative eigenvalues  $n(\mathcal L^\gamma_1)$ for $\gamma >0$} 

\noindent Below  we will show the following result.
\begin{proposition}\label{num} Let  $\gamma>0$, then $n(\mathcal L^\gamma_1)= 1$. Moreover, for $\mathcal L^\gamma_1$ restricted to  $\widetilde{W}_{\rad}$  we also have $n(\mathcal L^\gamma_1)= 1$.  In particular, the unique  negative simple eigenvalue equals $-2$, and  $\varphi_{\omega,\gamma}$ is the corresponding eigenfunction.  
\end{proposition}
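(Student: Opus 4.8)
The plan is to derive everything from the identity $\mathcal L^\gamma_1=\mathcal L^\gamma_2-2$ together with a comparison of $\mathcal L^\gamma_1$ with the Friedrichs extension of the underlying minimal symmetric operator. I would first record the lower bound $n(\mathcal L^\gamma_1)\ge 1$: by Proposition \ref{nL2} one has $\mathcal L^\gamma_2\ge 0$, and (as shown above) $\ker(\mathcal L^\gamma_2)=\Span\{\varphi_{\omega,\gamma}\}$, while every eigenvalue of $\mathcal L^\gamma_1$ is simple. Hence $\min\sigma(\mathcal L^\gamma_1)=\min\sigma(\mathcal L^\gamma_2)-2=-2$, this value is attained simply at $\varphi_{\omega,\gamma}\in D_\gamma$, so $-2$ is a negative eigenvalue with eigenfunction $\varphi_{\omega,\gamma}$. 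Everything then reduces to showing that the second eigenvalue $\lambda^1_1$ of $\mathcal L^\gamma_1$ is $\ge 0$.

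For this upper bound I would argue by extension theory. Let $\mathcal L_0$ be the minimal operator (the restriction of $\mathcal L^\gamma_1$ to $\{f\in D_\gamma:\ f(0)=0\}$), which is symmetric with deficiency indices $(1,1)$; note that $\mathcal L^\gamma_2$ is precisely the Krein-von Neumann extension of the nonnegative operator $\mathcal L_0+2$, which is the structural reason behind $\ker(\mathcal L^\gamma_2)=\ker((\mathcal L_0+2)^*)=\Span\{\varphi_{\omega,\gamma}\}$. Besides $\mathcal L^\gamma_1$, a second self-adjoint extension of $\mathcal L_0$ is the Dirichlet extension $\mathcal L^D_1$ determined by $f(0+)=f(0-)=0$; it is exactly the Friedrichs extension of $\mathcal L_0$ and it decouples the two half-lines. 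Since the quadratic form of $\mathcal L^D_1$ is the restriction of the form of $\mathcal L^\gamma_1$ to the codimension-one subspace $\{f\in\widetilde{W}:\ f(0)=0\}$ (on which the defect term $-\gamma|f(0)|^2$ vanishes), the min--max principle together with the codimension-one interlacing give $\lambda^1_k\le\lambda_k(\mathcal L^D_1)\le\lambda^1_{k+1}$ for all $k\ge 0$; in particular $\lambda^1_1\ge\lambda_0(\mathcal L^D_1)$. Thus it suffices to prove $\lambda_0(\mathcal L^D_1)\ge 0$ for $\gamma>0$.

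This last point is the only genuinely computational step, and it is where the sign of $\gamma$ is decisive. Because $\mathcal L^D_1$ is the orthogonal sum of $-\frac{d^2}{dx^2}+(x+\frac{\gamma}{2})^2-3$ on $L^2(0,\infty)$ with a Dirichlet condition at the origin and of its mirror image on $L^2(-\infty,0)$, one has $\lambda_0(\mathcal L^D_1)=\nu_0-3$, where $\nu_0$ is the lowest Dirichlet eigenvalue of $-\frac{d^2}{dx^2}+(x+\frac{\gamma}{2})^2$ on $(0,\infty)$. For $\gamma=0$ this eigenvalue is $\nu_0=3$, attained at the ground state $x\,e^{-x^2/2}$ (which is positive on $(0,\infty)$, hence the lowest Dirichlet mode); for $\gamma>0$ the substitution $y=x+\frac{\gamma}{2}$ turns the problem into the Dirichlet problem for $-\frac{d^2}{dy^2}+y^2$ on the smaller interval $(\frac{\gamma}{2},\infty)\subset(0,\infty)$, so $\nu_0\ge 3$ by domain monotonicity of Dirichlet eigenvalues. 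Hence $\lambda_0(\mathcal L^D_1)\ge 0$, so $\lambda^1_1\ge 0$ and $n(\mathcal L^\gamma_1)\le 1$. Together with the lower bound this gives $n(\mathcal L^\gamma_1)=1$, the unique negative eigenvalue being $-2$ with eigenfunction $\varphi_{\omega,\gamma}$. For $\mathcal L^\gamma_1$ restricted to $\widetilde{W}_{\rad}$ the same conclusion is immediate: $\varphi_{\omega,\gamma}$ is even, so $-2$ remains an eigenvalue of the restricted operator (count $\ge 1$), while restricting the form to even functions can only raise the min--max values (count $\le n(\mathcal L^\gamma_1)=1$).

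I expect the main obstacle to be exactly the inequality $\lambda_0(\mathcal L^D_1)\ge 0$: the bookkeeping with the minimal operator, its deficiency indices and the interlacing of its self-adjoint extensions is routine, and the shift $\mathcal L^\gamma_1=\mathcal L^\gamma_2-2$ with the kernel of $\mathcal L^\gamma_2$ is already at hand, but locating the bottom of the decoupled Dirichlet (Friedrichs) operator requires the explicit ground state at $\gamma=0$ and the monotonicity argument --- and this is precisely where one sees why the answer must change for $\gamma<0$.
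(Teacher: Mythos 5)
Your argument is correct, and it reaches the conclusion by a route that differs from the paper's in both of its key steps, even though the overall architecture — reduce to nonnegativity of the form on the codimension-one subspace $\{f(0)=0\}$, then count by a rank-one argument — is the same. The paper proves $(\mathcal L_0 f,f)\geq 0$ for $f(0)=0$ via a Jacobi-type factorization of $\mathcal L^\gamma_1$ through $\varphi'_{\omega,\gamma}$ (equation \eqref{NLSL11}, which is exactly where $\gamma>0$ enters, since $\varphi'_{\omega,\gamma}$ must not vanish away from the origin), and then invokes Naimark's theorem (Proposition \ref{semibounded}, after computing $n_\pm(\mathcal L_0)=1$ in Lemma \ref{ALK}) to conclude that the self-adjoint extension $\mathcal L^\gamma_1$ has at most one negative eigenvalue. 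You instead work with the decoupled Dirichlet (Friedrichs) extension, diagonalize its half-line piece using the odd Hermite ground state $x e^{-x^2/2}$ together with domain monotonicity on $(\tfrac{\gamma}{2},\infty)\subset(0,\infty)$, and count via the codimension-one min--max interlacing. Your version makes the role of the sign of $\gamma$ visible in the most elementary way possible (the Dirichlet interval shrinks for $\gamma>0$ and grows for $\gamma<0$, consistent with Proposition \ref{spec6a}), and it dispenses with the deficiency-index computation of Lemma \ref{ALK}; the paper's factorization, by contrast, needs no explicit half-line spectrum and is the form of the argument that survives when the ground state is not known in closed form. Your lower bound ($-2$ is an eigenvalue with eigenfunction $\varphi_{\omega,\gamma}$, via $\mathcal L^\gamma_1=\mathcal L^\gamma_2-2$ and $\ker(\mathcal L^\gamma_2)=\Span\{\varphi_{\omega,\gamma}\}$) and your treatment of the restriction to $\widetilde{W}_{\rad}$ coincide with the paper's. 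The aside identifying $\mathcal L^\gamma_2$ with the Krein--von Neumann extension of $\mathcal L_0+2$ is accurate but not load-bearing.
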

\noindent
The proof of this proposition relies on  the theory of extension of symmetric operators.
We start with two preliminary  results.
\begin{lemma} \label{ALK}
The  operator defined by 
\begin{equation}\label{L_0}
\begin{split}
 &\mathcal L_0 =-\frac{d^2}{dx^2}+ \Big(|x|+\frac{\gamma}{2}\Big)^2-3,\\
& \dom(\mathcal L_0) =\big\{ f\in  H^2(\mathbb R): x^2f\in L^2(\mathbb R),  f(0)=0 \big\}
\end{split}
\end{equation}
  is a densely defined symmetric operator with equal  deficiency indices  \\ $n_{\pm}(\mathcal L_0)=1$.
\end{lemma}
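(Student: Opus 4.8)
The plan is to prove this by identifying $\mathcal L_0$ as a symmetric restriction of the self-adjoint operator $\mathcal L_1^\gamma$ (equivalently $\mathcal L_1 = \mathcal L_{0,1}$, the multiplication part, via the decomposition used in the proof of Theorem~\ref{repres}) obtained by imposing the extra boundary condition $f(0)=0$, and then computing the deficiency indices directly.

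\textbf{Step 1: Density and symmetry.} First I would note that $\dom(\mathcal L_0)$ contains $C_0^\infty(\mathbb R\setminus\{0\})$, which is dense in $L^2(\mathbb R)$, so $\mathcal L_0$ is densely defined. Symmetry follows from integration by parts: for $f,g\in\dom(\mathcal L_0)$ the boundary terms at $x=0$ vanish because $f(0)=g(0)=0$ and $f,g\in H^2$ guarantees $f',g'$ have one-sided limits at $0$; the boundary terms at $\pm\infty$ vanish because $f,g, f',g'\in L^2$ together with the equation force decay. Hence $\mathcal L_0\subset\mathcal L_0^*$.

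\textbf{Step 2: Deficiency indices.} The key computation is $n_\pm(\mathcal L_0)=\dim\ker(\mathcal L_0^*\mp i\mathcal I)$. A function $u$ lies in $\ker(\mathcal L_0^*\mp i)$ iff $u\in L^2(\mathbb R)$ and, in the distributional sense on $\mathbb R\setminus\{0\}$,
\begin{equation*}
-u''+\Big(\big(|x|+\tfrac{\gamma}{2}\big)^2-3\Big)u=\pm i\,u,\qquad x\neq 0,
\end{equation*}
with no constraint coupling the two half-lines beyond membership in $H^1$ near $0$ is \emph{not} imposed here — rather, the relevant condition is just $u\in L^2$ and the equation holding away from the origin (since $\dom(\mathcal L_0)$ has no jump condition built in, $\mathcal L_0^*$ decouples the two sides up to the requirement $u(0)$ need not vanish). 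On each half-line $(0,\infty)$ and $(-\infty,0)$ the equation is, after the shift $y=x+\tfrac{\gamma}{2}$ on the right (resp. $y=x-\tfrac{\gamma}{2}$ on the left) and rescaling, a Weber/parabolic-cylinder equation; it has a two-dimensional solution space on each half-line, exactly one dimension of which is $L^2$ at $+\infty$ (resp. $-\infty$) — this is the parabolic cylinder function $U(a,z)$ with $a=-\tfrac{3\mp i}{2}$ described in the remark following Theorem~\ref{repres}, the other solution $V(a,z)$ growing like $e^{z^2/4}$. So the $L^2$ solution space of the equation on $\mathbb R\setminus\{0\}$ is exactly two-dimensional (one decaying solution on each side), parametrized by $(C_1,C_2)$. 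Imposing that $u$ be admissible against $\dom(\mathcal L_0^*)$ correctly — i.e. that the Green's identity with \emph{every} $f\in\dom(\mathcal L_0)$ produce no leftover boundary term — forces one linear relation, namely continuity at $x=0$: $C_1 U(a,\sqrt 2\,\tfrac{\gamma}{2})=C_2 U(a,-\sqrt2\,\tfrac{\gamma}{2})$. (This is precisely the relation used to define $f_i$ in the remark.) Hence $\dim\ker(\mathcal L_0^*\mp i)=1$, giving $n_\pm(\mathcal L_0)=1$.

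\textbf{The main obstacle} will be Step 2 — pinning down exactly which boundary/matching conditions survive in $\mathcal L_0^*$ and thus how many constraints cut the naive two-dimensional $L^2$-solution space down to one. Concretely one must verify that the adjoint imposes continuity at $0$ but not the jump relation, so that the deficiency space is $1$-dimensional rather than $2$-dimensional or $0$-dimensional; the cleanest route is to recall that $\mathcal L_1^\gamma$ (self-adjoint, hence deficiency indices $(0,0)$) is a one-parameter family of extensions of the symmetric operator with the \emph{extra} condition $f(0)=0$, and that $-\Delta_\beta$-type operators and their symmetric restrictions on the line have deficiency indices $(1,1)$ by the standard theory in \cite{AlbGes05}; the harmonic-type potential only changes the explicit form of the deficiency vectors, not their count. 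Everything else (density, symmetry, the limit-point nature at $\pm\infty$ forced by the coercive potential via \cite[Chapter II]{BerShu91}) is routine.
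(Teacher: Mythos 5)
Your proposal is correct in substance, but it proves the lemma by a genuinely different route than the paper. You compute the deficiency indices directly from the ODE: you characterize $\dom(\mathcal L_0^*)$ via the Lagrange boundary form (since every $f\in\dom(\mathcal L_0)$ has $f(0)=0$ and $f'(0)$ arbitrary, the adjoint imposes only continuity of $u$ at $x=0$ and no derivative-jump condition), use the limit-point property at $\pm\infty$ forced by the growing potential to get exactly one $L^2$ solution of $\tau u=\pm i u$ on each half-line, and then the single continuity constraint cuts the two-parameter space $(C_1,C_2)$ down to dimension one. The paper instead works abstractly: it builds the scale of Hilbert spaces $\mathscr H_s(\mathcal L)$ for the unperturbed self-adjoint operator $\mathcal L=-\tfrac{d^2}{dx^2}+(|x|+\tfrac{\gamma}{2})^2$, observes that the $\delta$-functional lies in $\mathscr H_{-1}(\mathcal L)\subset\mathscr H_{-2}(\mathcal L)$ (via $\mathscr H_1(\mathcal L)\hookrightarrow H^1(\mathbb R)\hookrightarrow L^\infty(\mathbb R)$), invokes \cite[Lemma 1.2.3]{ak} to conclude the restriction to $\ker\delta$ has $n_\pm=1$, and finishes by noting that the bounded perturbation $-3\mathcal I$ leaves deficiency indices unchanged (\cite[Chapter IV, Theorem 6]{Nai67}); this avoids all Weyl-theory and boundary-form bookkeeping. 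Your route is more elementary and has the bonus of exhibiting the deficiency vectors explicitly (consistent with the parabolic-cylinder description in the remark after Theorem \ref{repres}), but two points need tightening: the sentence asserting that $\mathcal L_0^*$ ``decouples the two sides'' contradicts the (correct) conclusion you reach two lines later, and you must check that the continuity relation is a \emph{nontrivial} linear constraint, i.e.\ that the half-line $L^2$ solutions do not vanish at the origin --- which holds because otherwise $\pm i$ would be an eigenvalue of the self-adjoint Dirichlet realization on the half-line, contradicting reality of its spectrum; without this remark the count could a priori be $2$ instead of $1$.
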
 
\begin{proof} First, we  establish the  scale of Hilbert spaces associated with the self-adjoint operator  (see \cite[Section I,\S 1.2.2]{ak})
$$\mathcal L=-\frac{d^2}{dx^2}+ \Big(|x|+\frac{\gamma}{2}\Big)^2,\quad\dom(\mathcal L)=\{ f\in  H^2(\mathbb R): x^2f\in L^2(\mathbb R)\}.$$
Define for $s\geq 0$ the space 
$$
\mathscr H_s(\mathcal L)=\big\{f\in L^2(\mathbb R): \|f\|_{s,2}=\Big\|(\mathcal L+\mathcal I)^{s/2}f\Big\|_2<\infty\big\}.
$$
The space $\mathscr H_s(\mathcal L)$ with norm $\|\cdot\|_{s,2}$ is complete. The dual space  of $\mathscr H_s(\mathcal L)$ will be denoted by $\mathscr H_{-s}(\mathcal L)=\mathscr H_s(\mathcal L) '$. The norm in the space $\mathscr H_{-s}(\mathcal L)$ is defined by the formula
$$
\|\psi\|_{-s,2}=\Big \|(\mathcal L+\mathcal I)^{-s/2} \psi\Big\|_2.
$$
The spaces $\mathscr H_s(\mathcal L)$ form the following chain 
\begin{equation*}\label{triples}
...\subset \mathscr H_2(\mathcal L)\subset \mathscr H_1(\mathcal L)\subset L^2(\mathbb R)=\mathscr H_0(\mathcal L)\subset\mathscr H_{-1}(\mathcal L)\subset \mathscr H_{-2}(\mathcal L)\subset....
\end{equation*}
Thus, the  space $\mathscr H_2(\mathcal L)$ coincides with the domain of the operator $\mathcal L$. The norm of the space 
$\mathscr H_1(\mathcal L)$ can be calculated as follows
\begin{equation*}
\begin{split}
&\|f\|^2_{1,2}=((\mathcal L+\mathcal I)^{1/2}f, (\mathcal L+\mathcal I)^{1/2}f)\\&=\int\limits_{\mathbb{R}}\left( |f'(x)|^2 +|f(x)|^2 +\Big(|x|+\frac{\gamma}{2}\Big)^2|f(x)|^2\right)dx.
\end{split}
\end{equation*}
Therefore, we have the embedding $\mathscr H_1(\mathcal L) \hookrightarrow  H_1(\mathbb R)$ and, by Sobolev embedding, $\mathscr H_1(\mathcal L) \hookrightarrow L^\infty (\mathbb R)$. From the former remark we obtain that the $\delta$-functional, $\delta: \mathscr H_1(\mathcal L)\to \mathbb C$ acting as  $\delta (\psi)=\psi(0)$ belongs to $\mathscr H_1(\mathcal L)'=\mathscr H_{-1}(\mathcal L)$ and consequently  $\delta\in \mathscr H_{-2}(\mathcal L)$. Therefore, using   \cite[Lemma 1.2.3]{ak}, it follows  that the restriction $\mathcal L_0$ of the operator $\mathcal L$  to the domain 
$$
\dom(\mathcal L'_0)=\{\psi\in \dom(\mathcal L): \delta (\psi)=\psi(0)=0\}
$$ 
is a densely defined symmetric operator with  equal deficiency indices  \\ $n_{\pm}(\mathcal L'_0)=1$. Next, since $\mathcal B=-3\mathcal I$ is a bounded operator, we have from   \cite[Chapter IV, Theorem 6]{Nai67} that the operators $\mathcal L'_0$ and $\mathcal L_0= \mathcal L'_0+\mathcal B$ have the same deficiency indices. This finishes the proof of the Lemma.
\end{proof}
 To investigate the number of negative eigenvalues of $\mathcal L_1^\gamma$ we will use the following abstract result (see \cite[Chapter IV, \S 14]{Nai67}).
\begin{proposition}\label{semibounded}
Let $\mathcal A$  be a densely defined lower semi-bounded symmetric operator (i.e., $\mathcal A\geq m\mathcal I$)  with finite deficiency indices $n_{\pm}(\mathcal A)=k<\infty$  in the Hilbert space $\mathfrak{H}$. Let also $\widetilde{\mathcal A}$ be a self-adjoint extension of $\mathcal A$.  Then the spectrum of $\widetilde{\mathcal A}$  in $(-\infty, m)$ is discrete and  consists of at most  $k$  eigenvalues counting multiplicities.
\end{proposition}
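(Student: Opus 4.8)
The plan is to combine von Neumann's description of the self-adjoint extensions of a symmetric operator with an elementary variational (min-max type) argument. Since passing to the closure changes neither symmetry, nor the lower bound, nor the deficiency indices, I may work with $\overline{\mathcal A}$: it is densely defined and symmetric, $\overline{\mathcal A}\geq m\mathcal I$, and $n_\pm(\overline{\mathcal A})=k$. By von Neumann's theory of self-adjoint extensions, $\dom(\overline{\mathcal A})\subseteq\dom(\widetilde{\mathcal A})\subseteq\dom(\mathcal A^*)$, the operator $\widetilde{\mathcal A}$ is the restriction of $\mathcal A^*$ to $\dom(\widetilde{\mathcal A})$, and $\dom(\widetilde{\mathcal A})=\dom(\overline{\mathcal A})\dotplus\mathcal N$ for some $k$-dimensional subspace $\mathcal N\subseteq\dom(\mathcal A^*)$ with $\dom(\overline{\mathcal A})\cap\mathcal N=\{0\}$. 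In particular, since $\widetilde{\mathcal A}$ extends $\overline{\mathcal A}$, one has $\langle\widetilde{\mathcal A}u,u\rangle=\langle\overline{\mathcal A}u,u\rangle\geq m\|u\|^2$ for every $u\in\dom(\overline{\mathcal A})$.

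The crucial step is a codimension count. Every subspace $L\subseteq\dom(\widetilde{\mathcal A})$ with $\dim L=k+1$ meets $\dom(\overline{\mathcal A})$ nontrivially, because the projection of $\dom(\widetilde{\mathcal A})$ onto $\mathcal N$ along $\dom(\overline{\mathcal A})$ cannot be injective on $L$ when $\dim L=k+1>k=\dim\mathcal N$. Hence $L$ contains a nonzero vector $u$ with $\langle\widetilde{\mathcal A}u,u\rangle\geq m\|u\|^2$, whence
\[
\sup_{u\in L\setminus\{0\}}\frac{\langle\widetilde{\mathcal A}u,u\rangle}{\|u\|^{2}}\ \geq\ m
\]
for every $(k+1)$-dimensional subspace $L\subseteq\dom(\widetilde{\mathcal A})$; call this inequality $(\star)$.

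Both assertions of the proposition follow from $(\star)$. If there were $\lambda_0\in\sigma_{\mathrm{ess}}(\widetilde{\mathcal A})$ with $\lambda_0<m$, pick $\delta>0$ with $\lambda_0+\delta<m$; then $\ran E_{\widetilde{\mathcal A}}\big((\lambda_0-\delta,\lambda_0+\delta)\big)$ is infinite-dimensional, lies in $\dom(\widetilde{\mathcal A})$ (a bounded spectral interval), and satisfies $\langle\widetilde{\mathcal A}u,u\rangle\leq(\lambda_0+\delta)\|u\|^2<m\|u\|^2$ on it, contradicting $(\star)$. Thus $\sigma_{\mathrm{ess}}(\widetilde{\mathcal A})\subseteq[m,\infty)$, so $\sigma(\widetilde{\mathcal A})\cap(-\infty,m)$ consists only of isolated eigenvalues of finite multiplicity; in particular this part of the spectrum is discrete. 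If, moreover, $\widetilde{\mathcal A}$ had at least $k+1$ eigenvalues in $(-\infty,m)$ counted with multiplicity, then choosing $k+1$ orthonormal eigenvectors with eigenvalues $<m$, and then $R$ large and $\varepsilon>0$ small enough, the subspace $\ran E_{\widetilde{\mathcal A}}\big([-R,m-\varepsilon]\big)\subseteq\dom(\widetilde{\mathcal A})$ would have dimension $\geq k+1$ while $\langle\widetilde{\mathcal A}u,u\rangle\leq(m-\varepsilon)\|u\|^2$ on it, again contradicting $(\star)$. Hence there are at most $k$ such eigenvalues, which completes the proof.

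The main obstacle — and the reason the argument proceeds via $(\star)$ rather than a direct comparison of quadratic forms — is that an arbitrary self-adjoint extension $\widetilde{\mathcal A}$ need not be semi-bounded by $m$ on $\dom(\widetilde{\mathcal A})$, so one cannot compare the forms of $\widetilde{\mathcal A}$ and $\overline{\mathcal A}$ globally; the argument circumvents this by using only the restriction of the form to the finite-codimension subspace $\dom(\overline{\mathcal A})$ together with bounded spectral intervals of $\widetilde{\mathcal A}$. A variant that avoids von Neumann's decomposition altogether is to compare $\widetilde{\mathcal A}$ with the Friedrichs extension $\mathcal A_F$ of $\mathcal A$: one has $\mathcal A_F\geq m\mathcal I$, hence $(-\infty,m)\subseteq\rho(\mathcal A_F)$, while the resolvents of $\widetilde{\mathcal A}$ and $\mathcal A_F$ differ by a self-adjoint operator of rank at most $k$ (Krein's resolvent formula); since a perturbation of this kind produces at most $k$ eigenvalues in an interval free of spectrum of $\mathcal A_F$, the same bound follows.
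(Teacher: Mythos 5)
Your argument is correct. Note, however, that the paper does not prove this proposition at all: it is quoted as a known abstract result with a reference to Naimark (Chapter IV, \S 14), so there is no in-paper proof to compare against line by line. Your proof is a clean, self-contained version of the classical argument: the von Neumann decomposition $\dom(\widetilde{\mathcal A})=\dom(\overline{\mathcal A})\dotplus\mathcal N$ with $\dim\mathcal N=k$, the codimension count showing every $(k+1)$-dimensional subspace of $\dom(\widetilde{\mathcal A})$ meets $\dom(\overline{\mathcal A})$ nontrivially, and the resulting Rayleigh-quotient bound $(\star)$, which you then play off against spectral projections onto bounded intervals to exclude both essential spectrum and more than $k$ eigenvalues below $m$. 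The details are sound: the restriction to bounded spectral intervals correctly handles the fact that $\widetilde{\mathcal A}$ need not be bounded below by $m$ (or at all, a priori), and the injectivity/dimension argument for the projection along $\dom(\overline{\mathcal A})$ is exactly what makes the Glazman-type min--max step work. Your closing remark sketching the alternative route via the Friedrichs extension and Krein's resolvent formula (a rank-$k$ resolvent difference creates at most $k$ eigenvalues in a spectral gap) is also a legitimate proof strategy and is arguably closer in spirit to how such statements are often derived in the extension-theory literature; either route fully justifies the proposition as used in the paper to conclude $n(\mathcal L^\gamma_1)\leq 1$ from $\mathcal L_0\geq 0$ and $n_\pm(\mathcal L_0)=1$.
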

\begin{remark}
Proposition \ref{semibounded} holds for upper semi-bounded operator $\mathcal A$  ($\mathcal A\leq M\mathcal I$) and interval $(M, \infty)$, respectively.
\end{remark}

\noindent\textit{Proof of Proposition \ref{num}.}
Recall that  $\mathcal L^\gamma_1$ is the self-adjoint extension of the  symmetric  operator $\mathcal{L}_0$ defined by \eqref{L_0} (see proof of Theorem \ref{repres} above).
Lemma \ref{ALK} implies the equality $n_{\pm}(\mathcal L_0)=1$. 
 
 Next,  since $\gamma>0$ ($\varphi'_{\omega,\gamma} \neq 0$ for $x\neq 0$) we can verify that for $f\in \dom(\mathcal L_0)$ we have (see \cite[Subsection 6.1]{AdaNoj13a})
 \begin{equation}\label{NLSL11}
-f''+ \left[\Big(|x|+\frac{\gamma}{2}\Big)^2-3\right]f=\frac{-1}{\varphi'_{\omega,\gamma}}\frac{d}{dx}\left[(\varphi'_{\omega,\gamma})^2\frac{d}{dx}\left(\frac{f}{\varphi'_{\omega,\gamma}}\right)\right],\quad\quad x\neq 0.
\end{equation}
 Now using \eqref{NLSL11} and integrating by parts, we get
\begin{equation}\label{NLSL12}
\begin{split}
(\mathcal L_{0} f,f)=&
\int\limits_{-\infty}^{0-}(\varphi'_{\omega,\gamma})^2\left|\frac{d}{dx}\left(\frac{f}{\varphi'_{\omega,\gamma}}\right)\right|^2dx\\ &+
\int\limits^{\infty}_{0+}(\varphi'_{\omega,\gamma})^2\left|\frac{d}{dx}\left(\frac{f}{\varphi'_{\omega,\gamma}}\right)\right|^2dx+
\left[f'\overline{f}-|f|^2\frac{\varphi''_{\omega,\gamma}}{\varphi'_{\omega,\gamma}}\right]_{0-}^{0+}.
\end{split}
\end{equation}
The integral terms in \eqref{NLSL12} are nonnegative.  Due to the condition $f(0)=0$, non-integral term vanishes, and we get $\mathcal L_{0}\geq 0$. Therefore, from Proposition \ref{semibounded} we obtain $n(\mathcal L^\gamma_1)\leq 1$. 
From the other hand,    
 \begin{equation}\label{ineq1}
 \mathcal L^\gamma_1 \varphi_{\omega, \gamma}=(\mathcal L^\gamma_2-2)\varphi_{\omega, \gamma}=-2\varphi_{\omega, \gamma},
  \end{equation}
since  $\mathcal L^\gamma_2 \varphi_{\omega, \gamma}=0$. Thus,  $n(\mathcal L^\gamma_1)= 1$.
 The second assertion of Proposition \ref{num} follows from \eqref{ineq1} and the fact that  $\varphi_{\omega, \gamma}$ is even.
\hfill$\square$
\vspace{0.5cm}

\noindent{\bf The number of negative eigenvalues $n(\mathcal L^\gamma_1)$ for $\gamma <0$}

The  analysis previously applied to calculate the number $n(\mathcal L^\gamma_1)$ was based essentially on the fact that  $\varphi'_{\omega, \gamma}(x) \neq 0$ for $x\neq 0$ in the case  $\gamma>0$. For $\gamma<0$ the function  $\varphi'_{\omega, \gamma}(x)$ has exactly two zeroes $x=\pm \frac{\gamma}{2}$, and the  formula \eqref{NLSL11} could not be applied. To study the case  of negative $\gamma$  we will use the theory of analytic perturbations for linear operators (see \cite{kato, RS}).

The following lemma states the analyticity of the families of operators $\mathcal L^\gamma_{j},\,j\in\{1,2\}$. 
\begin{lemma}\label{analici} As a function of $\gamma$, $(\mathcal L_{1}^\gamma)$ and $(\mathcal L_{2}^\gamma)$ are two real-analytic families of self-adjoint operators of type (B) in the sense of Kato.
\end{lemma}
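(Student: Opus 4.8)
The goal is to show that $(\mathcal L_1^\gamma)_\gamma$ and $(\mathcal L_2^\gamma)_\gamma$ are real-analytic families of self-adjoint operators of type (B), i.e. (following Kato, \cite[Chapter VII, \S 4]{kato}) that each arises from a family of closed, densely defined, sectorial (here: symmetric and uniformly lower-bounded) sesquilinear forms $B_j^\gamma$ whose form domain is independent of $\gamma$ and whose values depend analytically on the real parameter $\gamma$. Since by Remark~(ii) after Theorem~\ref{repres} one has $\mathcal L_1^\gamma=\mathcal L_2^\gamma-2$, it suffices to treat one of them, say $\mathcal L_1^\gamma$, and the other follows by a bounded (constant) shift, which preserves type (B) analyticity.

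The plan is as follows. First I would recall from \eqref{spec14} that $B_1^\gamma$ has the fixed form domain $\widetilde W\times\widetilde W$, independent of $\gamma$; this is the crucial structural point that makes type (B) — rather than merely type (A) — the right notion here. Next I would split $B_1^\gamma=B_1^{0}+C^\gamma$ where $B_1^{0}(f,g)=\int f'\overline{g'}+\int fg(x^2-3)$ is the (closed, lower-bounded) form of the shifted harmonic oscillator $\mathcal L_1^0$ of \eqref{osci}, and
\begin{equation*}
C^\gamma(f,g)=\int_{\mathbb R} fg\Big(\gamma|x|+\tfrac{\gamma^2}{4}\Big)\,dx-\gamma f(0)g(0).
\end{equation*}
For each fixed $f,g\in\widetilde W$ the map $\gamma\mapsto C^\gamma(f,g)$ is a polynomial of degree two in $\gamma$, hence entire; this gives the pointwise analyticity required by Kato's definition. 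Then I would check relative boundedness of the perturbing form: using $\widetilde W\hookrightarrow L^\infty(\mathbb R)$ (Sobolev embedding, as already used in the proof of Lemma~\ref{ALK}) and the elementary bound $|x|\le \tfrac12(x^2+1)$, one estimates $|C^\gamma(f,f)|\le a(\gamma)\,B_1^{0}(f,f)+b(\gamma)\|f\|_2^2$ with $a(\gamma)\to 0$ as $\gamma\to 0$ and both coefficients locally bounded in $\gamma$; in particular, for every $\gamma$, $C^\gamma$ is relatively form-bounded with respect to $B_1^0$ with relative bound zero (since $\gamma|x|+\tfrac{\gamma^2}{4}$ is dominated by any multiple of $x^2$ plus a constant, and $f\mapsto f(0)^2$ is likewise infinitesimally form-bounded, as the computation in \eqref{NLSL12}-style integration by parts or the scale-of-spaces argument of Lemma~\ref{ALK} shows). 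Consequently $B_1^\gamma$ is closed, symmetric and uniformly lower-bounded on $\widetilde W$ for $\gamma$ in any compact set, and therefore $(\mathcal L_1^\gamma)$ is a self-adjoint holomorphic family of type (B) on $\mathbb R$; restricting to real $\gamma$ and noting that $C^\gamma$ maps real functions to real numbers gives the real-analytic (self-adjoint) version.

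The main obstacle, and the only place requiring care rather than bookkeeping, is the treatment of the boundary term $-\gamma f(0)g(0)$: it is not a multiplication form, so one cannot absorb it into the potential, and one must verify that the functional $f\mapsto f(0)$ is form-bounded relative to $B_1^0$ with relative bound zero. This is exactly the content already extracted in Lemma~\ref{ALK} via the scale of Hilbert spaces $\mathscr H_s(\mathcal L)$: $\delta\in\mathscr H_{-1}(\mathcal L)$, which translates into $|f(0)|^2\le\varepsilon\,B_1^0(f,f)+C_\varepsilon\|f\|_2^2$ for every $\varepsilon>0$. Granting that, the rest is the standard verification of Kato's criteria for type (B) families, and the analyticity in $\gamma$ is immediate from the polynomial dependence of $C^\gamma(f,g)$. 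Finally, $\mathcal L_2^\gamma=\mathcal L_1^\gamma+2$ inherits the same property, completing the proof of Lemma~\ref{analici}.
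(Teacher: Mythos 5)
Your proof is correct and follows essentially the same route as the paper: both reduce the lemma to Kato's criterion for holomorphic families of type (B) (\cite[Theorem VII-4.2]{kato}) applied to the forms $B_j^\gamma$ of \eqref{spec14} on the fixed domain $\widetilde W$, with the $\gamma$-analyticity of $\gamma\mapsto B_j^\gamma(v,v)$ read off from its polynomial dependence on $\gamma$, and with $\mathcal L_1^\gamma=\mathcal L_2^\gamma-2$ making the two cases interchangeable. The only difference is one of detail: you justify closedness and lower semiboundedness explicitly by a KLMN-type relative form bound (splitting off the $\gamma=0$ oscillator form and using that $\gamma|x|+\tfrac{\gamma^2}{4}$ and $f\mapsto|f(0)|^2$ are infinitesimally form-bounded), whereas the paper simply asserts these properties as immediate, having already recorded them when defining $\mathcal L_j^\gamma$ via the First Representation Theorem, and uses its decomposition $B_1^\gamma=B^\gamma+B_1$ from Theorem \ref{repres} only to exhibit the analyticity.
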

\begin{proof} By Theorem \ref{repres} and \cite[Theorem VII-4.2]{kato}, it suffices to prove that the families of bilinear forms $(B^{\gamma}_1)$ and $(B^{\gamma}_2)$ defined in \eqref{spec14} are  real-analytic  of type (B). Indeed,  it is immediate that they are bounded  from below and closed. Moreover, the decomposition  of $B^{\gamma}_1$ into $B^{\gamma}$ and $B_1$,  implies that $\gamma\to (B^{\gamma}_1v, v)$ is  analytic.  The proof for the family $(B^{\gamma}_2)$ is similar.  
\end{proof}
In what follows we also use the following classical result about the harmonic oscillator operator \eqref{osci} (see \cite{BerShu91}).

\begin{lemma}\label{lemma_L0} Let operator  $\mathcal L^0_1$ be defined by \eqref{osci}. Then the following assertions hold. 
\begin{itemize}
\item[$(i)$] $\mathcal L^0_1$ has two simple nonpositive eigenvalues: the first one is negative and the second one is zero.
\item[$(ii)$]  $\ker(\mathcal L^0_1)=\Span\{\varphi_{\omega,0}'\}$.
\item[$(iii)$] The rest of the spectrum of $\mathcal L^0_1$ is positive.
\end{itemize}
\end{lemma}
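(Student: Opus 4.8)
The plan is to recognize $\mathcal L^0_1$ as a rigid shift of the quantum harmonic oscillator $N:=-\frac{d^2}{dx^2}+x^2$ on $\{f\in H^2(\mathbb R):x^2f\in L^2(\mathbb R)\}$, namely $\mathcal L^0_1=N-3$, and then read off everything from the textbook spectral picture of $N$. First I would recall (citing \cite{BerShu91}, or deriving it via the ladder operators $a=\tfrac1{\sqrt2}(x+\tfrac{d}{dx})$ and $a^{*}=\tfrac1{\sqrt2}(x-\tfrac{d}{dx})$, for which $N=2a^{*}a+1$ and $[a,a^{*}]=\mathcal I$) that $N$ has purely discrete simple spectrum $\{2k+1:k\geq 0\}$, with the Hermite function $h_k(x)=H_k(x)e^{-x^2/2}$ as eigenfunction for $2k+1$, and that $h_k$ has exactly $k$ real zeros. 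Shifting by $-3$ turns this into $\sigma(\mathcal L^0_1)=\{2k-2:k\geq 0\}=\{-2,0,2,4,\dots\}$ with the same eigenfunctions, which already yields $(i)$ (the two nonpositive eigenvalues are $-2<0$ and $0$, both simple) and $(iii)$ (the rest of the spectrum lies in $[2,\infty)$).

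For $(ii)$ I would identify the simple eigenvalue $0$ of $\mathcal L^0_1$ with the first excited state of $N$. A short computation with $\varphi_{\omega,0}(x)=e^{(\omega+1)/2}e^{-x^2/2}$ shows $N\varphi_{\omega,0}=\varphi_{\omega,0}$, so $\varphi_{\omega,0}$ (being positive, hence the ground state of $N$ up to a constant) satisfies $\mathcal L^0_1\varphi_{\omega,0}=-2\varphi_{\omega,0}$; and $\varphi_{\omega,0}'(x)=-x\,e^{(\omega+1)/2}e^{-x^2/2}$ is a nonzero multiple of $h_1$, so $N\varphi_{\omega,0}'=3\varphi_{\omega,0}'$, i.e. $\mathcal L^0_1\varphi_{\omega,0}'=0$. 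Since the eigenvalue $0$ is simple, this forces $\ker(\mathcal L^0_1)=\Span\{\varphi_{\omega,0}'\}$.

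As an alternative that stays inside the oscillation-theoretic framework already set up in this section, I could avoid the Hermite expansion altogether: the eigenvalues of $\mathcal L^0_1$ are simple and accumulate only at $+\infty$, and by the classical Sturm oscillation theorem for the smooth confining potential $x^2-3$ (see \cite{BerShu91}; cf. Lemma \ref{Oscill}) the $k$-th eigenfunction has exactly $k$ zeros; then $\varphi_{\omega,0}>0$ is the lowest eigenfunction with $\lambda_0=-2$, $\varphi_{\omega,0}'$ (one zero, at $x=0$) is the next one with $\lambda_1=0$, and hence $\lambda_k>0$ for all $k\geq 2$. The only subtle point in either route is excluding an eigenvalue hidden strictly between $-2$ and $0$; this is precisely what the explicit Hermite spectrum (or oscillation together with simplicity) rules out, so I do not expect any genuine obstacle here.
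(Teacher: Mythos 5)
Your proposal is correct and follows essentially the same route as the paper, which simply invokes the known harmonic-oscillator fact $\sigma(\mathcal L^0_1)=\{2n-2:\ n=0,1,2,\dots\}$ from \cite{BerShu91}; your identification of $\varphi_{\omega,0}$ and $\varphi_{\omega,0}'$ with the ground and first excited Hermite states (eigenvalues $-2$ and $0$ after the shift by $-3$) just makes explicit what the paper leaves to the reader. The oscillation-theoretic alternative you sketch is a valid backup but is not needed beyond what the cited spectral picture already gives.
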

Indeed, the above Lemma follows from the known fact $\sigma(\mathcal L^0_1)=\{2n-2: n=0,1,2,...\}$.

\begin{proposition}\label{perteigen}  There  exist $\gamma_0>0$ and two analytic functions \\ $\Pi : (-\gamma_0,\gamma_0)\to \mathbb R$ and $\Omega: (-\gamma_0,\gamma_0)\to L^2(\mathbb{R})$ such that
\begin{enumerate}
\item[$(i)$] $\Pi(0)=0$ and $\Omega(0)=\varphi_{\omega,0}'$.

\item[$(ii)$] For all $\gamma\in (-\gamma_0,\gamma_0)$, $\Pi(\gamma)$ is the  simple isolated second eigenvalue of $\mathcal L_{1}^\gamma$, and $\Omega(\gamma)$ is the associated eigenvector for $\Pi(\gamma)$.

\item[$(iii)$] $\gamma_0$ can be chosen small enough to ensure that for  $\gamma\in (-\gamma_0,\gamma_0)$  the spectrum of $\mathcal L_{1}^\gamma$ is positive, except at most the  first two eigenvalues.
\end{enumerate}
\end{proposition}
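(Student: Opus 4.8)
The plan is to treat $0\in\sigma(\mathcal L^0_1)$ as an isolated simple eigenvalue and to deform it analytically in $\gamma$ by the Kato--Rellich theory of self-adjoint holomorphic families, and then to identify which ordered eigenvalue of $\mathcal L^\gamma_1$ the resulting analytic branch is by a continuity argument based on the spectral gap of the harmonic oscillator.

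First I would invoke Lemma \ref{analici}: $(\mathcal L^\gamma_1)$ is a real-analytic family of self-adjoint operators of type (B), so its resolvent is analytic in $\gamma$ off the spectrum. By Lemma \ref{lemma_L0}, $\sigma(\mathcal L^0_1)=\{-2,0,2,4,\dots\}$, hence the eigenvalue $0$ is simple and separated from the remainder of the spectrum by a gap of length $2$ on each side, with eigenspace $\Span\{\varphi'_{\omega,0}\}$. The Kato perturbation theory for isolated eigenvalues of holomorphic families (e.g. \cite[Chapter VII, \S 3 and \S 4]{kato}; see also \cite{RS}) then yields a number $\gamma_0>0$, a real-analytic function $\Pi:(-\gamma_0,\gamma_0)\to\mathbb R$ with $\Pi(0)=0$, and a real-analytic rank-one projection-valued map $\gamma\mapsto P(\gamma)$ with $P(0)$ the orthogonal projection onto $\Span\{\varphi'_{\omega,0}\}$, such that $\Pi(\gamma)$ is the only point of $\sigma(\mathcal L^\gamma_1)$ in a fixed small disc about $0$ and $\ran P(\gamma)$ is its one-dimensional eigenspace. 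Setting $\Omega(\gamma):=P(\gamma)\varphi'_{\omega,0}$ produces a real-analytic $L^2(\mathbb R)$-valued branch with $\Omega(0)=\varphi'_{\omega,0}$; since $\Omega(0)\neq 0$, after shrinking $\gamma_0$ we get $\Omega(\gamma)\neq 0$, so $\Omega(\gamma)$ is a genuine eigenvector of $\mathcal L^\gamma_1$ for $\Pi(\gamma)$. This settles $(i)$.

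Next I would transport the eigenvalue ordering at $\gamma=0$ to small $\gamma$ to obtain $(ii)$ and $(iii)$. As noted at the start of Subsection \ref{sub3.3}, for every $\gamma$ the operator $\mathcal L^\gamma_1$ has purely discrete, simple spectrum $\lambda^1_0(\gamma)<\lambda^1_1(\gamma)<\lambda^1_2(\gamma)<\cdots$; by the min-max principle together with the norm-resolvent continuity built into a type-(B) analytic family, each $\gamma\mapsto\lambda^1_k(\gamma)$ is continuous, with $\lambda^1_0(0)=-2$, $\lambda^1_1(0)=0$, $\lambda^1_2(0)=2$. Hence, for $\gamma_0$ small enough, $\lambda^1_0(\gamma)$ stays near $-2$, $\lambda^1_2(\gamma)$ stays above $1$, and the analytic branch $\Pi(\gamma)$ from the previous paragraph is the unique eigenvalue of $\mathcal L^\gamma_1$ in $(-1,1)$; since $\lambda^1_1(\gamma)\to 0$, it lies in $(-1,1)$ for small $\gamma$ and therefore $\lambda^1_1(\gamma)=\Pi(\gamma)$, which is moreover isolated, being a simple eigenvalue of an operator with discrete spectrum. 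This gives $(ii)$. Finally, for $k\geq 2$ we have $\lambda^1_k(\gamma)\geq\lambda^1_2(\gamma)>1>0$, so the only possibly non-positive eigenvalues of $\mathcal L^\gamma_1$ are $\lambda^1_0(\gamma)$ and $\lambda^1_1(\gamma)=\Pi(\gamma)$, which is exactly $(iii)$.

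The argument is fairly routine once Lemma \ref{analici} is available; the one place needing care is the bookkeeping in the last paragraph --- checking that the analytic branch emanating from the eigenvalue $0$ is precisely the \emph{ordered} second eigenvalue of $\mathcal L^\gamma_1$ and does not merge with $\lambda^1_0(\gamma)$ or $\lambda^1_2(\gamma)$ --- which is controlled entirely by the uniform spectral gap of $\mathcal L^0_1$ and the continuity of the eigenvalue curves. An alternative to the averaging trick for $\Omega$ is to use that for self-adjoint holomorphic families the eigenvector itself may be chosen analytic, but the formulation via $P(\gamma)\varphi'_{\omega,0}$ is cleaner and automatically produces the normalization $\Omega(0)=\varphi'_{\omega,0}$.
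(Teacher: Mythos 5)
Your proposal is correct and follows essentially the same route as the paper: both rest on Lemma \ref{analici} (real-analytic family of type (B)), the spectral picture of $\mathcal L^0_1$ from Lemma \ref{lemma_L0}, and the Kato--Rellich perturbation theory for isolated eigenvalues. The only difference is bookkeeping --- the paper encloses the two eigenvalues $\{-2,0\}$ by one contour and reads off total multiplicity two inside it, while you isolate the eigenvalue $0$ alone and then identify $\Pi(\gamma)$ as the ordered second eigenvalue via continuity of the eigenvalue curves --- which amounts to the same argument.
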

\begin{proof} Using the spectral structure of the operator $\mathcal L^0_1$ (see Lemma \ref{lemma_L0}), we can separate the spectrum $\sigma(\mathcal L^0_1)$   into two parts $\sigma_0=\{\lambda^0_1, 0\}$ and  $\sigma_1$ by a closed curve  $\Gamma$ (for example, a circle), such that $\sigma_0$ belongs to the inner domain of $\Gamma$ and $\sigma_1$ to the outer domain of $\Gamma$ (note that $\sigma_1\subset (\epsilon, +\infty)$ for $\epsilon>0$).  Next,  Lemma \ref{analici} and analytic perturbations theory imply  that  $\Gamma\subset \rho(\mathcal L^\gamma_{1})$ for sufficiently small $|\gamma |$, and $\sigma (\mathcal L^\gamma_{1})$ is likewise separated by $\Gamma$ into two parts, such   that the part of $\sigma (\mathcal L^\gamma_{1})$ inside $\Gamma$ consists of a finite number of eigenvalues with total multiplicity (algebraic) two. Therefore, we obtain from the Kato-Rellich Theorem (see  \cite[Theorem XII.8]{RS}) the existence of two analytic functions $\Pi, \Omega$ defined in a neighborhood of zero such that  the items $(i)$, $(ii)$ and $(iii)$ hold.
\end{proof}

Below we will study how the perturbed second eigenvalue $\Pi(\gamma)$ changes depending on the sign of $\gamma$.  For small $\gamma$  we have the following result.
\begin{proposition}\label{signeigen} There exists $0<\gamma_1<\gamma_0$ such that $\Pi(\gamma)<0$ for any $\gamma\in (-\gamma_1,0)$, and $\Pi(\gamma)>0$ for  any $\gamma\in (0, \gamma_1)$. 
\end{proposition}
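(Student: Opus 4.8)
The plan is to compute the first-order term in the analytic expansion of $\Pi(\gamma)$ at $\gamma = 0$ and show it is strictly positive, so that $\Pi$ changes sign through zero exactly as claimed. By Proposition \ref{perteigen} we may write $\Pi(\gamma) = \Pi'(0)\gamma + O(\gamma^2)$ and $\Omega(\gamma) = \varphi_{\omega,0}' + \gamma\,\Omega'(0) + O(\gamma^2)$, where these expansions are in $\mathbb R$ and $L^2(\mathbb R)$ respectively (indeed the type (B) structure from Lemma \ref{analici} gives convergence in the form domain $\widetilde W$). Differentiating the eigenvalue relation $\mathcal L_1^\gamma \Omega(\gamma) = \Pi(\gamma)\Omega(\gamma)$ — or, more safely, the bilinear-form identity $B_1^\gamma(\Omega(\gamma), z) = \Pi(\gamma)(\Omega(\gamma), z)$ for all $z \in \widetilde W$ — with respect to $\gamma$ at $\gamma = 0$, testing against $z = \varphi_{\omega,0}'$, and using $\mathcal L_1^0 \varphi_{\omega,0}' = 0$ together with the fact that $\varphi_{\omega,0}'$ is normalized (or dividing by $\|\varphi_{\omega,0}'\|_2^2$), the terms involving $\Omega'(0)$ drop out because $\mathcal L_1^0$ is self-adjoint and kills $\varphi_{\omega,0}'$. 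This yields the Feynman–Hellmann-type formula
\begin{equation*}
\Pi'(0) = \frac{1}{\|\varphi_{\omega,0}'\|_2^2}\,\frac{\partial}{\partial\gamma}\Big|_{\gamma=0} B_1^\gamma\big(\varphi_{\omega,0}', \varphi_{\omega,0}'\big).
\end{equation*}

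Next I would evaluate $\partial_\gamma B_1^\gamma(\varphi_{\omega,0}', \varphi_{\omega,0}')|_{\gamma=0}$ explicitly from the definition \eqref{spec14}. Writing $B_1^\gamma(f,f) = \|f'\|_2^2 + \int_{\mathbb R} \big((|x| + \tfrac{\gamma}{2})^2 - 3\big) f^2\,dx - \gamma f(0)^2$, only the potential term and the boundary term depend on $\gamma$, so
\begin{equation*}
\frac{\partial}{\partial\gamma}\Big|_{\gamma=0} B_1^\gamma(f,f) = \int_{\mathbb R} |x|\, f(x)^2\,dx - f(0)^2.
\end{equation*}
With $f = \varphi_{\omega,0}'$: since $\varphi_{\omega,0}(x) = e^{(\omega+1)/2} e^{-x^2/2}$ we have $\varphi_{\omega,0}'(x) = -x\,\varphi_{\omega,0}(x)$, hence $\varphi_{\omega,0}'(0) = 0$ and the boundary term vanishes entirely. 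Therefore $\Pi'(0)$ is proportional to $\int_{\mathbb R} |x|\,(\varphi_{\omega,0}'(x))^2\,dx = \int_{\mathbb R} |x|^3\, \varphi_{\omega,0}(x)^2\,dx > 0$, which is manifestly strictly positive. Consequently $\Pi'(0) > 0$, so there is $\gamma_1 \in (0,\gamma_0)$ with $\Pi(\gamma) > 0$ for $\gamma \in (0,\gamma_1)$ and $\Pi(\gamma) < 0$ for $\gamma \in (-\gamma_1, 0)$.

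The main technical obstacle — and the only point requiring genuine care rather than routine computation — is justifying the differentiation of the eigenvalue equation and, in particular, that the contribution of $\Omega'(0)$ truly cancels. Because $\mathcal L_1^\gamma$ is a family of type (B) (Lemma \ref{analici}), the cleanest route is to differentiate the form identity rather than an operator identity: from $B_1^\gamma(\Omega(\gamma), z) = \Pi(\gamma)(\Omega(\gamma), z)$ one gets, at $\gamma = 0$ and with $z = \varphi_{\omega,0}'$,
\begin{equation*}
\dot B_1^0(\varphi_{\omega,0}', \varphi_{\omega,0}') + B_1^0(\Omega'(0), \varphi_{\omega,0}') = \Pi'(0)\|\varphi_{\omega,0}'\|_2^2 + \Pi(0)(\Omega'(0), \varphi_{\omega,0}'),
\end{equation*}
where $\dot B_1^0$ denotes the $\gamma$-derivative of the form with the arguments held fixed. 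Since $\Pi(0) = 0$ and $B_1^0(\Omega'(0), \varphi_{\omega,0}') = (\Omega'(0), \mathcal L_1^0 \varphi_{\omega,0}') = 0$ by self-adjointness of $\mathcal L_1^0$ and $\varphi_{\omega,0}' \in \ker \mathcal L_1^0$ (Lemma \ref{lemma_L0}(ii)), the formula for $\Pi'(0)$ follows, provided one has arranged $\|\Omega(\gamma)\|_2$ to be constant (normalize) or, alternatively, simply divides through by $\|\varphi_{\omega,0}'\|_2^2$ after noting $(\Omega'(0),\varphi_{\omega,0}')$ is finite. One should also remark that $\varphi_{\omega,0}'$ lies in the form domain $\widetilde W$ of every $B_1^\gamma$, which is immediate from its Gaussian decay, so all pairings above are well defined.
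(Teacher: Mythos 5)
Your proposal is correct and is essentially the paper's own argument: both are the same first-order (Feynman--Hellmann type) computation exploiting $\varphi_{\omega,0}'(0)=0$, $\mathcal L_1^0\varphi_{\omega,0}'=0$ and self-adjointness to cancel the $\Omega'(0)$ contribution, yielding $\Pi'(0)=\|\varphi_{\omega,0}'\|_2^{-2}\int_{\mathbb R}|x|\,|\varphi_{\omega,0}'|^2\,dx>0$. The only cosmetic difference is that the paper pairs the operator identity, computing $(\mathcal L_1^\gamma\Omega(\gamma),\varphi_{\omega,0}')$ in two ways after noting $\varphi_{\omega,0}'\in\dom(\mathcal L_1^\gamma)$ and $\mathcal L_1^\gamma\varphi_{\omega,0}'=(\gamma|x|+\tfrac{\gamma^2}{4})\varphi_{\omega,0}'$, whereas you differentiate the bilinear-form identity $B_1^\gamma(\Omega(\gamma),z)=\Pi(\gamma)(\Omega(\gamma),z)$.
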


\begin{proof} From Taylor's theorem we have the following expansions
\begin{equation}\label{decomp1}
\Pi(\gamma)=\beta \gamma+ O(\gamma ^2)\quad\text{and}\quad \Omega(\gamma)=\varphi'_{\omega,0}+ \gamma \psi_0 +  O(\gamma^2),
\end{equation}
where  $\beta\in \mathbb R$ ($\beta=\Pi'(0)$) and $\psi_0\in L^2(\mathbb{R})$ (since $\psi_0=\Omega'(0)$).  The desired result will follow  if we show that $\beta>0$.  We compute $(\mathcal L^\gamma_{1} \Omega(\gamma), \varphi'_{\omega,0})$ in two different ways. 

 From \eqref{decomp1} we obtain
\begin{equation}\label{produ}
\Pi(\gamma) \Omega(\gamma)=\beta \gamma \varphi'_{\omega,0} + O(\gamma^2).
\end{equation}
Since $\mathcal L^\gamma_{1} \Omega(\gamma)=\Pi(\gamma) \Omega(\gamma)$, it follows from \eqref{produ} that
\begin{equation}\label{decomp5}
\begin{aligned}
(\mathcal L^\gamma_{1} \Omega(\gamma),  \varphi'_{\omega,0})=\beta \gamma\|\varphi'_{\omega,0}\|_2^2 +O(\gamma^2).
\end{aligned}
\end{equation}
Having $\varphi'_{\omega,0}\in \mathcal \dom(\mathcal L^\gamma_{1})$ ($\varphi'_{\omega,0}(0)=0$) and $\mathcal L_1^0\varphi'_{\omega,0}=0$, we obtain
\begin{equation}\label{produ2}
 \mathcal L^\gamma_{1}\varphi'_{\omega,0}=\mathcal L_1^0\varphi'_{\omega,0} + (\gamma |x| + \tfrac{\gamma^2}{4})\varphi'_{\omega,0} =(\gamma |x| + \tfrac{\gamma^2}{4})\varphi'_{\omega,0}.
\end{equation}
Since $\mathcal L^\gamma_{1} $ is  self-adjoint,  we obtain   from \eqref{decomp1} and \eqref{produ2}  that 
\begin{equation}\label{pro3}
\begin{aligned}
(\mathcal L^\gamma_{1} \Omega(\gamma), \varphi'_{\omega,0})&=(\Omega(\gamma), \mathcal L^\gamma_{1} \varphi'_{\omega,0})=(\varphi'_{\omega,0},  (\gamma |x| + \tfrac{\gamma^2}{4})\varphi'_{\omega,0}) + O(\gamma^2)\\
&= \gamma \int\limits_\mathbb{R} |x||\varphi'_{\omega,0}(x)|^2 dx + O(\gamma^2).
\end{aligned}
\end{equation}
Finally,  combination of \eqref{decomp5} and \eqref{pro3} leads to 
\begin{equation}\label{decomp10}
\beta=\frac{\int\limits_\mathbb{R} |x||\varphi'_{\omega}(x)|^2 dx}{\|\varphi'_{\omega,0}\|_2^2}+O(\gamma).
\end{equation}
From \eqref{decomp10} it follows that $\beta>0$, and, therefore, assertion is proved.
\end{proof}

Now we can count  the number of negative eigenvalues of $\mathcal L^\gamma_{1}$ for any  $\gamma$  using  a classical continuation argument based on the Riesz-projection. 
\begin{proposition}\label{spec6a} Let $\gamma \in \mathbb R\setminus \{0\}$.  Then we have
\begin{itemize}
\item[$(i)$] $n(\mathcal L^\gamma_{1})=2$  for $\gamma<0$.
\item[$(ii)$]  $n(\mathcal L^\gamma_{1})=1$ for $\gamma>0$.
\item[$(iii)$]  $n(\mathcal L^\gamma_{1})=1$ for $\mathcal L^\gamma_{1}$ restricted to $\widetilde{W}_{\rad}$.
\end{itemize}
\end{proposition}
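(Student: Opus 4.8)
The plan is to run a classical continuation argument based on the Riesz projection, propagating the local spectral information available near $\gamma=0$ to the whole half-lines $\gamma<0$ and $\gamma>0$. Three facts are already in place: $(\mathcal L^\gamma_1)$ is a real-analytic family of self-adjoint operators of type (B) with compact resolvent (Lemma \ref{analici} and the discreteness noted above); every $\mathcal L^\gamma_1$ is bounded below by $-3$, so it has only finitely many negative eigenvalues and they all lie in $[-3,0)$; and $0\notin\sigma(\mathcal L^\gamma_1)$ for each $\gamma\neq 0$, since $\ker(\mathcal L^\gamma_1)=\{0\}$ was established above (via \eqref{spec0}). This last fact is the pivot of the whole argument: it forbids an eigenvalue from crossing the origin, so $n(\mathcal L^\gamma_1)$ cannot change as $\gamma$ varies within a connected subset of $\mathbb R\setminus\{0\}$.

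To prove $(i)$ I would fix $\gamma^*<0$ and use the analyticity of the family to obtain a uniform bound on $\|(\mathcal L^\gamma_1)^{-1}\|$ for $\gamma$ near $\gamma^*$, hence a spectral gap $(-\delta,\delta)\cap\sigma(\mathcal L^\gamma_1)=\varnothing$ holding uniformly for such $\gamma$; combined with the lower bound $-3$, this lets me enclose precisely the negative eigenvalues of $\mathcal L^\gamma_1$ in a fixed rectangular contour $\Gamma\subset\rho(\mathcal L^\gamma_1)$ (with corners $-4\pm i$ and $-\tfrac{\delta}{2}\pm i$). The associated Riesz projection $P(\gamma)=-\frac{1}{2\pi i}\oint_\Gamma(\mathcal L^\gamma_1-z)^{-1}\,dz$ is then analytic in $\gamma$, so $\dim\ran P(\gamma)$ is locally constant, and by self-adjointness together with simplicity of all eigenvalues $\dim\ran P(\gamma)=n(\mathcal L^\gamma_1)$. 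Thus $\gamma\mapsto n(\mathcal L^\gamma_1)$ is constant on $(-\infty,0)$; its value for $\gamma\in(-\gamma_1,0)$ equals $2$, because the lowest eigenvalue is $-2$ with eigenfunction $\varphi_{\omega,\gamma}$ by \eqref{ineq1}, the second eigenvalue $\Pi(\gamma)$ is negative by Proposition \ref{signeigen}, and there are no further negative eigenvalues by Proposition \ref{perteigen}. For $(ii)$ I would quote Proposition \ref{num} directly, or equivalently run the same contour argument on $(0,\infty)$ starting from $n(\mathcal L^\gamma_1)=1$ for $\gamma\in(0,\gamma_1)$.

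For $(iii)$ the extra ingredient is parity: since the potential $(|x|+\tfrac{\gamma}{2})^2$ and the point interaction at $0$ are symmetric, $\mathcal L^\gamma_1$ commutes with $f(x)\mapsto f(-x)$, hence decomposes as the orthogonal sum of its restrictions to $\widetilde{W}_{\rad}$ and to the odd subspace, each again a type (B) analytic family with spectrum in $[-3,\infty)$ and trivial kernel for $\gamma\neq 0$. Since all eigenvalues of $\mathcal L^\gamma_1$ are simple, every eigenfunction is purely even or purely odd; the eigenfunction $\Omega(\gamma)$ of the second eigenvalue depends continuously on $\gamma$ and equals the odd function $\varphi'_{\omega,0}$ at $\gamma=0$, so it remains odd near $0$, whereas $\varphi_{\omega,\gamma}$ is even. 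Hence for small $\gamma\neq 0$ the even part $\mathcal L^\gamma_1|_{\widetilde{W}_{\rad}}$ has exactly one negative eigenvalue, namely $-2$; applying the continuation argument of $(i)$ inside $\widetilde{W}_{\rad}$ propagates this to all $\gamma<0$, while for $\gamma>0$ it is the second assertion of Proposition \ref{num}.

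The step I expect to require the most care is the bookkeeping around the Riesz projection: verifying that the contour $\Gamma$ can be chosen independently of $\gamma$ on a neighborhood of each $\gamma^*$ and that it encircles exactly the negative eigenvalues — none escaping it on the left (excluded by $\sigma(\mathcal L^\gamma_1)\subset[-3,\infty)$) and none slipping past it near the origin (excluded by the uniform gap, which itself rests on $0\notin\sigma(\mathcal L^\gamma_1)$ for $\gamma\neq 0$). Everything else — analyticity of $P(\gamma)$, the identity $\dim\ran P(\gamma)=n(\mathcal L^\gamma_1)$, and the parity splitting — is routine.
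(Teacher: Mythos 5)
Your argument for items $(i)$ and $(ii)$ is correct and is essentially the paper's: both rest on the same three pillars (real-analyticity of the family from Lemma \ref{analici}, the uniform lower bound $\mathcal L^\gamma_1\geq -3$, and triviality of $\ker(\mathcal L^\gamma_1)$ for $\gamma\neq 0$, which keeps $0$ in the resolvent set and forbids an eigenvalue from crossing the origin), and both conclude by observing that the rank of a $\gamma$-dependent Riesz projection enclosing exactly the negative spectrum is locally constant. The paper globalizes via an infimum-and-contradiction device (defining $\gamma_\infty$ as the infimum of the set where the count equals $2$ and showing $\gamma_\infty=-\infty$), whereas you say directly that a locally constant integer-valued function on the connected set $(-\infty,0)$ is constant; these are the same argument in different clothing, and your base case near $\gamma=0^-$ (eigenvalue $-2$ from \eqref{ineq1}, $\Pi(\gamma)<0$ from Proposition \ref{signeigen}, nothing else negative from Proposition \ref{perteigen}) matches the paper's. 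The one genuine divergence is item $(iii)$ for $\gamma<0$: the paper deduces it from item $(i)$, the even/odd dichotomy for eigenfunctions, and the Sturm oscillation result of Lemma \ref{Oscill} (in effect: the second eigenfunction has exactly one zero, and an even eigenfunction with a single zero would have to vanish at the origin together with its derivative by the jump condition, hence vanish identically, so the second eigenfunction is odd for \emph{every} $\gamma<0$), while you determine the parity of the second eigenfunction only perturbatively near $\gamma=0$ (odd because $\Omega(0)=\varphi'_{\omega,0}$ is odd and parity is preserved along a continuous path of simple eigenvectors) and then rerun the continuation argument inside the even sector, which is legitimate since the restricted family is again analytic of type (B), bounded below by $-3$, and has trivial kernel for $\gamma\neq0$. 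Both routes are sound; the paper's is shorter once Lemma \ref{Oscill} is in hand, yours trades oscillation theory for a second application of the Riesz-projection continuation.
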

\begin{proof} Recall that  $\ker(\mathcal L^\gamma_{1})=\{0\}$ for $\gamma\neq 0$. Let $\gamma<0$ and define $\gamma_\infty$ by
$$
\gamma_\infty=\inf \{r<0:  \mathcal L^\gamma_{1}\;{\text{has exactly two negative eigenvalues for all}}\; \gamma \in (r,0)\}.
$$
 Proposition \ref{signeigen} implies that $\gamma_\infty$ is well defined and $\gamma_\infty\in [-\infty,0)$. We claim that $\gamma_\infty=-\infty$. Suppose that $\gamma_\infty> -\infty$. Let $N=n(\mathcal L^{\gamma_{\infty}}_{1})$ and $\Gamma$  be a closed curve (for example, a circle or a rectangle) such that $0\in \Gamma\subset \rho(\mathcal L^{\gamma_{\infty}}_{1})$, and  all the negative eigenvalues of  $\mathcal L^{\gamma_{\infty}}_{1}$ belong to the inner domain of $\Gamma$.  The existence of such $\Gamma$ can be  deduced from the lower semi-boundedness of the quadratic form associated to $\mathcal L^{\gamma_{\infty}}_{1}$. Indeed, for $f\in \dom(\mathcal L^{\gamma_{\infty}}_{1})$
$$
(\mathcal L^{\gamma_{\infty}}_{1} f, f)=\int\limits_{\mathbb{R}}((f')^2 +V^1_{\gamma_{\infty}} f^2)dx -\gamma|f(0)|^2\geq -3\|f\|_2^2
$$
since $V^1_{\gamma_{\infty}}(x)\geq -3$ for all $x$.

Next, from Lemma \ref{analici} it  follows  that there is  $\epsilon>0$ such that for $\gamma\in [\gamma_{\infty}-\epsilon, \gamma_{\infty}+\epsilon]$ we have $\Gamma\subset \rho(\mathcal L^\gamma_{1})$ and for $\xi \in \Gamma$,
$\gamma\to ( \mathcal L^\gamma_{1}-\xi)^{-1}$ is analytic. Therefore, the existence of an analytic family of Riesz-projections $\gamma\to P(\gamma)$  given by 
$$
P(\gamma)=-\frac{1}{2\pi i}\int\limits_{\Gamma} (\mathcal L^{\gamma}_{1}-\xi)^{-1}d\xi
$$
implies  that $\dim(\ran P(\gamma))=\dim(\ran P(\gamma_\infty))=N$ for all $\gamma\in [\gamma_{\infty}-\epsilon, \gamma_{\infty}+\epsilon]$. Next, by definition of $\gamma_{\infty}$, there exists $r_0\in (\gamma_\infty, \gamma_\infty+\epsilon)$, and $ \mathcal L^\gamma_{1}$ has exactly two negative eigenvalues for all $\gamma\in (r_0,0)$. Therefore, $\mathcal L^{\gamma_\infty+\epsilon}_{1} $ has two negative eigenvalues and $N=2$, hence $\mathcal L^\gamma_{1}$ has two negative eigenvalues for $\gamma\in (\gamma_{\infty}-\epsilon, 0)$,  which contradicts with the definition of $\gamma_{\infty}$. Therefore,  $\gamma_{\infty}=-\infty$. Similar analysis can be  applied to the case $\gamma>0$. The last assertion was proved for $\gamma>0$ in  Proposition \ref{num}. In the case $\gamma<0$ the statement follows from  item $(i)$, the fact that any eigenfunction of $\mathcal L^{\gamma}_1$ is either even or odd, and  the Sturm oscillation result in Lemma \ref{Oscill}. 
\end{proof}

\begin{remark} We note that the curve $\Gamma$ above  can be chosen independently of the parameter $\gamma \in \mathbb R$. Indeed, the relation $V_1^{\gamma}(x)\geq -3$ for any $\gamma$ implies $\inf\sigma(\mathcal L^\gamma_1)\geq -3$.  Thus, $\Gamma$ can be chosen as the rectangle $\Gamma=\partial R$, in which $$
R=\{ z\in\mathbb C: z=z_1+iz_2, (z_1,z_2)\in [-4,0]\times [-a,a],\;\text{for some}\;a>0\}.$$
\end{remark}
\vskip0.2in
\noindent\textit{Proof of Theorem \ref{main}}
\begin{itemize} 
\item[$(i)$] Let $\gamma>0$ and  $E: \widetilde W\longrightarrow\mathbb{R}$ be the energy functional defined by \eqref{energy}. From  \cite[Lemma 2.6]{Caz83} (with $-\Delta$ substituted by $\mathcal H^\gamma_\delta$) and the  continuous embedding $\widetilde{W}\hookrightarrow W(\mathbb{R})$ we deduce that   $\mathcal E''(\varphi_{\omega,\gamma})\in B(\widetilde{W}, \widetilde{W}')$, where $\mathcal E''(\varphi_{\omega,\gamma})$  is the operator associated with the  form 
$E''(\varphi_{\omega,\gamma})(u,v)$. Using  Proposition \ref{num}, positivity of $\partial_\omega||\varphi_{\omega, \gamma}||^2_{2}$, Remark \ref{morse}, and Theorem \ref{ass3} we arrive   at  item $(i)$  in Theorem \ref{st0} which induces  the orbital stability of $e^{i\omega t}\varphi_{\omega,\gamma}$ in $\widetilde{W}$.

\item[$(ii)$] Let $\gamma<0$. From item $(i)$ of Proposition \ref{spec6a}, the positivity  of $\partial_\omega||\varphi_{\omega, \gamma}||^2_{2}$, and Theorem \ref{ass3},  we get   item $(ii)$ of Theorem  \ref{st0} which implies the instability of $e^{i\omega t}\varphi_{\omega,\gamma}$ in $\widetilde{W}$.
\item[$(iii)$] Stability of  $e^{i\omega t}\varphi_{\omega,\gamma}$ in $\widetilde{W}_{\rad}$ follows from item $(iii)$ of Propositon \ref{spec6a} and item $(i)$  in Theorem \ref{st0}.
\end{itemize}
\hfill
$\square$

\section*{Appendix}

In this Appendix we show the uniqueness of the peak-standing wave solution $\varphi_{\omega, \gamma}$ stated in Theorem \ref{uniq}. The  proof is based on the ideas from \cite{Ard16, BM, FJ, Vaz84}.

\noindent\textit{Proof of Theorem \ref{uniq}.} We divide the proof in 3 steps. Let $\varphi$ be a solution to \eqref{eq2}.
\begin{enumerate}
\item[1)] We show initially that if $\varphi\in H^2(\mathbb{R}_+)$ is a solution to
\begin{equation}\label{prof1}
-\varphi''+\omega\varphi-\varphi\log|\varphi|^2=0
\end{equation}
on $\mathbb{R}_+$, then  $\varphi=e^{i\theta_+}e^{\tfrac{\omega+1}{2}}e^{-\tfrac{(x-x_+)^2}{2}}$, where $\theta_+, x_+\in\mathbb{R}$. Indeed,  writing  $\varphi(x)= e^{i\theta(x)}\rho(x)$, where $\theta$ and $\rho$ are  real-valued functions, we obtain from equation \eqref{prof1}
  $$ 
  -\rho''+\rho(\omega+(\theta')^2)-\rho\log\rho^2+i(\theta''\rho+2\theta'\rho')=0.
 $$
 Thus, in order to make the
imaginary part vanish, we get $\theta''\rho+2\theta'\rho'=0$, which implies $\rho^2\theta'\equiv const:=C$. Next, since 
$$
|\varphi'|^2=(\rho')^2+(\theta')^2\rho^2\geq (\theta')^2\rho^2\geq 0
$$
 and $\underset{x\rightarrow\infty}\lim|\varphi'|=0$, we get 
$\underset{x\rightarrow\infty}\lim (\theta')^2\rho^2=C\underset{x\rightarrow\infty}\lim \theta'=0$. 
Therefore, $\underset{x\rightarrow\infty}\lim \theta'$ exists. Now, since $|\varphi|=\rho$, we obtain  $\underset{x\rightarrow\infty}\lim \rho^2=0$, and thus  $C=0$,  which implies  $\theta(x)\equiv const:=\theta_+$.
Thereby, $\varphi(x)= e^{i\theta_+}\rho(x)$, where $\rho$ satisfies 
\begin{equation}\label{prof2}
 -\rho''+\omega\rho-\rho\log\rho^2=0,\quad x\in\mathbb{R}_+.
\end{equation}
From \cite[Theorem 1]{Vaz84} it follows that $\rho(x)>0$.

\item[2)] Multiplying \eqref{prof2} by $\rho'$ and integrating we arrive at 
\begin{equation}\label{prof3}
(\rho')^2=(1+\omega)\rho^2-\rho^2\ln|\rho|^2+K.
\end{equation}
Since $\rho\in H^2(\mathbb{R}_+)$, we get $K=0$. Therefore, integrating \eqref{prof3} we obtain 
$$
\rho(x)=e^{\tfrac{\omega+1}{2}}e^{-\tfrac{(x-x_+)^2}{2}},\qquad x_+\in\mathbb{R}.
$$ 
Thus, we get $\varphi=e^{i\theta_+}e^{\tfrac{\omega+1}{2}}e^{-\tfrac{(x-x_+)^2}{2}}$ on $\mathbb{R}_+$.
Analogously, we can show that the $H^2(\mathbb{R}_-)$-solution of \eqref{prof1} on $\mathbb{R}_-$ is given by 
$$\varphi=e^{i\theta_-}e^{\tfrac{\omega+1}{2}}e^{-\tfrac{(x-x_-)^2}{2}},\qquad\theta_-, x_-\in\mathbb{R}.
$$ 

\item[3)]  From items 1)-2) above we obtain that the solution to \eqref{prof1} on $\mathbb R\setminus\{0\}$ is given by 
$$
\varphi=\begin{cases}
\begin{aligned}
& e^{i\theta_+}e^{\tfrac{\omega+1}{2}}e^{-\tfrac{(x-x_+)^2}{2}},\,\,x>0,\\
& e^{i\theta_-}e^{\tfrac{\omega+1}{2}}e^{-\tfrac{(x-x_-)^2}{2}},\,\, x<0.
\end{aligned}
\end{cases}
$$
Next,  our aim is to find explicitly $x_\pm$ and $\theta_\pm$. Let $f(s) = -\omega s+s \log(s^2)$ and   $F(s) =\int\limits_0^s f(t)dt$.
Multiplying \eqref{prof1} by $\varphi'$ and integrating from $0$  to $R$, we get as $R\rightarrow\infty$
$$\frac{1}{2}(\varphi'(0+))^2 + F(\varphi(0+)) = 0. $$
Similarly, we obtain 
$$\frac{1}{2}(\varphi'(0-))^2 + F(\varphi(0-)) = 0. $$
Since $\varphi$  need to be continuous at $x=0$, we get $|\varphi'(0-)|=|\varphi'(0+)|$. Therefore, $|x_-|=|x_+|$ and again by continuity  condition we obtain $\theta_-=\theta_+=:\theta$. To conclude the proof we need to recall that $\varphi$ satisfies jump condition  $\varphi'(0+)-\varphi'(0-)=-\gamma\varphi(0)$, which yields  $x_+=-\frac{\gamma}{2}$ and $x_-=\frac{\gamma}{2}$. 
\end{enumerate}
\hfill
$\square$

\end{document}